\definecolor{lightgrey}{gray}{0.9}
\newcommand{\expect}[1]{\mathbb{E} \!\left [ #1\right]}
\newcommand{\abs}[1]{\left\lvert #1 \right\rvert}
\newcommand{\ceil}[1]{\left\lceil #1 \right\rceil}
\newcommand{\ora}{\overrightarrow}
\newcommand{\ola}{\overleftarrow}
\newcommand{\innermid}{\nonscript\;\delimsize\vert\nonscript\;}
\newcommand{\activatebar}{%
  \begingroup\lccode`\~=`\|
  \lowercase{\endgroup\let~}\innermid 
  \mathcode`|=\string"8000
}
\newcommand{\VV}{\mathbb{V}}
\newcommand{\Sphere}{\mathbb{S}}
\newcommand{\N}{\mathbb{N}}
\newcommand{\R}{\mathbb{R}}
\newcommand{\Z}{\mathbb{Z}}
\newcommand{\PP}{\mathbb{P}}
\newcommand{\NN}{\mathcal{N}}
\newcommand{\bcode}{\mathcal{B}}
\newcommand{\Linfty}{L^{\infty}}
\newcommand{\pvar}{p-\text{var}}
\newcommand{\suchthat}{\;\ifnum\currentgrouptype=16 \middle\fi|\;}
\DeclareMathOperator\id{id}
\DeclareMathOperator\Vol{Vol}
\DeclareMathOperator\PH{PH}
\DeclareMathOperator\Pers{Pers}
\DeclareMathOperator\Homeo{Homeo}
\DeclareMathOperator\Lip{Lip}
\DeclareMathOperator\Image{Im}
\DeclareMathOperator\rk{rank}
\DeclareMathOperator\Dgm{Dgm}
\DeclareMathOperator\Alg{Alg}
\DeclareMathOperator\OT{OT}
\DeclareMathOperator\updim{\overline{dim}}
\DeclareMathOperator\downdim{\underline{dim}}
\newcommand{\tensor}{\otimes}
\newcommand{\ds}{\oplus}
\newcommand{\DS}{\bigoplus}
\newcommand{\al}{\alpha}
\newcommand{\veps}{\varepsilon}
\newcommand{\vp}{\varphi}
\newcommand{\bd}{\begin{displaymath}
\begin{tikzcd}}
\newcommand{\ed}{\end{tikzcd}
\end{displaymath}}
\newcommand{\bmat}{\begin{pmatrix}}
\newcommand{\emat}{\end{pmatrix}}
\newcommand{\be}{\begin{equation}}
\newcommand{\ee}{\end{equation}}
\newcommand{\btikz}{\begin{tikzcd}}
\newcommand{\etikz}{\end{tikzcd}}
\newcommand{\bea}{\begin{eqnarray}}
\newcommand{\eea}{\end{eqnarray}}
\newcommand{\bse}{\begin{subequations}}
\newcommand{\ese}{\end{subequations}}
\newcommand{\bc}{\begin{center}}
\newcommand{\ec}{\end{center}}
\newcommand{\nonum}{\nonumber}
\newcommand{\half}{\frac{1}{2}}
\newcommand{\XX}{\mathcal{X}}
\DeclareMathOperator*{\argmax}{arg\,max}
\newcommand{\norm}[1]{\left\lVert#1\right\rVert}
\newcommand{\del}{\partial}
\newcommand{\xx}{{\bf{x}}}
\newcommand{\DD}{\mathcal{D}}
\newcommand{\comment}[1]{}
\newcommand{\ie}{{\it i.e. }}
\newcommand{\cf}{{\it cf. }}
\newcommand{\eg}{{\it e.g. }}
\newcommand{\Ham}{{\mathcal{H}}}
\newcommand{\Lag}{{\mathcal{L}}}
\newcommand{\Mel}{{\mathcal{M}}}
\def\blob[#1]{~\parbox{#1mm}{
\begin{fmfgraph*}(#1,#1)
\fmfleft{i1}
\fmfright{o1}
\fmf{phantom}{i1,v1,o1}
\fmfblob{0.4w}{v1}
\end{fmfgraph*}}~}
\def\vertex[#1]{~\parbox{#1mm}{
  \begin{fmfgraph*}(#1,#1)
    \fmfleft{i1, i2}
    \fmfright{o1,o2}
    \fmf{phantom}{i1,i2,o2,o1,i1}
    \fmf{plain}{i1,v,i2}
    \fmf{plain}{o1,v,o2}
    \fmfforce{nw}{i1}
    \fmfforce{sw}{i2}
    \fmfforce{se}{o1}
    \fmfforce{ne}{o2}
    \fmfforce{c}{v}
    \fmfdot{v}
  \end{fmfgraph*}
  }~}
\def\othervertex[#1]{~\parbox{#1mm}{
  \begin{fmfgraph*}(#1,#1)
    \fmfleft{i1, i2}
    \fmfright{o1,o2}
    \fmf{phantom}{i1,i2,o2,o1,i1}
    \fmf{plain}{i1,v}
    \fmf{plain}{i2,v}
    \fmf{plain}{o1,v}
    \fmf{plain}{v,o2}
    \fmfforce{nw}{i1}
    \fmfforce{sw}{i2}
    \fmfforce{se}{o1}
    \fmfforce{ne}{o2}
    \fmfforce{c}{v}
    \fmfblob{0.3w}{v}
  \end{fmfgraph*}
  }~}
\def\bigvertex[#1]{~\parbox{#1mm}{
  \begin{fmfgraph*}(#1,#1)
    \fmfleft{i1, i2}
    \fmfright{o1,o2}
    \fmf{phantom}{i1,i2,o2,o1,i1}
    \fmf{plain}{i1,v,i2}
    \fmf{plain}{o1,v,o2}
    \fmfforce{nw}{i1}
    \fmfforce{sw}{i2}
    \fmfforce{se}{o1}
    \fmfforce{ne}{o2}
    \fmfforce{c}{v}
    \fmfdot{v}
  \end{fmfgraph*}
  }~}
\def\edge[#1]{~\parbox{#1mm}{
  \begin{fmfgraph*}(#1,#1)
    \fmfleft{i}
    \fmfright{o}
    \fmf{plain,l.s=left}{i,o}
  \end{fmfgraph*}
}~}
\def\curlyedge[#1]{~\parbox{#1mm}{
  \begin{fmfgraph*}(#1,#1)
    \fmfleft{i}
    \fmfright{o}
    \fmf{curly,l.s=left}{i,o}
  \end{fmfgraph*}
}~}
\def\wavyedge[#1]{~\parbox{#1mm}{
  \begin{fmfgraph*}(#1,#1)
    \fmfleft{i}
    \fmfright{o}
    \fmf{wiggly,l.s=left}{i,o}
  \end{fmfgraph*}
}~}
\def\tadpole[#1]{~\parbox{#1mm}{
 	\begin{fmfgraph*}(#1,#1)
 		\fmfleft{i1}
 		\fmfright{o1}
		\fmf{plain}{i1,v1,v1,o1}
 		\fmfdot{v1}
 	\end{fmfgraph*}}~}
\def\amputatedtadpole[#1]{\begin{fmfgraph*}(#1,2)
		\fmfleft{i1}
		\fmfright{o1}
		\fmf{phantom}{i1,v1,o1}
		\fmf{plain}{v1,v1}
		\fmfdot{v1}
	\end{fmfgraph*}}
\def\amputatedwigglytadpole[#1]{\begin{fmfgraph*}(#1,2)
		\fmfleft{i1}
		\fmfright{o1}
		\fmf{phantom}{i1,v1,o1}
		\fmf{wiggly}{v1,v1}
		\fmfdot{v1}
	\end{fmfgraph*}}
\def\amputatedcurlytadpole[#1]{\begin{fmfgraph*}(#1,2)
		\fmfleft{i1}
		\fmfright{o1}
		\fmf{phantom}{i1,v1,o1}
		\fmf{curly}{v1,v1}
		\fmfdot{v1}
	\end{fmfgraph*}}
\def\vacfirstord[#1]{~\parbox{#1mm}{
 	\begin{fmfgraph*}(#1,#1)
		\fmfleft{i1,i2}
		\fmfright{o1,o2}
		\fmf{plain,left}{v2,v1}
		\fmf{plain,left}{v1,v2}
		\fmf{plain,left}{v3,v1}
		\fmf{plain,left}{v1,v3}
		\fmf{phantom}{v2,i1}
		\fmf{phantom}{v3,o1}
		\fmfforce{(0,0)}{i1}
		\fmfforce{(w,0)}{o1}
		\fmfforce{(0.5w,0.5h)}{v1}
		\fmfforce{(0,0.5h)}{v2}
		\fmfforce{(w,0.5h)}{v3}
		\fmfforce{nw}{i2}
		\fmfforce{ne}{o2}
		\fmfforce{(0,0.5h)}{i2}
		\fmfforce{(w,0.5h)}{o2}
		\fmfdot{v1}
		\end{fmfgraph*}}~}
\def\doubletadpolehor[#1]{~\parbox{#1mm}{
\begin{fmfgraph}(#1,#1)
	\fmfleft{i1}
	\fmfright{o1}
	\fmf{plain}{i1,v1,v1,v2,v2,o1}
	\fmfdot{v1,v2}
\end{fmfgraph}
}~}
\def\tripletadpolehor[#1]{~\parbox{#1mm}{
\begin{fmfgraph}(#1,#1)
	\fmfleft{i1}
	\fmfright{o1}
	\fmf{plain}{i1,v1,v1,v2,v2,v3,v3,o1}
	\fmfdot{v1,v2,v3}
\end{fmfgraph}
}~}
\def\doubletadpolever[#1]{~\parbox{#1mm}{
\begin{fmfgraph}(#1,#1)
	\fmfleft{i1}
	\fmfright{o1}
	\fmf{plain}{i1,v1}
	\fmf{plain,left}{v1,v2}
	\fmf{plain,left}{v2,v1}
	\fmf{plain}{v1,o1}
	\fmffreeze
	\fmf{plain,left}{v2,v3,v2}
	\fmfforce{c}{v2}
	\fmfforce{(0.5w,h)}{v3}
	\fmfforce{(0.5w,0)}{v1}
	\fmfforce{sw}{i1}
	\fmfforce{se}{o1}
	\fmfdot{v1,v2}
\end{fmfgraph}
}~}
\def\amputateddoubletadpolever[#1]{~\parbox{#1mm}{
	\begin{fmfgraph}(#1,#1)
	\fmfleft{i1}
	\fmfright{o1}
	\fmf{phantom}{i1,v1}
	\fmf{plain,left}{v1,v2}
	\fmf{plain,left}{v2,v1}
	\fmf{phantom}{v1,o1}
	\fmffreeze
	\fmf{plain,left}{v2,v3,v2}
	\fmfforce{c}{v2}
	\fmfforce{(0.5w,h)}{v3}
	\fmfforce{(0.5w,0)}{v1}
	\fmfforce{sw}{i1}
	\fmfforce{se}{o1}
	\fmfdot{v1,v2}
	\end{fmfgraph}~
}
}
\def\sunset[#1]{\parbox{#1mm}{
\begin{fmfgraph}(#1,#1)
\fmfleft{i}
\fmfright{o}
\fmfforce{0,0.5h}{i}
\fmfforce{w,0.5h}{o}
\fmf{plain,tension=5}{i,v1}
\fmf{plain,tension=5}{v2,o}
\fmf{plain,left,tension=0.5}{v1,v2,v1}
\fmf{plain}{v1,v2}
\fmfdot{v1,v2}
\end{fmfgraph}
}}
\def\amputatedsunset[#1]{\parbox{#1mm}{
\begin{fmfgraph}(#1,#1)
\fmfleft{i}
\fmfright{o}
\fmf{phantom,tension=5}{i,v1}
\fmf{phantom,tension=5}{v2,o}
\fmf{plain,left,tension=0.5}{v1,v2,v1}
\fmf{plain}{v1,v2}
\fmfdot{v1,v2}
\end{fmfgraph}
}}
\def\fourpointsecondorder[#1]{~\parbox{#1mm}{
	\begin{fmfgraph}(15,#1)
	\fmfleft{i1,i2}
	\fmfright{o1,o2}
	\fmf{plain}{i1,v1}
	\fmf{plain}{i2,v1}
	\fmf{plain,right}{v1,v2}
	\fmf{plain,left}{v1,v2}
	\fmf{plain}{v2,o1}
	\fmf{plain}{v2,o2}
	\fmfdot{v1,v2}
	\end{fmfgraph}}~
}
\def\fourpointsecondordertwo[#1]{~\parbox{#1mm}{
	\begin{fmfgraph}(15,#1)
	\fmfleft{i1,i2,i3}
	\fmfright{o1}
	\fmf{plain}{i1,v1}
	\fmf{plain}{i2,v1}
	\fmf{plain}{i3,v1}
	\fmf{plain}{v1,v2,v2,o1}
	\fmfdot{v1,v2}
	\end{fmfgraph}
}~}
\newtheorem{theorem}{Theorem}[section]
\theoremstyle{definition}
\newtheorem{definition}[theorem]{Definition}
\newtheorem{notation}[theorem]{Notation}
\newtheorem{lemma}[theorem]{Lemma}
\newtheorem{proposition}[theorem]{Proposition}
\theoremstyle{remark}
\newtheorem{remark}[theorem]{Remark}
\theoremstyle{example}
\newtheorem{corollary}[theorem]{Corollary}
\title{On $C^0$-persistent homology and trees}
\author[1,2,3]{Daniel Perez\thanks{Email: \texttt{daniel.perez@ens.fr}}}
\affil[1]{\footnotesize D\'epartement de math\'ematiques et applications, \'Ecole normale sup\'erieure, CNRS, PSL University, 75005 Paris, France}
\affil[2]{\footnotesize Laboratoire de math\'ematiques d'Orsay, Universit\'e Paris-Saclay, CNRS, 91405 Orsay, France}
\affil[3]{\footnotesize DataShape, Centre Inria Saclay, 91120 Palaiseau, France}
\date{\today}
\begin{document}
\maketitle

\begin{abstract}
In this paper we give a metric construction of a tree which correctly identifies connected components of superlevel sets of continuous functions $f:X\to \R$ and show that it is possible to retrieve the $H_0$-persistent diagram from this tree. We revisit the notion of homological dimension previously introduced by Schweinhart and give some bounds for the latter in terms of the upper-box dimension of $X$, thereby partially answering a question of the same author. We prove a quantitative version of the Wasserstein stability theorem valid for regular enough $X$ and $\al$-H\"older functions and discuss some applications of this theory to random fields and the topology of their superlevel sets. 
\end{abstract}
\tableofcontents

\section{Introduction}

\subsection{State of the art}
The topology of superlevel sets of a function has been a widespread subject of study in different mathematical communities. In the probability theory community, the introduction of trees has key in the understanding of connected components of superlevel sets of random functions on $[0,1]$ \cite{LeGall:Trees,Picard:Trees,curien2013,LeGallDuquesne:LevyTrees}. This approach allows us to define trees associated to (arbitrarily irregular, but continuous) functions.

More recently, so-called \textit{merge trees} have made their apparition amongst the topological data analysis (TDA) commmunity (\cf the books by Chazal \textit{et al.} \cite{Chazal:Persistence} and Oudot's book \cite{Oudot:Persistence} for an introduction to TDA). As in the probabilistic case, these merge trees carry important information about the connected components of superlevel sets and moreover about the persistence diagram of a function $f$ defined on a compact space $X$ \cite{Curry_2018,Curry_2021,Wang_2014,Memoli_2020,Munch_2019}, which is now required to be a Morse function (an explicit construction and correspondence between trees and barcodes can be found in \cite{Curry_2018}). 

The construction of these trees are different between both communities: the approach of the probabilists is analytic \cite{LeGall:Trees,curien2013}, whereas the merge trees can be seen as an algebraic construction \cite{Curry_2018,Munch_2019,Curry_2021}. Since these trees capture essentially the same information about the connected components of superlevel sets, one can ask whether both constructions coincide where their regimes of validity intersect. We will show in this paper that they do and that it is possible to retrieve the $H_0$-persistence diagram of $f$ from the constructed tree (constructed from through the probabilistic approach). 

Parallel to this development, Wasserstein $p$ distances on the space of diagrams (denoted $d_p$) \cite[Chapter VIII.2]{HarerEdelsbrunner:CT} have been widely used and studied by the TDA community in different contexts \cite{Carriere_2017, Divol_2019,Turner_2014,LipschitzStableLpPers,Mileyko_2011}. Recently, Wasserstein distances have been formalized through the use of optimal \textit{partial} transport by Divol and Lacombe \cite{Divol_2019}. In this approach, we look at persistence diagrams as measures, a point of view which had been previously been introduced \cite{Oudot:Persistence,Chazal:Persistence} and has proved fruitful independently from these considerations. The framework introduced by Divol and Lacombe extends the notion of Wasserstein $p$ distance previously defined on persistence diagrams to arbitrary Radon measures on the upper-half plane $\XX$ where persistence diagrams are defined. 

The extension to all Radon measures comes with certain advantages, such as having an easily definable and computable notion of ``average diagram'', defined by duality. This notion was originally introduced by Chazal and Divol in \cite{ChazalDivol:Brownian} as follows. If $f$ is a random function, seeing $\Dgm(f)$ as a measure, it is possible to define the average diagram of the process by duality in the following way. For every measurable set $B \subset \XX$, 
\be
\expect{\Dgm(f)}(B) := \expect{\Dgm(f)(B)} \,.
\ee
From the definition, $\expect{\Dgm(f)}$ encodes every linear functional of the diagram and is easily computed, motivating its introduction. Note this definition contrasts the Fréchet means approach of other authors (\eg Turner \textit{et al.} \cite{Turner_2014}), which is non-linear, depends on $p$ and requires a proof of existence and unicity, but does not require the extension of the space of persistence diagrams to the space of arbitrary measures on $\XX$.

This dual approach of Chazal and Divol inscribes itself in a more general interest in the persistence diagrams of stochastic processes, which have been studied by a wide variety of authors, for instance \cite{AdlerTaylor:RandomFields,Baryshnikov_2019,Adler_2010,ChazalDivol:Brownian,Chazal_2015,Chazal_2014,Perez_Pr_2020, Turner_2014,Adams_2020}. Some of the previously cited results discuss different aspects of random field persistence theory, which include, but are not limited to computations for canonical processes \cite{Baryshnikov_2019,Perez_Pr_2020}, stability of certain linear functionals with respect to the bottleneck distance \cite{Chazal_2015}, the Euler characteristic \cite{Adler_2010}, random complexes \cite{Adler_2010} and notions of central tendency \cite{ChazalDivol:Brownian, Turner_2014}.

Given the widespread use of Wasserstein $p$ distances, it is important to understand whether this notion is continuous (and the nature of this continuity) with respect to perturbations at the level of the filtrating functions on the space $X$. This so-called ``Wasserstein stability'' of persistence diagrams of functions $f: X \to \R$ has been widely discussed by the TDA community, in the context where the space $X$ is triangulable. There are many results in this direction \cite{Chen_2011,Skraba_2020,LipschitzStableLpPers}, valid with different degrees of generality, covering both $X$ compact \cite{LipschitzStableLpPers,Skraba_2020} and $X$ non-compact \cite{Chen_2011}, but mainly focusing mainly on Lipschitz functions (note, however, that the work of Chen and Edelsbrunner \cite{Chen_2011} does not require the Lipschitz condition). The first result in this direction was obtained by Cohen-Steiner \textit{et al.} \cite{LipschitzStableLpPers} and depends on the following restriction on $X$.
\begin{definition}{\cite{LipschitzStableLpPers}}
A (triangulable) metric space \textbf{$X$ implies bounded $q$-total persistence} if, for all $k \in \N$, there exists a constant $C_X$ that depends only on $X$ such that
\be
\Pers_q^q(\Dgm_k(f)) < C_X 
\ee
for every tame function $f$ with Lipschitz constant $\Lip(f) \leq 1$.
\end{definition}
The $\Pers_p$-functional of the definition above is the usual $p$-persistence used in TDA (a non-exhaustive list of uses of this functional includes \cite{Carriere_2017,Adams_2017,Divol_2019,Turner_2014,Mileyko_2011}), defined as the $\ell^p$-norm of the length of the bars of the barcode of $f$. 
The results obtained thereafter rely heavily on this condition, which is not rendered quantitative (in particular, given $X$, no upper bound for $C_X$ or lower bound for $q$ were known in general). Nonetheless, this condition allowed the authors to show Wasserstein stability,
\begin{theorem}[Cohen-Steiner, Edelsbrunner, Harer, \cite{LipschitzStableLpPers}]
Let $X$ be a triangulable space implying bounded $q$-total persistence and let $f$ and $g$ be two $\R$-valued Lipschitz functions on $X$. Then, for all $p>q$, we have
\be
d_p(\Dgm(f),\Dgm(g)) \leq C_X (\Lip(f)^q \vee \Lip(g)^q) \norm{f-g}_\infty^{1-\frac{q}{p}} \,,
\ee
where $\Lip(f)$ denotes the Lipschitz constant of $f$.
\end{theorem}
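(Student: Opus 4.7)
The plan is to prove this result by interpolation between the bottleneck stability theorem and a uniform bound on the total $q$-persistence. The starting point is the bottleneck stability theorem (whose proof precedes this one in \cite{LipschitzStableLpPers}): for Lipschitz functions $f,g$ on a triangulable $X$, there exists a partial matching $\gamma:\Dgm(f)\cup\Delta\to\Dgm(g)\cup\Delta$ such that every edge $e=(x,\gamma(x))$ satisfies $\norm{e}_\infty\le\delta:=\norm{f-g}_\infty$, where $\Delta$ denotes the diagonal.

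From the definition of $d_p$ as an infimum of $\ell^p$-matching costs, the key interpolation step is
\begin{equation}
d_p^p\bigl(\Dgm(f),\Dgm(g)\bigr) \;\le\; \sum_{e\in\gamma}\norm{e}_\infty^p \;=\; \sum_{e\in\gamma}\norm{e}_\infty^{p-q}\,\norm{e}_\infty^q \;\le\; \delta^{p-q}\sum_{e\in\gamma}\norm{e}_\infty^q,
\end{equation}
which is valid precisely because every edge length is bounded by $\delta$. It therefore suffices to control $\sum_{e\in\gamma}\norm{e}_\infty^q$ in terms of the total $q$-persistences of $\Dgm(f)$ and $\Dgm(g)$.

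To obtain that control, I would refine $\gamma$ by a swap argument. Whenever $(x,y)\in\gamma$ joins two off-diagonal points with $d(x,\Delta),d(y,\Delta)\le\delta$, I replace $(x,y)$ by the two diagonal edges $(x,\pi_\Delta(x))$ and $(\pi_\Delta(y),y)$; each new edge has length at most $\delta$, so the bottleneck cost is preserved and the interpolation inequality above still applies. After exhausting these swaps, every remaining off-diagonal to off-diagonal edge satisfies $\max(d(x,\Delta),d(y,\Delta))>\delta\ge\norm{x-y}_\infty$, and therefore $\norm{x-y}_\infty^q\le d(x,\Delta)^q+d(y,\Delta)^q$. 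A careful accounting, attributing each edge's $q$-cost to the distance(s) of its off-diagonal endpoint(s) to $\Delta$, then yields
\begin{equation}
\sum_{e\in\gamma}\norm{e}_\infty^q \;\le\; \sum_{x\in\Dgm(f)}d(x,\Delta)^q+\sum_{y\in\Dgm(g)}d(y,\Delta)^q \;=\; 2^{-q}\bigl(\Pers_q^q(\Dgm(f))+\Pers_q^q(\Dgm(g))\bigr).
\end{equation}

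The conclusion then follows by invoking the hypothesis that $X$ implies bounded $q$-total persistence: writing $L=\max(\Lip(f),\Lip(g))$, the rescaled functions $f/L,g/L$ are $1$-Lipschitz, so $\Pers_q^q(\Dgm(f))$ and $\Pers_q^q(\Dgm(g))$ are both bounded by $C_X L^q$. Substituting back and taking $p$-th roots gives the stated inequality, with $C_X$ absorbing the numerical constants $2^{1-q}$ and the $1/p$-th power. The main difficulty lies in the swap argument and the ensuing bookkeeping: one must ensure that the swaps can be carried out consistently on (potentially infinite) diagrams without double-counting and without increasing the bottleneck cost beyond $\delta$, since the whole interpolation collapses if even a single edge length exceeds $\delta$.
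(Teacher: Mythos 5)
Your proposal is correct and follows essentially the same route as the paper's own treatment of Wasserstein stability (the proof of theorem \ref{thm:WassersteinpStabilityLLC}, which reproduces the argument of \cite{LipschitzStableLpPers}): take the bottleneck matching $\gamma$ with all edge lengths at most $\delta=\norm{f-g}_\infty$, interpolate via $\norm{e}_\infty^p\le\delta^{p-q}\norm{e}_\infty^q$, bound the total $q$-cost of the matching by $\Pers_q^q(\Dgm(f))+\Pers_q^q(\Dgm(g))$ up to a constant, and conclude by rescaling by the Lipschitz constants and invoking bounded $q$-total persistence. The only difference is that your swap refinement carefully justifies the conversion of matched-edge costs into persistences, a step the paper dispatches with the cruder edge-wise bound $d(b,\gamma(b))^q\le 2^q\bigl(d(b,\Delta)^q+d(\gamma(b),\Delta)^q\bigr)$, so your extra bookkeeping is harmless (and in fact tightens that step) rather than a departure in method.
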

Further results in this direction, such as \cite{Skraba_2020}, also rely on the bounded $q$-total persistence condition, but give bounds lower bounds on admissible $q$, finding that $q\geq d$, where $d$ is the maximal dimension of simplices in the triangulation of $X$. It is also known that, for distance functions to point clouds in $\R^d$, $q=d$.

We will later see that the lower bound for the validity of Wasserstein stability is closely related to a different question regarding the link between the so-called \textit{homological dimensions} of $X$ and the upper-box dimension of $X$, which we will denote $\updim(X)$ (analogously, we will denote $\downdim(X)$ the lower-box dimension). To the best knowledge of the author, although Yuliy Baryshnikov and Shmuel Weinberger had previously obtained results in this direction (but never published them), this question was first opened and studied by Schweinhart and MacPherson \cite{MacPherson_2012} and later studied in more detail by Schweinhart in \cite{Schweinhart_2019}, but has also been addressed by other authors (\cf \cite{Adams_2020} and the references therein). 
\begin{definition}[Schweinhart's definition of $\PH_k$, \cite{Schweinhart_2019}]
Let $X$ be a bounded subset of a metric space. The $\PH_k$-dimension of $X$ is
\be
\dim_{\PH}^k(X):= \sup_{\xx} \inf_p \; \{\Pers_p(\Dgm_k(d(-,\xx))) <\infty\} \,,
\ee
where the supremum is taken over all finite sets of points $\xx$ of $X$.
\end{definition}
There are open problems stated in Schweinhart's paper regarding the relation between these notions of dimension and $\updim(X)$, some of which we will give a partial answer to in this paper. As we will later see, it is suitable to tweak this definition slightly.

\subsection{Our contribution}
\begin{itemize}
  \item Following the work of Le Gall and Curien \cite{LeGall:Trees,curien2013}, we define a tree constructed from a compact, connected and locally path connected space $X$ and a continuous function $f: X \to \R$ using a pseudo-distance on $X$ defined in terms of $f$ (section \ref{sec:H0treeconstruction}). We use this constructed tree to extend the work of Curry \cite{Curry_2018} previously valid under a Morse assumption to every continuous function. More precisely, we prove that it is possible to retrieve the barcode of the function from the constructed tree via an explicit algorithm (theorem \ref{thm:TfcalulatesH0}).
  \item We show that the map assigning a function $f : [0,1] \to \R$ to its constructed tree $T_f$ is a surjection onto the space of trees of finite upper-box dimension and provide an explicit construction of an inverse image (section \ref{sec:inverseproblem}).
  \item Following previous work by Picard \cite{Picard:Trees} and Schweinhart \cite{Schweinhart_2019}, we introduce the so-called \textit{persistence index of degree $k$}, $\Lag_k(f)$ of a function $f:X\to \R$ (definition \ref{def:PersIndex}).  For a regular enough metric spaces $X$, if $f$ is H\"older continuous (or H\"older continuous up to precomposition by a homeomorphism), we show an upper bound for $\Lag_0(f)$ in terms of $\updim(X)$ (lemma \ref{lemma:RegularityDimension}) and show that $\Lag_0(f) = \updim(T_f)$, where $T_f$ denotes the tree constructed from $f$ (theorem \ref{thm:Lfandupboxdim}). 
  \item We modify Schweinhart's definition for the $k$th degree homological dimension of $X$ (definition \ref{def:HomDim}) as 
  \be
  \dim^k_{\PH}(X) := \sup_{f \in \Lip_1(X)} \Lag_k(f) \,
  \ee
  and show that under for $X$ regular enough, we retrieve a well-known result by Kozma \textit{et al.} that $\dim^0_{\PH}(X)=\updim(X)$ \cite{Kozma_2006} and moreover that the supremum in this definition is attained generically (theorem \ref{thm:genericityofHomDim}). Moreover, $\dim_{\PH}^k(X)$ can be bounded above by $\updim(X)$, up to a factor which may depend on $k$ and the regularity of $X$ (theorem \ref{thm:genericityTotalHomology}). 
  \item We show that the supremum in the definition of $\dim^k_{\PH}$ could have been taken over any regularity class $C^\al(X,\R)$, up to a factor of $\al$ (theorems \ref{thm:genericityofHomDim} and \ref{thm:genericityTotalHomology}) and show a genericity result for the set of functions in $C^\al(X,\R)$ satisfying $\updim(X) = \Lag_0(f)$. Moreover, under more stringent conditions on $X$, we show the same genericity result holds in fact for $\updim(X) \leq \Lag_k(f)$ for integer $0\leq k <\updim(X)$ and show the equality case with some supplementary conditions on $X$ and in particular for compact Riemannian manifolds (theorem \ref{thm:SchweinhartAns}). In so doing, we answer a question by Schweinhart \cite{Schweinhart_2019} regarding bounds on homological dimensions and regularity conditions on $X$ for this bound to be sharp (section \ref{sec:Schweinhart}).
  \item Using the results relating to the bounds on the homological dimensions, we give a Wasserstein stability result valid for all degrees of \v{C}ech homology on regular enough metric spaces (which in particular include all compact smooth manifolds of convexity radius bounded below) (theorem \ref{thm:WassersteinpStabilityLLC}), for which explicit bounds on the constant $C_X$ are given and sharp bounds on the regime of validity of the theorem (corollary \ref{cor:BoundOnCX}). We show an annex result of stability for the trees constructed from the functions $f$ in terms of the Gromov-Hausdorff distance between the trees (theorem \ref{thm:diagstreesqiso}).
  \item Finally, we discuss some easy consequences of these results applied to the stochastic setting (section \ref{sec:StochasticProcesses}) and prove Chazal \textit{et al.}-like results \cite{Chazal_2014} for the $d_p$-stability of average diagrams of stochastic processes with an \textit{a priori} hypothesis of regularity (theorem \ref{thm:randomfielddiagStability}).
\end{itemize}

\section{Barcodes, diagrams and trees}

\subsection{Trees stemming from a continuous function}
\label{sec:H0treeconstruction}
Unless otherwise specified, throughout this section, let $X$ denote a connected, locally path-connected, compact topological space and let $f: X \to \R$ be a continuous function. Let us denote $(X_r)_{r \in \R}$ the filtration of $X$ by the \textbf{superlevels} of $f$, that is
\be
X_r := \{x \in X \, \vert \, f(x) \geq r\} \,.
\ee
\begin{notation}
We will denote the open superlevel sets by $X_{>r}$ whenever necessary and note $X_{r}^z$ the connected component of $X_r$ containing $z$.
\end{notation}
There exists a pseudo-distance on $X$, denoted $d_f$, given by:
\begin{definition}
Let $X$ and $f$ be defined as above. The \textbf{$H_0$-distance}, $d_f$, is the pseudo-distance
\be
d_f(x,y) := f(x) + f(y) - 2 \sup_{\gamma: x \mapsto y} \inf_{t \in [0,1]} f(\gamma(t))\,,
\ee 
where the supremum runs over every path $\gamma$ linking $x$ to $y$.
\label{def:explicitpseudodistance}
\end{definition}
\begin{remark}
Notice there are different ways of writing this distance. In particular, the sup above is also characterized by
\begin{align}
\sup_{\gamma: x \mapsto y} \inf_{t \in [0,1]} f(\gamma(t)) &= \sup \{r \, \vert \, [x]_{H_0(X_r)} = [y]_{H_0(X_r)}\} \\
&= \sup\{r \, \vert \, \exists \gamma \in C_1(X_r) \, \text{ such that }\, \del \gamma = x-y\}\,.
\end{align}
These equalities hold, since we take the coefficients of homology with respect to $\Z/2\Z$, so we can interpret 1-cycles as sums of paths on $X$.
\end{remark}
This pseudo-distance is a generalization of the distance introduced by Curien, Le Gall and Miermont in \cite{curien2013}. Note that $d_f$ has the following properties:
\begin{enumerate}
  \item \textbf{Identification of the connected components of superlevel sets}: $d_f(x,y) = 0$ if and only if there exists $t \in \R$ such that $x, y \in \{f=t\} \text{ and for every } \veps >0$, $x$ and $y$ lie in the same connected component of $X_{>t-\veps}$;
  \item \textbf{Compatibility with the filtration induced by $f$}: Let $x,y \in X$ and suppose that $f(x)<f(y)$, then if $[x]_{H_0(X_{f(x)})} = [y]_{H_0(X_{f(x)})}$, 
    \be
    d_f(x,y) := \abs{f(x)-f(y)} \,.
    \ee 
\end{enumerate}
The compatibility with the filtration induced by $f$ is immediate from the definition of $d_f$. It remains to show the two following propositions.
\begin{proposition}
The function $d_f : X^2 \to \R^+$ of definition \ref{def:explicitpseudodistance} is indeed a pseudo-distance.
\end{proposition}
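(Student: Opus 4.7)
The plan is to verify the three axioms of a pseudo-distance (vanishing on the diagonal, symmetry, and the triangle inequality), plus non-negativity, which is not automatic from the formula given.

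First, note that since $X$ is connected and locally path-connected, it is path-connected, so the set of paths $\gamma: x \mapsto y$ is non-empty for any $x,y \in X$, and the supremum in the definition is well-posed (with values in $[-\infty, \min(f(x),f(y))]$, as any path has $\inf_t f(\gamma(t)) \leq f(\gamma(0))$ and $\leq f(\gamma(1))$). This last observation immediately gives non-negativity: $\sup_\gamma \inf_t f(\gamma(t)) \leq \min(f(x),f(y)) \leq \tfrac{1}{2}(f(x)+f(y))$, so $d_f(x,y) \geq 0$. For vanishing on the diagonal, the constant path $\gamma \equiv x$ witnesses $\sup_\gamma \inf_t f(\gamma(t)) = f(x)$, so $d_f(x,x) = 0$. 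Symmetry follows by reparametrizing $\gamma$ in reverse: the set of paths from $x$ to $y$ is in bijection with the set of paths from $y$ to $x$ via $t \mapsto 1-t$, and $\inf_t f(\gamma(t))$ is invariant under this reparametrization.

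The main point is the triangle inequality. Fix $x,y,z \in X$ and write $a := \sup_{\gamma: x \mapsto z} \inf_t f(\gamma(t))$, $b := \sup_{\gamma: z \mapsto y} \inf_t f(\gamma(t))$, and $c := \sup_{\gamma: x \mapsto y} \inf_t f(\gamma(t))$. After cancellation of $f(x)$ and $f(y)$, the triangle inequality $d_f(x,y) \leq d_f(x,z) + d_f(z,y)$ reduces to
\begin{equation}
a + b \;\leq\; f(z) + c \,.
\end{equation}
Given $\veps > 0$, choose paths $\gamma_1: x \mapsto z$ and $\gamma_2: z \mapsto y$ realizing $\inf_t f(\gamma_i(t)) \geq a - \veps$, $b - \veps$ respectively, and concatenate them into a path $\gamma_1 \ast \gamma_2: x \mapsto y$. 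By construction, $\inf_t f((\gamma_1 \ast \gamma_2)(t)) \geq \min(a,b) - \veps$, so taking $\veps \to 0$ and then the supremum over $\gamma$ yields $c \geq \min(a,b)$. Without loss of generality assume $a \leq b$; then $c \geq a$ and, using the trivial bound $b \leq f(z)$ (since any path from $z$ to $y$ has $\inf \leq f(z)$), we get $a + b \leq c + f(z)$, as required.

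No step here is particularly delicate: the only subtlety is making sure that concatenation of continuous paths remains continuous (which is standard) and that the infimum of $f$ on a concatenation is the minimum of the two infima. I would expect the write-up to be essentially the four bullets above, with the triangle inequality as the only substantive calculation.
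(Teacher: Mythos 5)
Your proposal is correct and follows essentially the same route as the paper: the substantive step is the triangle inequality, reduced to $a+b\leq f(z)+c$, proved by concatenating near-optimal paths to get $c\geq\min(a,b)$ and combining with the trivial bound $b\leq f(z)$ after a WLOG. You are slightly more careful than the paper (explicit $\veps$-approximation and explicit verification of symmetry, positivity, and vanishing on the diagonal, which the paper dismisses as easy), but the argument is the same.
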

\begin{proof}
Checking symmetry and positivity is easy. The only non-obvious point is that the triangle inequality is satisfied by this expression. Let $x,y,z \in X$ and denote 
\be
[x\mapsto y] := \sup_{\gamma: x \mapsto y} \inf_{t \in [0,1]} f \circ \gamma(t) \,.
\ee
It suffices to show the following inequality 
\be
[x \mapsto z] + [z \mapsto y] \leq [x \mapsto y] + f(z) \,.
\ee
Let $\gamma$ be a path from $x$ to $z$ and $\eta$ be a path from $z$ to $y$ and let $\gamma * \eta$ be the concatenation of these two paths. By definition,
\be
\inf_{t\in [0,1]} f\circ(\gamma * \eta)(t) \leq [x \mapsto y] \,,
\ee
from which it follows that
\be
[x \mapsto z] \wedge [z \mapsto y] \leq [x \mapsto y]\,.
\ee
Without loss of generality, suppose that $[x \mapsto z]$ achieves the above minimum and note that
\be
 [z \mapsto y] \leq f(z)
\ee
by definition of $[z \mapsto y]$. Adding the two last inequalities together,
\be
[x \mapsto z] + [z \mapsto y] \leq [x \mapsto y] + f(z)\,,
\ee
as desired.
\end{proof}
\begin{proposition}
\label{prop:dfidentifiescc}
Let $f$ be a continuous function as above, then $d_f$ identifies the connected components of the superlevel sets.
\end{proposition}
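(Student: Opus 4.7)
The plan is to directly unwind the two clauses of the asserted equivalence from the definition of $d_f$. I will keep the abbreviation
$[x \mapsto y] := \sup_{\gamma: x \mapsto y} \inf_{t \in [0,1]} f \circ \gamma(t)$ introduced in the preceding proof.

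First, since every path $\gamma$ from $x$ to $y$ satisfies $\inf_t f\circ\gamma(t) \leq f(x)\wedge f(y)$, we have $[x\mapsto y] \leq f(x)\wedge f(y)$, so
\be
d_f(x,y) \;=\; f(x)+f(y) - 2[x\mapsto y] \;\geq\; f(x)+f(y) - 2\bigl(f(x)\wedge f(y)\bigr) \;=\; \abs{f(x)-f(y)} \,.
\ee
Hence $d_f(x,y)=0$ forces $f(x)=f(y)=:t$, and under this constraint vanishing reduces to the single equality $[x\mapsto y]=t$. This already extracts the first part of property 1.

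The next step is to interpret $[x\mapsto y]=t$ geometrically. Since $[x\mapsto y]\leq t$ automatically, the equality is equivalent to the approximation statement: for every $\veps>0$ there exists a path $\gamma_\veps : [0,1] \to X$ from $x$ to $y$ with $\inf_s f\circ\gamma_\veps(s) > t-\veps$, i.e.\ whose image is contained in the open superlevel set $X_{>t-\veps}$. Phrased differently, $[x\mapsto y]=t$ holds iff $x$ and $y$ lie in the same \emph{path-}connected component of $X_{>t-\veps}$ for every $\veps>0$.

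The final step is to upgrade "path-connected component" to "connected component"; this is exactly where the hypothesis that $X$ is locally path-connected is used. An open subset of a locally path-connected space is itself locally path-connected, and in such a space the path-connected components and the topological connected components coincide (both are the open partition into path components). Applying this to $X_{>t-\veps}$ closes the chain of equivalences and yields property 1 in both directions.

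The main obstacle is the careful bookkeeping of strict versus weak inequalities: the supremum defining $[x\mapsto y]$ need not be attained, which is why the equivalence must be phrased via the open superlevel sets $X_{>t-\veps}$ for all $\veps>0$ rather than by a single closed level $X_t$. Local path-connectedness is essential at the last step; without it, path and connected components of $X_{>t-\veps}$ could differ and the statement would break.
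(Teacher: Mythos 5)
Your proof is correct and follows essentially the same route as the paper: unwind the definition of $d_f$, deduce $f(x)=f(y)$ from $[x\mapsto y]\le f(x)\wedge f(y)$, and translate $[x\mapsto y]=f(x)$ into $x$ and $y$ lying in the same component of $X_{>f(x)-\veps}$ for every $\veps>0$. Your write-up is in fact a bit more complete than the paper's: the paper obtains $f(x)=f(y)$ by contradiction and dismisses the $(\Leftarrow)$ direction as immediate, whereas you make the bound $d_f(x,y)\ge\abs{f(x)-f(y)}$ explicit and spell out the identification of path components with connected components of the open superlevel sets via local path-connectedness, which is precisely where that hypothesis enters.
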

\begin{proof}
The $(\Leftarrow)$ direction is immediate, so let us show $(\Rightarrow)$. 
\par
Suppose that $d_f(x,y) = 0$ and that $f(x)\neq f(y)$, then, 
\be
\sup_{\gamma: x \mapsto y} \inf_{t \in [0,1]} f(\gamma(t)) = \frac{f(x)+f(y)}{2} > f(x) \wedge f(y) \,.
\ee
However,
\be
\sup_{\gamma: x \mapsto y} \inf_{t \in [0,1]} f(\gamma(t))  \leq f(x) \wedge f(y) \,,
\ee
which leads to a contradiction, so $f(x) = f(y)$. The condition $d_f(x,y)=0$ becomes:
\be
f(x) = \sup_{\gamma:x \mapsto y} \inf_{t\in [0,1]} f(\gamma(t)) \,.
\ee
This is only possible if for every $\veps >0$ there is a path $\gamma$ lying entirely in $X_{>f(x)-\veps}^x$, so
\be
x,y \in \bigcap_{\veps>0} X_{>f(x)-\veps}^x
\ee
finishing the proof.
\end{proof}
With these technicalities out of the way, let us consider the metric space
\be
(T_f,d_f):= (X/\{d_f=0\},d_f)\;,
\ee
where $X/\{d_f=0\}$ denotes the quotient of $X$ where we identify all points $x$ and $y$ on $X$ satisfying $d_f(x,y)=0$. Slightly abusing the notation, let $d_f$ denote the distance induced on $T_f$ by the pseudo-distance $d_f$ on $X$.
\par
The metric structure of $T_f$ turns out to be simple, as $T_f$ is an $\R$-tree. Let us briefly recall the definition of an $\R$-tree.
\begin{definition}[Chiswell, \cite{Chiswell_2001}]
An \textbf{$\R$-tree} $(T, d)$ is a connected metric space such that any of the following equivalent conditions hold:
\begin{itemize}
  \item $T$ is a geodesic connected metric space and there is no subset of $T$ which is homeomorphic to the circle, $\Sphere_1$;
  \item $T$ is a geodesic connected metric space and the Gromov 4-point condition holds, \ie :
  \be
  \forall x,y,z,t \in T \quad d(x,y) + d(z,t) \leq \max\left[d(x,z) +d(y,t), d(x,t)+d(y,z)\right] \,; \nonum
  \ee
  \item $T$ is a geodesic connected $0$-hyperbolic space.
\end{itemize}
A \textbf{rooted $\R$-tree $(T,O,d)$} is an $\R$-tree along with a marked point $O \in T$.  
\end{definition}
A first important remark is that since $X$ is connected, so is $T_f$. To show $T_f$ is an $\R$-tree, we will use the first characterization of the definition above and show both conditions, \textit{i.e.} that there are no subspaces of $T_f$ which are homeomorphic to $\Sphere_1$ and that $T_f$ is in fact a geodesic metric space, to be satisfied separately. 
\par
Before showing this, it is helpful to introduce some notation. 
\begin{notation}
Let $\pi_f: X \to T_f$ denote the canonical projection onto $T_f$ and let $O$ denote the root of $T_f$ (\textit{i.e.} $f(O)=\min f$), let us define the following quantity
\begin{align}
\label{eq:elltau}
\ell(\tau) &:= \inf_X f + d_f(O,\tau) \,,
\end{align}
where $X^\tau_{f(\tau)}$ denotes the connected component of the superlevel set $X_{f(\tau)}$ containing a preimage of $\tau$.
\end{notation}
\begin{remark}
These objects are well-defined by definition of $d_f$. 
\end{remark}
\begin{definition}
The \textbf{pseudo-distance topology on $X$} or the \textbf{topology of $d_f$} is the topology on $X$ generated by the open balls: 
\be
B(x,r) := \{z \in X \, \vert \, d_f(x,z)<r\}
\ee
\end{definition}
Despite the fact that the pseudo-distance topology is not in general Hausdorff, it is nonetheless fine enough to be useful, as shown by the two following technical lemmas.
\begin{lemma}
\label{lemma:topodftopoususual}
Let $X_{>r}$ denote the open superlevel set $\{f>r\}$ on $X$, then $X_{>r}$ has same connected components for the topology of $d_f$ on $X$ and the usual topology of $X$.
\end{lemma}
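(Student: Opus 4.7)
The strategy is to show that each usual-connected component $C$ of $X_{>r}$ is simultaneously $d_f$-connected and $d_f$-clopen in $X_{>r}$; as connected components are the maximal connected subsets, this forces both notions to coincide. Since $X$ is locally path-connected and $X_{>r}$ is open in $X$, the subspace $X_{>r}$ is itself locally path-connected, so its usual-connected components are open in $X_{>r}$ and path-connected.

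First, to establish $d_f$-connectedness of $C$, I would show that any usually-continuous path $\gamma : [0,1] \to X$ remains continuous as a map into $(X,d_f)$. For $s \leq t$, the restriction $\gamma|_{[s,t]}$ is itself a path from $\gamma(s)$ to $\gamma(t)$, which by the definition of $d_f$ yields
\be
d_f(\gamma(s),\gamma(t)) \leq f(\gamma(s)) + f(\gamma(t)) - 2 \inf_{u \in [s,t]} f(\gamma(u)),
\ee
and the right-hand side tends to $0$ as $s \to t$ by continuity of $f \circ \gamma$. Applying this to usual paths lying inside $C$ (available by usual path-connectedness), $C$ is $d_f$-path-connected, hence $d_f$-connected.

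For $d_f$-openness, fix $x \in X_{>r}$ and set $\veps := f(x) - r > 0$. For any $z \in B(x,\veps) \cap X_{>r}$ one has, directly from the definition of $d_f$,
\be
\sup_{\eta : x \mapsto z} \inf_u f \circ \eta(u) \;=\; \frac{f(x) + f(z) - d_f(x,z)}{2} \;>\; r,
\ee
so by the definition of the supremum there exists an actual usual-continuous path $\eta$ from $x$ to $z$ with $\inf_u f\circ\eta(u) > r$, i.e. entirely contained in $X_{>r}$. Hence $B(x,\veps) \cap X_{>r}$ lies in the usual-connected component of $x$, proving that usual-connected components are $d_f$-open in $X_{>r}$; their complements, being unions of other such components, are $d_f$-open too, so each is $d_f$-clopen and therefore coincides with a $d_f$-connected component. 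The only genuinely delicate step is the first one, namely showing that usually-continuous paths are also $d_f$-continuous; everything else is a direct ball-size estimate coupled with elementary topology.
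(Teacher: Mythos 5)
Your proof is correct and follows essentially the same route as the paper: in both, the key estimate is that the $d_f$-ball $B(x,f(x)-r)$ lies inside the usual path component of $x$ in $X_{>r}$ (making usual components $d_f$-clopen), while connectedness is transferred from the usual topology to the coarser $d_f$-topology — you via $d_f$-continuity of usual paths, the paper via noting $d_f$-open sets are open for the usual topology. Your concluding clopen-plus-connected argument also neatly subsumes the paper's separate converse direction, so nothing is missing.
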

\begin{proof}
Let us start by noticing that $X_{>r}$ is open in $X$ for both topologies. For the usual topology, it is trivial. For the topology of $d_f$ it is the complement in $X$ of the closed ball $\overline{B(p,r-\inf f)}$, where $p$ is a point on $X$ achieving the infimum of $f$, which exists by compactness of $X$.
\par
Let $Y$ now denote a connected component of $X_{>r}$ for the usual topology. The set $Y$ is connected for the topology of $d_f$. Otherwise, we could write $Y = U \sqcup V$ for some open sets $U$ and $V$, but since open sets of the topology of $d_f$ are also open for the usual topology, this leads to a contradiction, as we assumed $Y$ was connected for the usual topology. We will now show that $Y$ is both open and closed in $X_{>r}$ for the topology of $d_f$. $Y$ is open, since it can be written as the union of open balls
\be
Y = \bigcup_{y \in Y} B(y,f(y)-r) \,.
\ee
Additionally, $Y$ is closed since its complement is open, as it can be similarly written as the union of open balls. It follows that $Y$ is also a connected component of $X_{>r}$ for the topology of $d_f$.
\par
Now, suppose that $Y$ is a connected component of $X_{>r}$ for the topology of $d_f$. Any ball of the covering above is path connected, but since $Y$ is connected, this implies that $Y$ is path connected (and the paths are completely included within $Y$), it is thus a path connected component of $X_{>r}$. Since $X$ is connected and locally path connected for the usual topology, $Y$ is a path connected component of the usual topology, rendering it a connected component for the usual topology.
\end{proof}
\begin{lemma}
\label{lemma:pifbijectioncc}
Denote $T_{>r}$ the open superlevel set on $T_f$. For the topology of $d_f$, $\pi_f$ induces a bijective correspondence between the connected components of $X_{>r}$ and those of $T_{>r}$.
\end{lemma}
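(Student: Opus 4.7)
The plan is to prove that $\pi_f$ sends components to components in both directions, and that this assignment is a bijection. I will work with the $d_f$-topology throughout, invoking Lemma \ref{lemma:topodftopoususual} to identify connected components with those for the usual topology of $X$.

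First, I would observe that the function $f$ descends to a $1$-Lipschitz function on $T_f$: by Proposition \ref{prop:dfidentifiescc}, $d_f(x,y)=0$ forces $f(x)=f(y)$, so $f$ is well-defined on $T_f$, and the bound $|f(x)-f(y)|\leq d_f(x,y)$ is immediate from the definition of $d_f$. Consequently $T_{>r}=f^{-1}((r,\infty))$ is open in $T_f$, and $\pi_f^{-1}(T_{>r})=X_{>r}$. Continuity of $\pi_f$ for the topology of $d_f$ is automatic, so $\pi_f$ sends every connected component $Y$ of $X_{>r}$ to a connected subset of $T_{>r}$, yielding a well-defined map from components of $X_{>r}$ to components of $T_{>r}$.

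Next, I would prove injectivity: if $Y_1,Y_2$ are components of $X_{>r}$ with $\pi_f(Y_1)\cap\pi_f(Y_2)\neq\emptyset$, pick $x\in Y_1,y\in Y_2$ with $d_f(x,y)=0$. By Proposition \ref{prop:dfidentifiescc} we get $f(x)=f(y)$ and, for every $\veps>0$, $x$ and $y$ belong to the same component of $X_{>f(x)-\veps}$. Choosing $\veps<f(x)-r$ forces $x,y$ to be in the same component of $X_{>r}$, hence $Y_1=Y_2$.

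The key technical step, and the one I expect to be the main obstacle, is showing that $\pi_f(Y)$ is open in $T_f$ for every component $Y$ of $X_{>r}$. Given $\tau=\pi_f(x)$ with $x\in Y$, choose $0<\eps<f(x)-r$. For any $z\in X$ with $d_f(x,z)<\eps$, the identity $d_f(x,z)=f(x)+f(z)-2\sup_\gamma\inf_t f\circ\gamma(t)$ forces
\[
\sup_{\gamma:x\mapsto z}\inf_{t\in[0,1]}f(\gamma(t))\;>\;\tfrac{1}{2}(f(x)+f(z))-\tfrac{\eps}{2}\;>\;r,
\]
using $f(z)\geq f(x)-\eps>r$. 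Hence some path from $x$ to $z$ lies in $X_{>r}$, so $z\in Y$ by local path-connectedness. Therefore $B(x,\eps)\subseteq Y$ and, as the $d_f$-ball $B(\tau,\eps)$ in $T_f$ is precisely $\pi_f(B(x,\eps))$, it is contained in $\pi_f(Y)$; this proves openness.

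Finally I would conclude surjectivity onto components: for any component $Z$ of $T_{>r}$, the open set $\pi_f^{-1}(Z)\subseteq X_{>r}$ decomposes as a disjoint union $\bigsqcup_\alpha Y_\alpha$ of components of $X_{>r}$, so $Z=\bigsqcup_\alpha \pi_f(Y_\alpha)$ is a disjoint union (by the injectivity step) of nonempty open connected subsets (by the openness step). Connectedness of $Z$ then forces a single $Y_\alpha$, giving the desired bijection.
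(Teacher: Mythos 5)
Your proof is correct. The overall skeleton matches the paper's: disjointness of images of distinct components is deduced from Proposition \ref{prop:dfidentifiescc} exactly as in the paper (picking $\veps < f(x)-r$), and the other half of the correspondence rests on openness of $\pi_f$ for the $d_f$-topology. The genuine difference is in how that openness is handled: the paper simply asserts that $\pi_f$ is open and closed for the topology of $d_f$ and combines this with Lemma \ref{lemma:topodftopoususual} to conclude that components of $X_{>r}$ map onto components of $T_{>r}$, whereas you prove the needed openness quantitatively from the explicit formula for $d_f$ (the estimate $\sup_\gamma \inf_t f\circ\gamma > \tfrac{1}{2}(f(x)+f(z))-\tfrac{\eps}{2} > r$, which produces a path in $X_{>r}$ and gives $B(x,\eps)\subseteq Y$, hence $B(\tau,\eps)=\pi_f(B(x,\eps))\subseteq\pi_f(Y)$), and you then finish with a partition-of-a-connected-set argument to see that each component $Z$ of $T_{>r}$ equals $\pi_f(Y)$ for a single component $Y$. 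Your version is more self-contained and makes precise a step the paper leaves implicit; the paper's is terser. Two cosmetic points: calling $\pi_f^{-1}(Z)$ ``open'' is unjustified (components of $T_{>r}$ need not be known to be open at this stage) but also unused, and the claim that $\pi_f^{-1}(Z)$ is a union of \emph{full} components of $X_{>r}$ deserves the one-line maximality remark that if a component $Y$ meets $\pi_f^{-1}(Z)$, then $\pi_f(Y)$ is connected, meets $Z$, and hence lies in $Z$, so $Y\subseteq\pi_f^{-1}(Z)$; also, local path-connectedness is not actually needed where you invoke it, since the path in $X_{>r}$ already forces $z\in Y$.
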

\begin{proof}
Since $\pi_f$ is surjective and is both open and closed for the topology of $d_f$, lemma \ref{lemma:topodftopoususual} implies that the map $\pi_f$ surjectively sends the connected components of $X_{>r}$ onto connected components of $T_{>r}$, since the connected components of $X_{>r}$ for the topology of $d_f$ and the usual topology of $X$ are the same. 
\par
It remains to show the injectivity. Note that $\pi_f$ is open and closed for the topology of $d_f$ on $X$. The connected components of $X_{>r}$ are either disjoint or equal and, in fact, so are the images by $\pi_f$ of these connected components. Otherwise, there exists some $\tau \in T_{>r}$ such that there is a preimage of $\tau$ lying in two different connected components of $X_{>r}$, which is impossible, as every preimage of $\tau$ must lie in the same connected component of $X_{>r}$ in accordance to proposition \ref{prop:dfidentifiescc}. This is equivalent to stating that if $Y$ and $Z$ are two connected components of $X_{>r}$ and $Y \neq Z$, then $\pi_f(Y) \cap \pi_f(Z) = \emptyset$, in particular $\pi_f(Y) \neq \pi_f(Z)$. Symbolically,
\be
Y \neq Z \Rightarrow \pi_f(Y) \neq \pi_f(Z) \,,
\ee 
which is the contrapositive of the statement of injectivity.  
\end{proof}

From the above lemmata, we get the following proposition.
\begin{proposition}
\label{prop:dfinducestrees}
The metric space $T_f := X/\{d_f = 0\}$ equipped with distance $d_f$ possesses no subspace homeomorphic to $\Sphere_1$.
\end{proposition}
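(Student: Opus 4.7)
The plan is to argue by contradiction: assume $\iota : \Sphere_1 \hookrightarrow T_f$ is a topological embedding with image $C$. The whole argument reduces to producing two distinct points $\sigma_1, \sigma_2 \in C$ at a common $f$-height $r$, joined inside $C$ by an arc on which $f \geq r$; with such data in hand I will show $d_f(\sigma_1, \sigma_2) = 0$, contradicting injectivity of $\iota$.

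First I would produce this configuration by studying the continuous map $F := f \circ \iota$ on $\Sphere_1$. If $F$ is non-constant, fix a maximizer $\theta^\ast$ with value $R = \max F$, choose $r \in (\min F, R)$, and define $\theta_1, \theta_2$ as the first points met moving from $\theta^\ast$ in each direction along the circle at which $F$ hits the value $r$. Continuity of $F$ together with $r > \min F$ guarantees these exist and are distinct, while by construction the arc of $\Sphere_1$ from $\theta_1$ to $\theta_2$ through $\theta^\ast$ satisfies $F \geq r$. If instead $F$ is constant, any two distinct points of $C$ and any arc between them in $C$ trivially satisfy the same property. In either case, setting $\sigma_i := \iota(\theta_i)$ I obtain two distinct points of $C$ at common height $r$ joined by an arc $\alpha \subset C$ with $f \geq r$ on $\alpha$.

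Next I would convert this into the desired identification. For every $r' < r$, the arc $\alpha$ is a connected subset of $T_{>r'}$ containing $\sigma_1$ and $\sigma_2$, so both points lie in a single connected component $U_{r'}$ of $T_{>r'}$. Lemma \ref{lemma:pifbijectioncc} identifies $U_{r'} = \pi_f(W_{r'})$ for a unique connected component $W_{r'}$ of $X_{>r'}$ and yields $\pi_f^{-1}(U_{r'}) = W_{r'}$. Any preimages $x_1, x_2$ of $\sigma_1, \sigma_2$ therefore lie in $W_{r'}$, which is path-connected because $X$ is locally path-connected and $W_{r'}$ is an open connected set. A path in $W_{r'}$ joining $x_1$ to $x_2$ keeps $f > r'$, so the sup-inf expression defining $d_f$ in definition \ref{def:explicitpseudodistance} is at least $r'$. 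Letting $r' \uparrow r$ and comparing with the obvious upper bound $f(x_1) \wedge f(x_2) = r$, the sup-inf equals exactly $r$, whence $d_f(x_1, x_2) = 0$ and $\sigma_1 = \sigma_2$ in $T_f$, the required contradiction.

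The main obstacle I anticipate is the first step, namely arranging $\theta_1$ and $\theta_2$ in the non-constant case so that the short arc through $\theta^\ast$ genuinely stays above level $r$; this is essentially an intermediate-value-theorem input on the circle. Once this geometric datum is secured, the rest is a direct transfer from $T_f$ back to $X$ via Lemma \ref{lemma:pifbijectioncc} and local path-connectedness.
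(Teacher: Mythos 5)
Your proof is correct and follows essentially the same route as the paper: locate a maximum of $f$ on the embedded circle, extract two distinct points at a common level $r$ joined by an arc staying above that level, transfer via Lemma \ref{lemma:pifbijectioncc} (together, implicitly, with Lemma \ref{lemma:topodftopoususual}) to a single path-connected component of $X_{>r'}$, and conclude $d_f=0$, contradicting injectivity. Your implementation is slightly cleaner in two respects—fixing the two points and letting $r' \uparrow r$ instead of moving auxiliary points $x_\pm^{\veps'}$ toward a fixed level, and handling the constant case of $f\circ\iota$ explicitly rather than appealing to $f$ being nowhere locally constant on $T_f$—but the underlying argument is the same.
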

\begin{proof}
We will reason by contradiction. Suppose that $T_f$ contains $U \subset T_f$ such that $U$ is homeomorphic to the circle, $\Sphere_1$. Note that $f$ descends to a function on $T_f$ which is not locally constant anywhere by definition of $d_f$ and in particular not locally constant anywhere on $U$, as the level-sets of $f$ in $T$ are totally discontinuous.
\begin{figure}[h!]
  \centering
    \includegraphics[width=0.6\textwidth]{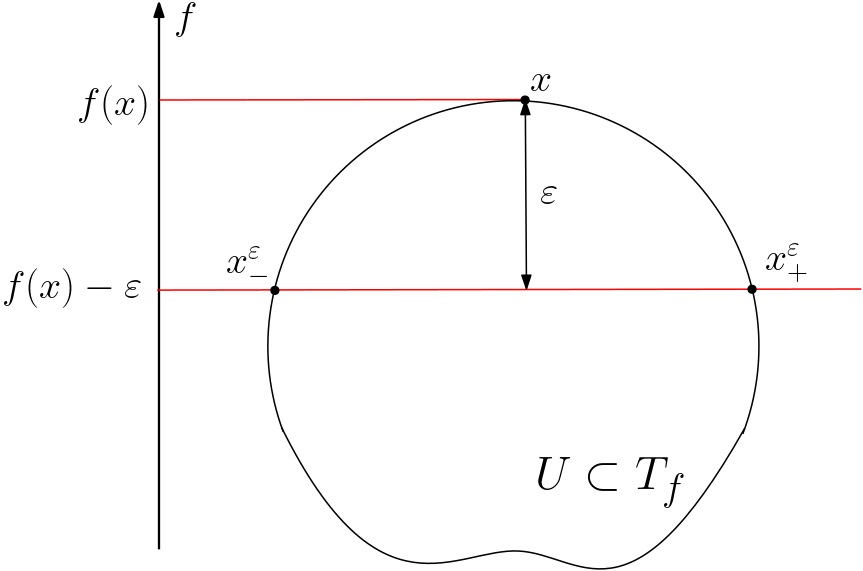}
\end{figure}
\par
It follows that there exists an element $x \in U$ such that the maximum of $f$ on $U$ is attained at $x$. For $\veps >0$ small enough, there are two distinct points $x_-^\veps$ and $x_+^\veps$ such that $f(x_+^\veps) = f(x)-\veps = f(x_-^\veps)$. Without loss of generality, we pick these points to be the closest ones to $x$ along an arbitrary parametrization of $U$ where this equality occurs. Since $U$ is homeomorphic to $\Sphere_1$, there is a path $\gamma$ linking $x_+$ and $x_-$ lying entirely above $f(x)-\veps$ and passing through $x$. The image of $\gamma$ in $T_{>f(x)-\veps}$ is contained within one and only one connected component of $T_{>f(x)-\veps}$, which we will denote $S$. By lemma \ref{lemma:pifbijectioncc}, $S$ corresponds to a unique connected component of $X_{>f(x)-\veps}$ with respect to the topology of $d_f$, which we will denote $X^S$. By lemma \ref{lemma:topodftopoususual}, $X^S$ is a connected component of $X_{>f(x)-\veps}$ for the usual topology. 
\par
For every $0<\veps' < \veps$, we can pick points $x_\pm^{\veps'}$ on $U$. The connected component $S$ contains $x_\pm^{\veps'}$ for every such $\veps'$ and since inverse images of these two points are connected in $X_{>f(x)-\veps}$, 
\be
d_f(x^{\veps'}_{+}, x^{\veps'}_- ) < 2 (f(x)-\veps') - 2 \!\!\!\!\!\inf_{\gamma :\, x^{\veps'}_{+} \mapsto x^{\veps'}_-} \!f < 2 (\veps- \veps') 
\ee
Letting $x_\pm^{\veps'} \to x_\pm^\veps$ in $U$ as $\veps'\to \veps$, we have that $d_f(x^\veps_-,x^\veps_+)=0$, leading to a contradiction, since we supposed that $x_+^\veps$ and $x_-^\veps$ were disjoint in $T_f$ (and therefore not a distance zero away from one-another).
\end{proof}

\subsection{From trees to barcodes}
Given a tree stemming from a continuous function $f: X \to \R$, it is possible to reconstruct the $H_0$-barcode of $f$ from $T_f$. If $T_f$ is finite, the relation between the barcode of $H_0(X,f)$ with respect to the superlevel filtration and the tree $T_f$ is given by algorithm \ref{alg:barcodefromtree}.

\begin{algorithm}[h!]
\SetAlgoLined
\KwResult{$\VV$}
 $\mathcal{F} \gets T$ \;
 $\VV \gets 0$ \;
 $i \gets 0$ \;
 \While{$\mathcal{F} \neq \varnothing$}{
  Find $\gamma$ the longest path in $\mathcal{F}$ starting from a root $\al$ and ending in a leaf $\beta$ \;
  \eIf{$i=0$}{
   $\VV \gets \VV \ds k[\ell(\al),\infty[$ \;
   }{
   $\VV \gets \VV \ds k[\ell(\al),\ell(\beta)[$ \;
  }
  $\mathcal{F} \gets \overline{\mathcal{F} \setminus \Image(\gamma)}$\;
  $i \gets i+1$ \;
 }
 \Return $\VV$
 \caption{A functorial relation between persistence modules and $\R$-trees}
\label{alg:barcodefromtree}
\end{algorithm}
\begin{figure}[h!]
  \centering
    \includegraphics[width=0.6\textwidth]{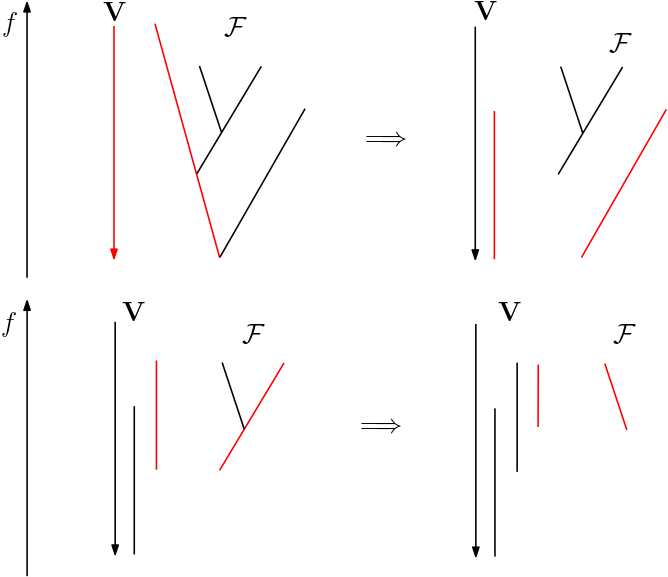}
  \caption{The first four iterations of algorithm \ref{alg:barcodefromtree}. For every step, in red is the longest branch of the tree, which we use to progressively construct the persistent module $\VV$ by associating an interval module whose ends correspond exactly to the values of the endpoints of the branches.}
  \label{fig:algorithm1}
\end{figure}
If $T_f$ is infinite, we can still give a correspondence between the barcode and the tree proceeding by approximation. This approximation procedure requires the introduction of so-called $\veps$-trimmings of $T_f$, of which we briefly recall the definition. Since the results of this section can be easily extended to any compact tree, we formulate the rest of this section in full generality. 
\par
For any rooted $\R$-tree $(T,d,O)$, we can define a filtering function $\ell: T \to \R$ by setting 
\be
\ell(\tau) := d(O,\tau) \,.
\ee 
This allows us to define the height above a point $\tau$ as follows.
\begin{definition}
The function of \textbf{the height above $\tau$} on a rooted $\R$-tree $T$ is a function $h: T \mapsto \R$, defined as
\be
h(\tau) := \sup_{\eta \in T^\tau_{\ell(\tau)}} d(O,\eta) - \ell(\tau)\,,
\ee
\end{definition}
The height above $\tau$ allows us to define so-called \textit{$\veps$-trimmings} or \textit{$\veps$-simplifications} of $T$.  
\begin{definition}
The \textbf{$\veps$-simplified tree of $T$, $T^\veps$} or the \textbf{$\veps$-trimmed tree of $T$}, is the subtree of $T$ defined as
\be
T^\veps := \{\tau \in T \; \vert \;  h(\tau) \geq \veps\}
\ee
\end{definition}
An $\veps$-trimmed tree is always finite by virtue of the compactness of $T$. For a monotone decreasing sequence $(\veps_n)_{n\in\N}$ such that $\veps_n \to 0$, we have the following chain of inclusions
\be
T^{\veps_1} \xhookrightarrow{} T^{\veps_2} \xhookrightarrow{} T^{\veps_3} \xhookrightarrow{} \cdots \,.
\ee
Applying algorithm \ref{alg:barcodefromtree}, we get a set of maps on the persistence modules induced by these inclusions. More precisely, denoting $\Alg(T^{\veps_n})$ the output of the algorithm
\be
\Alg(T^{\veps_1}) \to \Alg(T^{\veps_2}) \to \Alg(T^{\veps_3}) \to \cdots \,.
\ee
where the morphisms are the maps induced at the level of the interval modules generating $\Alg(T^{\veps_n})$. Indeed, the interval modules $k[\al,\beta_n[$ of $\Alg(T^{\veps_n})$ satisfy that there is exactly one interval module of $\Alg(T^{\veps_m})$ ($m>n$) such that $[\al,\beta_n[ \,\subset [\al,\beta_m[$. A natural definition for infinite $T$ is thus
\be
\Alg(T) := \varinjlim \Alg(T^{\veps_n}) \,.
\ee 
\par
In categoric terms, the algorithm above in fact is a functor
\be
\Alg : \bf{Tree} \to \bf{PersMod}_k \;,
\ee
where $\bf{Tree}$ is the category of rooted $\R$-trees seen as metric spaces, whose morphisms are isometric embeddings (which are not required to be surjective) preserving the roots, and where $\bf{PersMod}_k$ is the category of q-tame persistence modules over a field $k$ (\cf Oudot's book for details on the category of persistence modules \cite{Oudot:Persistence}). The action of $\Alg$ on morphisms between two trees $\zeta : T \to T'$ is defined as follows. If both $T$ and $T'$ are finite, since $\zeta$ is an isometric embedding and it preserves the root, we can define $\Alg(\zeta)$ to be
\be
\Alg(\zeta) := \DS_i \id_{k[\zeta(\al_i),\zeta(\beta_i)[} \,,
\ee
where $k[\al_i,\beta_i[$ denotes the modules in the interval module decomposition of $\Alg(T)$ (which is finite, since $T$ is as well). If $T$ is infinite, we extend the above definition by taking successive $\veps_n$-simplifications of $T$ and taking the direct limit of the construction above. Note that this procedure is well-defined since $\veps_n$-simplifications only depend on the function $h$, which in turn can be taken to only depend on the distance to the root.
\subsubsection{Trees stemming from a function}
Let us now consider a tree $T_f$ stemming from a function $f$ and show that $\Alg(T_f) = H_0(X,f)$.
\begin{proposition}
\label{prop:handpathexistence}
Let $\tau$ and $\eta$ be elements of $T_f$ such that $f(\tau) < f(\eta)$ and let $x \in \pi^{-1}(\tau)$ and $y \in \pi^{-1}(\eta)$, then
\be
\exists \text{ path } \gamma : x \mapsto y  \text{ s.t. } \forall t, \;f(\gamma(t)) \geq f(\tau) \iff h(\tau) \geq f(\eta)-f(\tau) \text{ and } x, y \in X_{f(\tau)}^\tau \,.
\ee
\end{proposition}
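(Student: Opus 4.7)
The proof splits into two implications. The forward direction is the easier one: assume a path $\gamma : x \to y$ with $f \circ \gamma \geq f(\tau)$ exists. Then the image of $\gamma$ lies in the closed superlevel set $X_{f(\tau)}$ and connects $x$ to $y$, so they belong to the same connected component, which is $X^\tau_{f(\tau)}$. Next, one verifies that the canonical projection $\pi_f : X \to T_f$ is continuous from the usual topology of $X$ to the metric topology of $(T_f, d_f)$; this follows from the local path-connectedness of $X$, since given $x_n \to x$ one can connect $x_n$ to $x$ by a short path in a path-connected neighborhood, and controlling $f$ on this neighborhood forces $d_f(x_n, x) \to 0$. Thus $\pi_f \circ \gamma$ is a continuous path in $T_f$ from $\tau$ to $\eta$ along which the descended function $f$ remains $\geq f(\tau)$, placing $\eta$ in $T^\tau_{f(\tau)}$. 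The definition of $h$ then yields $h(\tau) \geq f(\eta) - f(\tau)$.

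For the converse, assume $x, y \in X^\tau_{f(\tau)}$ and $h(\tau) \geq f(\eta) - f(\tau)$. Since $X$ is locally path-connected, every open superlevel set $X_{> f(\tau) - \veps}$ is locally path-connected, so its connected components coincide with its path-components. Because $x, y$ both lie in $X^\tau_{f(\tau)} \subset X^\tau_{> f(\tau) - \veps}$, for every $\veps > 0$ there is a path $\gamma_\veps : x \to y$ with $f \circ \gamma_\veps > f(\tau) - \veps$. The goal is then to extract from $(\gamma_\veps)_{\veps > 0}$ a limiting path $\gamma$ satisfying $f \circ \gamma \geq f(\tau)$.

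The main obstacle is precisely this passage to the limit, since a naive Hausdorff limit of the images $\gamma_\veps([0,1])$ in $X$ is a compact connected set which, in general, need not be path-connected. To circumvent this, the plan is to exploit the $\R$-tree structure of $T_f$. Applying the forward argument to the continuous projection $\pi_f$ already forces $\eta \in T^\tau_{f(\tau)}$, so the unique arc $[\tau, \eta] \subset T_f$ lies entirely in $T^\tau_{f(\tau)}$, using that in an $\R$-tree any connected subset is geodesically convex (a point removed from $[\tau, \eta]$ would disconnect $\tau$ from $\eta$, contradicting connectedness of $T^\tau_{f(\tau)}$). This arc provides a canonical continuous path in $T_f$ along which $f \geq f(\tau)$. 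I would then reparametrize each $\gamma_\veps$ so that its projection $\pi_f \circ \gamma_\veps$ traces $[\tau, \eta]$ monotonically in $\ell$, use compactness of the fibers $\pi_f^{-1}(\mu)$ (closed subsets of the compact $X$) together with the local path-connectedness of $X$ to patch together lifts on a dense countable set of parameters, and finally invoke Arzelà–Ascoli on the resulting equicontinuous family to extract a uniformly convergent subsequence whose limit $\gamma$ lies in $X_{f(\tau)}$ by continuity of $f$. The technical core of the argument is the simultaneous control of parametrizations and lifts, ensuring the equicontinuity needed to apply Arzelà–Ascoli.
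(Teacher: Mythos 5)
Your forward implication is fine and is essentially the paper's: the image of $\gamma$ lies in $X^\tau_{f(\tau)}$, and pushing $\gamma$ forward by $\pi_f$ (your continuity check for $\pi_f$ is correct and a reasonable way to make the paper's ``by definition of $h(\tau)$'' precise) places $\eta$ in $T^\tau_{\ell(\tau)}$, whence $h(\tau)\geq f(\eta)-f(\tau)$.

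The reverse implication, however, is not proved in your proposal. The paper settles it in one line by asserting that, since $X^\tau_{f(\tau)}$ is connected and $X$ is (locally) path-connected, there is a path from $x$ to $y$ staying above $f(\tau)$; you rightly flag that this is the delicate point (local path-connectedness gives path-connected components only for the \emph{open} superlevel sets, not for the closed set $X_{f(\tau)}$), but the substitute you propose does not close it. Concretely: (i) the reparametrization step is not available --- $\pi_f\circ\gamma_\veps$ only stays above level $\ell(\tau)-\veps$, so it need not remain in $T^\tau_{\ell(\tau)}$, can dip below $\ell(\tau)$ and enter branches attaching to $[\tau,\eta]$ only below that level, and there is no canonical way to force it to trace $[\tau,\eta]$ monotonically while keeping $\gamma_\veps$ a continuous path in $X$; (ii) more fundamentally, Arzel\`a--Ascoli requires equicontinuity of the (reparametrized, lifted) family, and none of your ingredients --- compactness of the fibers $\pi_f^{-1}(\mu)$, local path-connectedness of $X$ --- yields a uniform modulus of continuity for the $\gamma_\veps$. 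This is exactly what breaks when $X^\tau_{f(\tau)}$ is connected but not path-connected (a topologist's-sine-curve-type superlevel component): the approximating paths in $X_{>f(\tau)-\veps}$ are forced to oscillate with unbounded speed, no subsequence converges uniformly, and no limiting path in $X_{f(\tau)}$ can be extracted by this scheme. So the ``technical core'' you defer is the whole content of this direction, and the outlined plan cannot supply it; to conclude one needs the stronger input the paper uses (path-connectedness of $X^\tau_{f(\tau)}$ itself, which your argument should either prove or assume), not a compactness/limit argument from paths in the open superlevel sets.
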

\begin{proof}
Since there exists $\gamma$ connecting $x$ and $y$ and since $\gamma$ always stays above $f(\tau)$, we conclude naturally that $\Image(\gamma) \subset X_{f(\tau)}^\tau$, which implies that $h(\tau) \geq f(\eta) - f(\tau)$ by definition of $h(\tau)$. 
\par
The implication $(\Leftarrow)$ is clear since if $x,y \in X^\tau_{f(\tau)}$ and $X^\tau_{f(\tau)}$ is connected, by path connectedness of $X$ there exists a path between $x$ and $y$ which stays above $f(\tau)$. 
\end{proof}
This proposition suffices to prove the following theorem on the validity of algorithm \ref{alg:barcodefromtree}.
\begin{theorem}
Let $X$ be a compact, connected, locally path connected topological space and let $f: X\to \R$ be continuous. Then $\Alg(T_f) = H_0(X,f)$.
\label{thm:TfcalulatesH0}
\end{theorem}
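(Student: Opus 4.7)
My plan is to first reduce the problem from $X$ to the tree $T_f$ using the bijection between connected components of open superlevel sets, then handle the finite-tree case by induction on the number of leaves (implementing the elder rule), and finally extend to the infinite case using the direct limit defining $\Alg(T_f)$.

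For the reduction, I would note that Lemmas \ref{lemma:topodftopoususual} and \ref{lemma:pifbijectioncc} together imply that for every $r \in \R$ the projection $\pi_f$ induces a bijection between the connected components of $X_{>r}$ and those of $T_{>r}$, and this bijection is functorial in $r$ because, for $r' \leq r$, the inclusions $X_{>r} \hookrightarrow X_{>r'}$ and $T_{>r} \hookrightarrow T_{>r'}$ commute with $\pi_f$. Consequently $(r \mapsto H_0(X_{>r})) \cong (r \mapsto H_0(T_{>r}))$ as persistence modules, so it suffices to prove that $\Alg(T_f)$ computes the $H_0$-persistence module of $T_f$ equipped with its own superlevel filtration (and to identify this, up to the conventions fixed in the paper, with $H_0(X,f)$).

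For finite $T_f$ I would proceed by induction on the number of leaves. The base case (one leaf, so $T_f$ is a segment) gives the single interval module $k[\ell(O),\infty[$ on both sides. For the inductive step, the first pass of Algorithm~\ref{alg:barcodefromtree} extracts the path from the root $O$ to the highest leaf $\beta_1$ and outputs $k[\ell(O),\infty[$; Proposition \ref{prop:handpathexistence} identifies this path with the essential (never-merging) class of $H_0(T_{>-})$. Removing it produces a forest of rooted subtrees, each with strictly fewer leaves and each rooted at a branching point of $T_f$. Another application of Proposition \ref{prop:handpathexistence} shows that $H_0(T_{>r})$ splits as the direct sum of the essential summand and the $H_0$-persistence modules of these subtrees, since two points of $T_{>r}$ lie in the same connected component iff they lie in a common subtree above the relevant branching point. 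The inductive hypothesis then closes the argument.

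For infinite $T_f$ the definition $\Alg(T_f) := \varinjlim_n \Alg(T_f^{\veps_n})$ together with the finite case reduces the theorem to the assertion that $\varinjlim_n H_0(T_{>r}^{\veps_n}) \cong H_0(T_{>r})$ functorially in $r$, where $T_{>r}^{\veps_n} := T_f^{\veps_n} \cap T_{>r}$. This is where I expect the main obstacle: one must verify that the $\veps$-trimmings exhaust $T_f$ at the level of $\pi_0$, i.e.\ that every connected component of $T_{>r}$ meets $T_f^{\veps_n}$ for all $n$ sufficiently large. This should follow from compactness of $T_f$ and the fact that any such connected component contains non-leaf points (those with $h>0$), together with q-tameness of the target module. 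Functoriality of $\Alg$ on root-preserving isometric embeddings then identifies the transition maps in both direct limits, yielding the desired isomorphism and completing the proof.
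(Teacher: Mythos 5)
Your route is genuinely different from the paper's: the paper never inducts on leaves, but instead observes that $d_f$ identifies components (so the pointwise dimensions of $\Alg(T_f)$ and $H_0(X,f)$ agree when $T_f$ is finite) and then computes $\rk(H_0(X_r\to X_s))$ as $\dim\ker\bigl(H_0(X_s)\to H_0(X_s,X_r)\bigr)$ via the long exact sequence of the pair, which Proposition \ref{prop:handpathexistence} converts into the count of $\tau\in T_f$ with $f(\tau)=s$ and $h(\tau)\geq r-s$, i.e.\ the number of bars produced by Algorithm \ref{alg:barcodefromtree} spanning the relevant range; the infinite case is then handled by trimming, much as you do. Your elder-rule induction could in principle replace this, but as written the inductive step fails. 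The claim that $H_0(T_{>r})$ splits as the essential summand plus the $H_0$-persistence modules of the subtrees hanging off the main path is false: take $T_f$ a segment of height $10$ with one side branch attached at height $5$ reaching height $7$. At $r=3$ the set $T_{>3}$ is connected, so $H_0(T_{>3})$ has dimension $1$, while your right-hand side has dimension $2$, because the side subtree, rooted at its branching point and hence connected, contributes in its own superlevel module a class that never dies, whereas in $T_f$ that class dies at the branching value $5$. The justifying ``iff'' is exactly where this breaks: for $r$ below a branching value, points of distinct subtrees (or of a subtree and of the main path) do lie in a common component of $T_{>r}$ without lying in a common subtree. The correct statement needs each subtree's surviving class to be truncated at its branching value --- e.g.\ by inducting on the cokernel of the map induced by the older component, or by exhibiting directly the interval summands $k[\ell(b_i),\ell(\beta_i)[$ --- and this truncation is precisely the content the induction is supposed to establish, so it cannot be asserted as an unproved splitting.

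Two smaller points. First, your reduction replaces the closed superlevel filtration $X_r$ defining $H_0(X,f)$ by the open one $X_{>r}$ of Lemmas \ref{lemma:topodftopoususual} and \ref{lemma:pifbijectioncc}; for closed superlevel sets the component bijection can genuinely fail (two points with $d_f(x,y)=0$ need not lie in a common component of $X_{f(x)}$), so the identification must be made at the level of q-tame modules and their (undecorated) barcodes rather than pointwise in $r$ --- this should be said explicitly, since it is the same convention the paper relies on. Second, your treatment of the infinite case is sound in outline: injectivity of $H_0(T^{\veps_n}_{>r})\to H_0(T_{>r})$ and eventual surjectivity do follow from monotonicity of $h$ along descending geodesics and compactness, and this is close in spirit to the paper's reduction ``for $r-s>\veps_n$ use the finite case''.
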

\begin{remark}
This theorem is a slight improvement on the result of Curry in \cite[Theorem \S 2.13]{Curry_2018}. In the language of \cite{Curry_2018}, this constitutes a proof of the ``Elder rule'' with less assumptions of regularity. Indeed, in \cite{Curry_2018}, the assumption of a Morse set (or that $f$ is a Morse function) is necessary for the proof, whereas the functions hereby considered are merely required to be continuous. %{\color{red} We need to be precise here about Curry's assumption on regularity}
\end{remark}
\begin{remark}
By setting $X = T$ and $\ell = f$, theorem \ref{thm:TfcalulatesH0} states that $\Alg(T) =H_0(T,\ell)$.
\end{remark}
\begin{proof}
Suppose that $T_f$ is finite, then $\Alg(T_f)$ is a decomposable persistence module $\Alg(T_f):= \VV$. The fact that $\VV$ is pointwise isomorphic to $H_0(X,f)$ holds since $d_f$ correctly identifies the connected components of the superlevel sets. This guarantees the existence of a pointwise isomorphism since both spaces have the same (finite) dimension. 
\par
Let us now check that $\rk(\VV(r \to s)) = \rk(H_0(X_r \to X_s))$. The inclusion $X_r \xhookrightarrow{} X_s$ induces the following long exact sequence in homology
\begin{equation}
\begin{tikzcd}[row sep=small, column sep = small]
\cdots \arrow[r] & H_{1}(X_s) \arrow[r] \arrow[r] & H_{1}(X_s, X_r) \arrow[r] & H_0(X_r) \arrow[d]    &       &         \\
         &          &        & H_0(X_s) \arrow[r] & H_0(X_s,X_r) \arrow[r] & 0 
\end{tikzcd} \nonum
\end{equation}
Since this sequence is exact
\be
\rk(H_0(X_r \to X_s)) = \dim \ker(H_0(X_s) \to H_0(X_s,X_r)) \,.
\ee
For notational simplicity, let us denote $\phi:H_0(X_s) \to H_0(X_s,X_r)$. Note that $\phi[c] = [0]$ if and only if there is a path $\gamma$ between the representative $c \in X_s$ and an element $b \in X_r$ such that $\gamma$ stays within $X_s$. Without loss of generality, let us take $c$ such that $c \in \{f=s\}$. Finding such a path $\gamma$ is only possible if $c$ and $b$ lie in the same connected component of $X_r$. By proposition \ref{prop:handpathexistence}, this can happen if and only if $h([c]_{T_f}) \geq r-s$. It follows that
\be
\dim \ker \phi  = \#\{\tau \in T_f \, \vert \,  h(\tau) \geq r-s \, \text{ and }\, f(\tau)=s\} \,,
\ee
which concludes the proof for the finite case.
\par
If $T_f$ is infinite, we consider a sequence of $\veps_n$-trimmings of $T_f$ such that $\veps_n \xrightarrow[n \to \infty]{} 0$. For any $r>s$, there exists $n$ such that $r-s > \veps_n$. But $T_f^{\veps_n}$ is finite, so we are reduced to the previous case.
\end{proof}

\subsection{The inverse problem}
\label{sec:inverseproblem}
An interesting question is whether every (compact) tree stems from a function $f: X \to \R$. If the tree is a so-called \textit{merge tree} (in particular, we require that it be locally finite and 1-dimensional), a solution has been provided by Curry in \cite[\S 6]{Curry_2018}. We will now positively answer this question under the assumptions that $\updim T < \infty$ and that $X=[0,1]$ by constructing a function $f:[0,1] \to \R$, which constitute a wider class of trees than merge trees. The rest of this section will focus on proving the following theorem:
\begin{theorem}
Let $T$ be a compact $\R$-tree such that $\updim T < \infty$. Then, for any $\delta>0$ it is possible to construct a continuous function $f: [0,1] \to \R$ of finite $(\updim T + \delta)$-variation such that $T= T_f$. In particular, up to a reparametrization, $f$ can be taken to be $\frac{1}{\updim T + \delta}$-H\"older continuous. 
\label{thm:functionfromtree}
\end{theorem}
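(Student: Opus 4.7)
The plan is to construct $f$ as (a reparametrization of) the contour function associated with a depth-first traversal of $T$, realized as a limit of contour functions of the finite $\veps$-trimmings $T^{\veps_n}$.

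For each $n \in \N$, I would set $\veps_n = 2^{-n}$ and fix once and for all a cyclic ordering of branches at each vertex of $T$; this induces a depth-first order on each finite tree $T^{\veps_n}$, of total edge-length $L_n < \infty$. I then define the standard contour $c_n : [0, 2L_n] \to \R$ by traversing each edge at unit speed (once upward, once downward) in the depth-first order and recording $c_n(t) = \ell(\cdot)$ at the current position. A direct verification, using that $d_{c_n}$ identifies connected components of superlevel sets (proposition \ref{prop:dfidentifiescc}), shows that $T_{c_n}$ is canonically isometric, as a rooted $\R$-tree, to $T^{\veps_n}$. To pass to the limit, I construct $c_{n+1}$ from $c_n$ by inserting, at the appropriate visiting time, a piecewise-linear excursion for each newly-added branch of $T^{\veps_{n+1}} \setminus T^{\veps_n}$. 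Because every inserted excursion has amplitude at most $\veps_n$, rescaling the sequence $(c_n)$ to a common time interval $[0,1]$ yields a uniform Cauchy sequence, with continuous limit $c$.

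The core quantitative step is a bound on the $p$-variation of $c$. Let $N(\veps)$ denote the minimal number of $\veps$-balls needed to cover $T$; by definition of $\updim T$, for every $\eta > 0$ one has $N(\veps) \lesssim \veps^{-(\updim T + \eta)}$ as $\veps \to 0$. The new excursions introduced at step $n+1$ are in bijection with the leaves of $T^{\veps_{n+1}}$ not already present in $T^{\veps_n}$, whose number is controlled by $N(\veps_n)$, and each such excursion has amplitude at most $\veps_n$, contributing at most $\veps_n^p$ to the $p$-variation. Summing these contributions yields
\be
V_p^p(c) \;\lesssim\; \sum_{n \geq n_0} \veps_n^{-(\updim T + \eta)} \veps_n^{p} \;=\; \sum_{n \geq n_0} 2^{-n(p - \updim T - \eta)},
\ee
which is finite whenever $p > \updim T + \eta$. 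Choosing $\eta = \delta/2$ and $p = \updim T + \delta$ gives $V_p(c) < \infty$.

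For the H\"older claim I would use the standard $p$-variation reparametrization: $\phi(s) := V_p^p(c|_{[0,s]}) / V_p^p(c)$ is monotone and continuous from $[0,1]$ onto $[0,1]$, and $f := c \circ \phi^{-1}$ then satisfies $|f(s) - f(t)| \leq V_p(c) |s-t|^{1/p}$, hence is $\frac{1}{p}$-H\"older. Since the $H_0$-pseudodistance depends only on the image of the function, $T_f = T_c$, and the remaining task is to show $T_c = T$. The hard part is precisely this final identification: each $T^{\veps_n}$ embeds canonically and isometrically into $T_c$ by construction, but one must verify that the limiting process does not produce spurious points. This should follow from the uniform convergence $c_n \to c$ combined with the density of $\bigcup_n T^{\veps_n}$ in $T$, itself a consequence of compactness of $T$ and of $\veps_n \to 0$; concretely, any $\tau \in T_c$ is represented by some $t \in [0,1]$ which is approached by visiting-times of points of $T^{\veps_n}$ converging to an element of $T$.
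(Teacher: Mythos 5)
Your high-level strategy (contour functions of the trimmings $T^{\veps_n}$, pass to a limit, control the variation by counting small excursions) is the paper's, but the central quantitative step of your construction fails as stated. With unit-speed traversal the domain of $c_n$ is $[0,2\lambda(T^{\veps_n})]$, and $\lambda(T^{\veps_n})\to\infty$ whenever $\updim T>1$ (by theorem \ref{thm:Lfandupboxdim}, $\lambda(T^{\veps})$ grows like $\veps^{1-\updim T}$ along a subsequence; e.g.\ a star with $\sim 2^{k\al}$ arms of length $2^{-k}$, $\al>1$). After rescaling each $c_n$ to $[0,1]$ the approximants are no longer aligned: the excursions inserted at step $n+1$ have total duration comparable to the whole previous duration, so the visiting time of a fixed tall branch moves by an order-one fraction of $[0,1]$, and $\norm{\tilde c_{n+1}-\tilde c_n}_{\infty}$ is then of the order of branch heights (up to $\mathrm{diam}\, T$), not of $\veps_n$. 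The amplitude bound gives a Cauchy estimate only when $c_{n+1}$ restricted to the old times literally equals $c_n$, which is incompatible with a global rescaling to a fixed interval once the lengths diverge. This is precisely what the paper's proof is built around: the forest $\overline{T^{a/2^{k+1}}\setminus T^{a/2^{k}}}$ is traversed with the $x$-axis contracted by $\lambda^{k}$ with $\lambda<2^{1-\updim T-\delta}$, and lemma \ref{lemma:goal1} (this is where $\updim T<\infty$ and the leaf-count bound enter) guarantees the total interval length converges; the approximants then agree on common subintervals and $\norm{f_n-f_m}_{\infty}\leq a2^{-(n\wedge m)}$ is immediate (lemma \ref{lemma:fnisCauchy}). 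To repair your argument you would need such a geometric speed-up of the later contours (or replace the sup-norm Cauchy argument altogether), not a uniform rescaling of unit-speed contours.

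Two further points. Your $p$-variation estimate treats the inserted excursions as contributing additively (``each contributes at most $\veps_n^p$''); true $p$-variation is not additive in this way, and what you are implicitly invoking is the nontrivial direction of Picard's comparison between $\norm{f}_{\pvar}$ and $\Pers_p(f)$ (proposition \ref{thm:PicardThm}) together with $N^{\veps}\leq \mathcal{N}_{T}(\veps/2)$; the paper instead obtains the variation and H\"older claims at the very end from Picard's theorem $\mathcal{V}(f)=\updim T_f$ once $T_f\cong T$ is established. Finally, the identification $T_c\cong T$ is not where the difficulty lies: once uniform convergence is secured, theorem \ref{thm:diagstreesqiso} gives $d_{GH}(T_{c_n},T_c)\leq 2\norm{c_n-c}_{\infty}$, and since $T_{c_n}\cong T^{\veps_n}\to T$ in Gromov--Hausdorff distance, compactness yields $T_c\cong T$; the genuinely hard part is the convergence of the approximants, which is exactly where your argument currently breaks.
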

The idea is to once again use $\veps$-simplifications $T^\veps$ for which we can construct a function by taking the contour of the tree. Such a construction is referred to as the Dyck path in the terminology of \cite{Stanley_2009}.

\subsubsection{Finite trees}
We can regard a rooted discrete tree as being an operator with $N$ inputs, where $N$ is the number of leaves of the tree. There is a natural operation on the space of discrete trees which composes these operations by:
\begin{figure}[h!]
  \centering
    \includegraphics[width=0.5\textwidth]{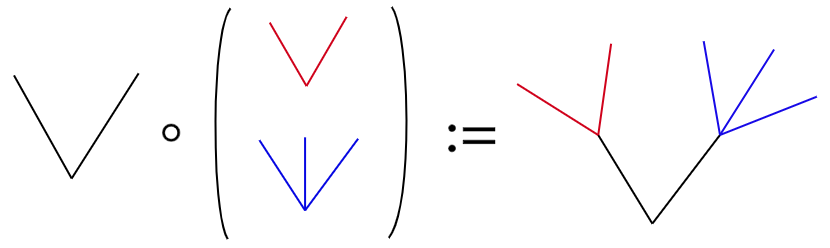}
\end{figure}
\par
These objects are called \textbf{operads} and originated in the study of iterated loop spaces \cite{May_1972, Boardman_1973,Boardman_1968}. Since then, these objects have been studied in different fields for a variety of purposes \cite{Loday_1996,Ginzburg_1994}. We will not give the explicit definition of an operad here, as a rigorous introduction is unnecessary for our purposes. However, we introduce this notion of composition of trees for notational simplicity.
\begin{figure}[h!]
  \centering
    \includegraphics[width=0.6\textwidth]{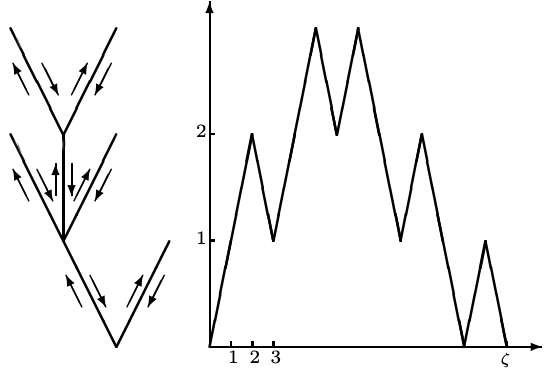}
  \caption{The Dyck path is the function $f$ which assigns the height (the distance from the root) of each vertex of the tree as we wrap around the tree following a clockwise contour around it. There is a map $\phi: T \mapsto [0,\zeta]$ where $[0,\zeta]$ is now marked at the points at which $f$ achieves its local maxima. The figure is taken from \cite{LeGall:Trees}.}
\label{fig:Dyckpath}
\end{figure}
\par
Given a discrete $\R$-tree $T$, if we have an embedding of $T$ in $\R^2$, or equivalently, a partial order on its vertices, we can assign to $T$ an interval $I$ of a certain length with $N$ marked points as well as a function $f_T : I \to \R$, where $N$ is the number of leaves of $T$. Using the terminology of \cite{Stanley_2009}, a way to do this is by considering the so-called \textbf{Dyck path} or \textbf{contour path} where the path around $T$ parametrized by arclength in $T$. The construction of the Dyck path has been carefully detailed in \cite{LeGall:Trees, Stanley_2009}, but it is better understood by looking at figure \ref{fig:Dyckpath}. By construction the equality: $T_{f_T} = T$ holds for any discrete $\R$-tree $T$. Here, equality is taken up to isometry.
\par
As per the description of figure \ref{fig:Dyckpath}, the construction of the Dyck path yields a map $\phi$ which to $T$ assigns an interval $\phi(T)$ with $N$ marked points. An example of the action of $\phi$ is illustrated in figure \ref{fig:actionofphi}.
\begin{figure}[h!]
  \centering
    \includegraphics[width=0.35\textwidth]{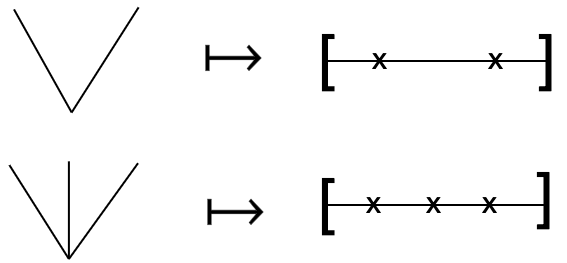}
  \caption{The action of $\phi$ on trees with two and three leaves respectively. The length of the intervals assigned is exactly the length of the contour around the trees and the marked points are the points at which $f_T$ achieves its maxima.}
\label{fig:actionofphi}
\end{figure}
\par
This operation $\phi$ is in fact a ``morphism'' with respect to a composition operation on the intervals, defined as follows. If we have an interval $I$ with $N$ marked points and $N$ intervals $J_k$ each with $M_j$ marked points, the result of the operation $I \circ (J_1, \cdots, J_N)$ is the insertion of the marked interval $J_k$ at the $k$th marked point of $I$. The length of $I \circ (J_1, \cdots, J_N)$ is
\be
\abs{I \circ (J_1, \cdots, J_N)} = \abs{I}+\sum_{k=1}^n \abs{J_k} \,,
\ee
where $\abs{\cdot}$ denotes the lengths of the intervals. The fact that $\phi$ is a ``morphism'' results from the definitions of compositions for trees and intervals. We can also define a variant of this morphism $\phi$, which we will call $\phi_\lambda$, which for any tree $T$ simply scales the (marked) interval $\phi(T)$ by a factor $\lambda$.\par
Given a tree $T$ the Dyck path $f_T: \phi(T) \to \R$ can be transformed into a function $f^\lambda_T: \phi_\lambda(T) \to \R$ by setting
\be
f^\lambda_T(x) := f_T(x/\lambda) \,.
\ee
This is a rescaling of the $x$-axis which means that $T_{f^\lambda_T}= T_{f_T} = T$ still holds. Once again, these equalities are taken up to isometry.
\begin{remark}
The definition of $f^\lambda_T$ is readily generalizable to forests. If $\mathcal{F}$ denotes a forest, then we define $f_{\mathcal{F}}^\lambda = \bigsqcup_{T \in \mathcal{F}} f_T^\lambda$.
\end{remark}
For discrete trees, there is an upper bound of the number of vertices of the tree given its number of leaves.
\begin{lemma}
Let $T$ be a rooted discrete tree, $N \geq 2$ be its number of leaves and $V$ be its number of vertices, then
\be
V \leq 2 N - 1 \,.
\ee
In particular, if the edges of $T$ all have length $1$, the contour of the tree can be done over an interval of length at most $4N-2$
\label{lemma:combinatoriallemma}
\end{lemma}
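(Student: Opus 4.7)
The natural reading is that by a \emph{discrete} rooted tree we mean a finite combinatorial rooted tree in which every internal vertex has at least two children: any degree-$2$ internal vertex can be absorbed into its incident edge without changing the underlying $\R$-tree (it is not a branching point), so this normalisation is costless for the purposes of the Dyck-path construction. Under this convention the proof of the first inequality is a classical double-counting argument, which I would organise as follows. Write $V=I+N$, where $I$ is the number of internal vertices. Since $T$ is a tree, its number of edges equals $V-1$. On the other hand, each non-root vertex contributes exactly one parent edge, and each internal vertex contributes at least two child edges, so
\begin{equation}
V-1 \;=\; \sum_{v \text{ internal}} (\text{children of } v) \;\geq\; 2 I \;=\; 2(V-N).
\end{equation}
Rearranging yields $V \leq 2N-1$, with equality precisely for binary trees.

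For the second assertion I would use the fact that the Dyck path is a parametrisation by arclength of a clockwise contour of the planar embedding of $T$, and that such a contour traverses each edge of $T$ exactly twice (once on the way down from the root towards a leaf, once on the way back). Hence, when every edge has length $1$, the total length of the contour interval equals $2(V-1)$. Combining this with the previous bound gives
\begin{equation}
2(V-1) \;\leq\; 2(2N-2) \;=\; 4N-4 \;\leq\; 4N-2,
\end{equation}
which is the stated bound (the constant $4N-2$ is loose by $2$, but is what is needed in the sequel and has the virtue of remaining valid under mildly different conventions, \eg when the root is considered to contribute an extra half-edge at each endpoint of the contour interval).

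The only point that could be viewed as an obstacle is really conceptual rather than technical: making precise the convention that internal nodes have at least two children, so that the combinatorial tree faithfully encodes the underlying $\R$-tree produced by an $\veps$-trimming $T^\veps$. Once that is fixed, both inequalities are immediate consequences of edge counting and of the defining property of the Dyck path.
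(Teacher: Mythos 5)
Your proof is correct, but it takes a slightly different route from the paper's. The paper quotes the classical fact that a (rooted, planar) binary tree with $N$ leaves has exactly $V=2N-1$ vertices, and handles general trees by blowing up non-binary vertices into binary ones with the same number of leaves, so that binary trees are the extremal case; the contour length is then read off for binary trees. You instead prove the bound directly by double counting: $V-1$ edges, each internal vertex supplying at least two child-edges, giving $V-1\geq 2(V-N)$. Your argument is more self-contained (no citation of the binary formula, no blow-up step) and has the virtue of making explicit the convention that the paper leaves implicit, namely that internal vertices have at least two children (equivalently, degree-$2$ points are absorbed); without some such convention the statement is simply false (a long chain of one-child vertices ending in a cherry has $N=2$ and $V$ arbitrary), and the paper's blow-up argument needs it just as much as yours does. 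For the contour, both proofs rest on the fact that the Dyck path traverses each unit edge twice; your count gives $2(V-1)\leq 4N-4$, while the paper asserts the exact value $4N-2$ for binary trees, the discrepancy of $2$ being a matter of convention about the root (e.g.\ a planted root edge), as you correctly observe — either way the stated bound of at most $4N-2$ holds.
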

\begin{proof}
For binary trees, it is known that \cite{LeGall:Trees,Stanley_2009}
\be
V = 2N -1 \,.
\ee
Given a tree with $N$ leaves, we can obtain a binary tree with $N$ leaves by blowing up the vertices which are non-binary. The inequality of the lemma follows. On a binary tree, the Dyck path passes through almost every point in $T$ twice, so the length of the interval is exactly $4N-2$. Since binary trees are the extremal case, a bound for all trees with $N$ leaves follows.
\end{proof}
The results above show the result of theorem \ref{thm:functionfromtree} for finite trees, since their upper-box dimension is equal to 1. 
\subsubsection{Infinite trees}
The concatenation of trees can be defined for $\R$-trees too in the obvious way. Given an infinite number of compositions, we can define a limit tree by defining it to be the limit of the partial compositions in the Gromov-Hausdorff sense. Ideally, we would like to have an equality of the following type
\be
T = T^a \circ \overline{(T \setminus T^a)} \,,
\ee
where $T \setminus T^a$ now denotes the rooted forest corresponding to the set $T \setminus T^a$. This equality is desirable because by taking infinitely many compositions, we can eventually recover the original tree $T$, by composing successive $\veps_n$-simplifications with each other. However, this equality does not hold since $T^a$ might not have the right amount of leaves for this operation to be well-defined. Nonetheless, we can decide to count the vertices $T^a \cap \overline{(T \setminus T^a)}$ as leaves with multiplicity, so that the equality above holds. 

For an infinite compact tree with $\updim T < \infty$, the idea is to take some appropriate rapidly decreasing (monotonous) sequence $(\veps_n)_{n \in \N^*}$ such that the interval
\be
I = \phi_{\veps_1}(T^{\veps_1}) \circ \phi_{\veps_2}(T^{\veps_2} \setminus T^{\veps_1}) \circ \phi_{\veps_3}(T^{\veps_3} \setminus T^{\veps_2}) \circ \cdots
\ee
has finite length. On each $\phi_{\veps_k}(T^{\veps_k} \setminus T^{\veps_{k-1}})$ we can consider the Dyck path on the forest $T^{\veps_k} \setminus T^{\veps_{k-1}}$. Defining a correct superposition of these Dyck paths, we would be done (\cf figure \ref{fig:functionconstruction}).
\begin{figure}[h!]
  \centering
    \includegraphics[width=0.7\textwidth]{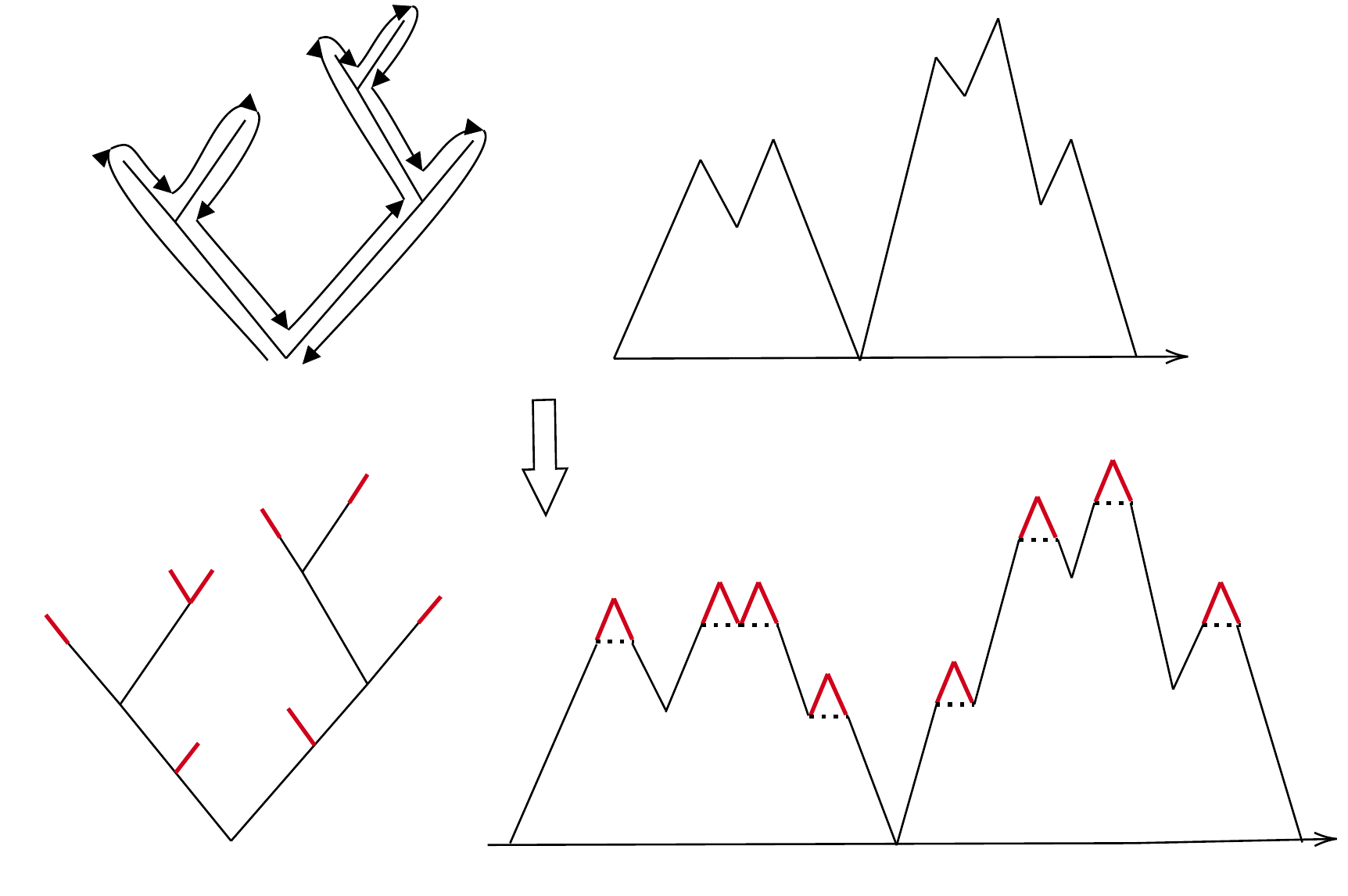}
  \caption{Starting from a tree $T^{a/2^k}$ (black) we construct the Dyck path around it in the first step. Then, we look at $T^{a/2^{k+1}}$ which leads to the addition of intervals (dotted), and a correction of the function at the $k$th step $f_k$ (which is the function depicted in black, extended linearly over the new intervals). We can further define a function by pasting the Dyck paths of the forest over the corresponding leaves, which leads to the function depicted in the second step (red and black).}
\label{fig:functionconstruction}
\end{figure}
For an infinite tree, it suffices to show that the sequence generated by the procedure of figure \ref{fig:functionconstruction} converges in the Gromov-Hausdorff sense to an interval of finite length $I$ and that $(f_i)_i$ converge in $L^\infty(I)$ to some function $f$.

\subsubsection*{Detailed construction of the approximants}
\begin{definition}
Let $I \subset \R_+$ be a marked interval with $n$ marked points, which we will denote $(i_k)_{\{1\leq k \leq n\}}$. Furthermore, let $(J_k)_{\{1\leq k \leq n\}}$ be a set of $n$ marked intervals of $\R_+$, each with $j_k$ marked points. Define $\sigma_I: I \to I \circ (J_1, \cdots J_n)$ by
\be
\sigma_I(x; J_1, \cdots, J_n) := \left[x +\!\!\!\!\!\!\!\! \sum_{i=1}^{\argmax_k \{i_k < x\}} \!\!\!\!\!\!\!\!\abs{J_i}\; \;\right] \;\; \in I \circ (J_1, \cdots, J_n) \,.
\ee
%This definition naturally extends to the whole interval $I$ and we can define the image $\sigma_I(I;J_1 \cdots J_n)$.% which is diagramatically represented in figure \ref{fig:sigmafunction}.
% \begin{figure}[h!]
%   \centering
%     \includegraphics[width=0.9\textwidth]{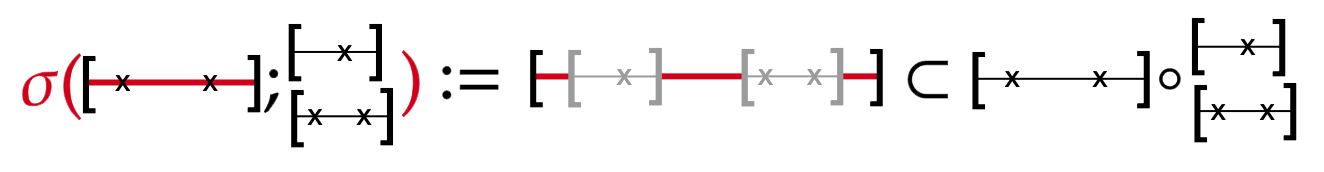}
%   \caption{An illustration of the definition of the image of $I$ by the $\sigma$-function (in red).}
%   \label{fig:sigmafunction}
% \end{figure}
\end{definition}
\begin{remark}
Fixing $J_1, \cdots, J_n$, $\sigma_I$ is a bijective map onto its image, meaning every point $y \in \sigma_I(I;J_1, \cdots, J_n)$ admits a preimage in $I$, which we will denote by $\sigma^{-1}_I(y;J_1, \cdots, J_n)$.
\end{remark}
\begin{definition}
Let $f:I \to \R$ be a continuous function from an interval $I$ with $n$ marked points and let $(J_1, \cdots, J_n)$ be intervals with each with $j_i$ marked points as before. Abusing the notation, we define another function $\sigma(-;J_1, \cdots, J_n)$ which assigns a function on $I$ to a function on $\sigma_I(I;J_1, \cdots, J_n)$ via the following formula
\be
\sigma_I(f;J_1, \cdots, J_n)(x) := \begin{cases} f(\sigma^{-1}_I(x;J_1, \cdots, J_n)) & x \in \sigma_I(I;J_1,\cdots,J_n) \\ \text{Linearly extend elsewhere}\end{cases}
\ee
\end{definition}
\begin{remark}
By continuity of $f:I \to \R$, this linear extension on $I \circ(J_1, \cdots, J_n)$ is in fact constant everywhere outside $\sigma_I(I; J_1, \cdots, J_n)$ (this is the dotted region in figure \ref{fig:functionconstruction}). Note also that $\sigma_I(f;J_1, \cdots, J_n)$ is continuous.
\end{remark}

\begin{definition}
Given a tree $T_f$ associated to a continuous function $f$, we define:
\begin{itemize}
  \item The \textbf{projection onto the tree} as the mapping
  \begin{align}
  \pi : \; &X \to T_f = X/\{d_f=0\} \,; \\
    & x \mapsto [x]
  \end{align}
  \item Let $\tau \in T_f$, define the \textbf{left preimage of $\tau$, $\ola{\tau}$} and the \textbf{right preimage of $\tau$ by $\pi$, $\ora{\tau}$} as
  \begin{align}
  \overleftarrow{\tau} := \inf \pi^{-1}(\tau) \\
  \overrightarrow{\tau}:= \sup \pi^{-1}(\tau)\,.
  \end{align}
\end{itemize}
\label{def:projectionontotree}
\end{definition}

\begin{definition}
Let $T$ be a discrete rooted tree and $T' \subset T$ be a subtree sharing roots with $T$ and suppose that we have chosen some embedding of $T$. Suppose there is a function $f: I \to \R$ on a certain interval $I$ such that $T_f = T'$. Then, the marking of $I$ induced by $T$ is the marking induced by marking the preimage $\pi_f^{-1}(T' \cap \overline{(T\setminus T')})$ chosen in the following way:
\begin{itemize}
  \item If $\tau \in T' \cap \overline{(T\setminus T')}$ admits a single preimage, choose this preimage;
  \item Else, if the connected component of $\tau$ in $\overline{T \setminus T'}$ is smaller (with respect to the partial order on the tree induced by the embedding of $T$) than every vertex strictly greater than $\tau \in T'$, choose $\ola\tau$. Otherwise, choose $\ora\tau$. In simpler terms, we choose $\ora\tau$ or $\ola\tau$ depending on whether the subtree of $\overline{T\setminus T'}$ containing $\tau$ branches to the right or to the left respectively of $T'$, with the convention that we say that it branches to the left if lies at the top of a leaf of $T'$ (\cf figure \ref{fig:treebranchleftright}). 
\end{itemize} 
We will denote this marking operation by $\mu(I;T',T,f)$.
\end{definition}

\begin{figure}[h!]
  \centering
    \includegraphics[width=0.5\textwidth]{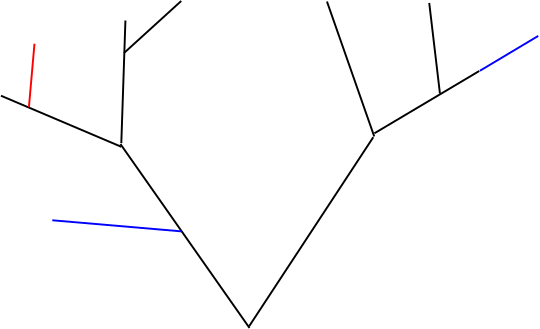}
  \caption{A tree $T$ embedded in $\R^2$ with a subtree $T'$ in black, the subtrees highlighted in red branch to the right and those in blue to the left.}
\label{fig:treebranchleftright}
\end{figure}

We can also define analogous maps to $\sigma_I$, but this time on the intervals $J_k$ as follows.
\begin{definition}
Let $I \subset \R_+$ be a marked interval with $n$ marked points, which we will denote $(i_k)_{\{1\leq k \leq n\}}$. Furthermore, let $(J_k)_{\{1\leq k \leq n\}}$ be a set of $n$ marked intervals of $\R_+$, each with $j_k$ marked points. Define $\eta_I^{J_k} : J_k \to I \circ (J_1, \cdots, J_n)$ by
\be
\eta_I^{J_k}(x; J_1, \cdots, J_n) := x + i_k + \sum_{j=1}^{k-1} \abs{J_j}\,.
\ee
These maps define a map $\eta_I = \bigsqcup_k \eta_I^{J_k}$ on $\bigsqcup_k J_k$ and $\eta_I$ also induces a map on the functions $f: \bigsqcup_k J_k \to \R$, defined analogously to $\sigma_I$, which we shall also denote $\eta_I$. 
\end{definition}
With this notation, the construction is made in accordance to algorithm \ref{alg:fnsints}. A depiction of the mechanism of algorithm \ref{alg:fnsints} can be found in figure \ref{fig:functionconstruction}.
\begin{algorithm}[h!]
\SetAlgoLined
\textbf{Output:} A set of unions of intervals $(I_i)_{i \in \{1, \cdots, n\}}$ and a set of functions on $I_n$, $(f_i :I_n \to \R)_{i \in \{1, \cdots, n\}}$ \\
\textbf{Input:} An infinite tree $T$ and $a>0$.

 $I_1 \gets \phi(T^a)$ \;
 $f_1 \gets f_{T^a}$ \;
 $I \gets I_1$ \;
 $i \gets 1$ \;
 \While{$i\leq n$}{
  $I_{i+1} := I_i \circ \phi_{\lambda^i}(\overline{T^{a/2^{i+1}} \setminus T^{a/2^{i}}})$ \;
  $f \gets \eta_{I_{i+1}}(f^{\lambda^{i+1}}_{T^{a/2^{i+1}} \setminus T^{a/2^{i}}} ; I_1, \cdots I_i)$ \;
  $I_i \gets \mu(I_i;T^{a/2^{i-1}},T^{a/2^i},f_i)$ \;
  \For{j=1; $j \leq i$}{
    $I_j \gets \sigma(I_j;\phi_{\lambda^i}(\overline{T^{a/2^{i+1}} \setminus T^{a/2^{i}}}))$ \;
    $f_j \gets \sigma(f_j;\phi_{\lambda^i}(\overline{T^{a/2^{i+1}} \setminus T^{a/2^{i}}}))$ \;
    $j \gets j+1$ \;
  }
  $f_{i+1} := f_i +  f$ \;
  $i \gets i+1$ \;
 }
 \Return $(I_i)_{i \in \{1, \cdots, n\}}$, $(f_i)_{i \in \{1, \cdots, n\}}$. 
 \caption{Construction of approximants}
\label{alg:fnsints}
\end{algorithm}
\begin{figure}[h!]
  \centering
    \includegraphics[width=0.7\textwidth]{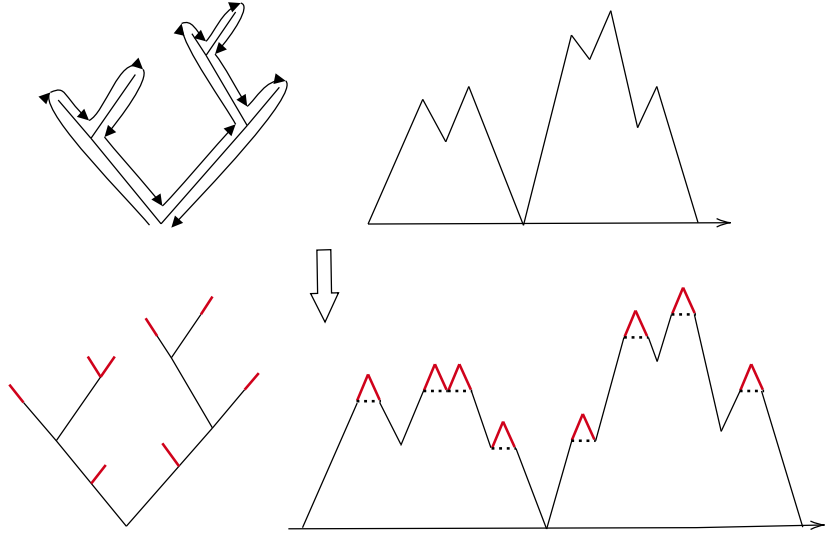}
  \caption{Starting from a tree $T^{a/2^k}$ (black) we construct the Dyck path around it in the first step. Then, we look at $T^{a/2^{k+1}}$ which leads to the addition of intervals (dotted), and a correction of the function at the $k$th step $f_k$ (which is the function depicted in black, extended linearly over the new intervals). We can further define a function by pasting the Dyck paths of the forest over the corresponding leaves, which leads to the function depicted in the second step (red and black).}
\label{fig:functionconstruction}
\end{figure}
For an infinite tree, it suffices to show that the sequence generated by this algorithm converges in the Gromov-Hausdorff sense to an interval of finite length $I$ and that $(f_i)_i$ converge in $L^\infty(I)$ to some function $f$.

\subsubsection*{End of the proof}
To get the desired convergence we must show the two following lemmata.
\begin{lemma}
\label{lemma:goal1}
If $T$ is a compact $\R$-tree of finite upper-box dimension, there exist $a$ and $\lambda$ such that $I$ defined by the construction above has finite length.
\end{lemma}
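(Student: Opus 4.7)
The plan is to bound $|I|$ layer-by-layer, convert each layer's length to a metric length in $T$, and then show that the resulting series converges geometrically for a suitable choice of $a$ and $\lambda$. Since the Dyck path around any finite subforest $F \subset T$ traverses each edge exactly twice, $|\phi(F)| = 2 L(F)$, where $L(F)$ denotes the total metric length of $F$. The scaling operation $\phi_{\lambda^i}$ multiplies all lengths by $\lambda^i$, so
\begin{equation*}
|I| \;=\; 2L(T^a) + 2\sum_{i \geq 1} \lambda^i \, L\!\left(\,\overline{T^{a/2^{i+1}}\setminus T^{a/2^i}}\,\right) \;\leq\; 2L(T^a) + 2\sum_{i \geq 1} \lambda^i \, L\!\left(T^{a/2^{i+1}}\right).
\end{equation*}
The task thus reduces to bounding $L(T^\veps)$ by a negative power of $\veps$ controlled by $\updim T$.

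The crucial geometric step is to show $N_\veps \leq N(T, \veps)$, where $N_\veps$ denotes the number of leaves of the $\veps$-trimming $T^\veps$ and $N(T, \veps)$ is the covering number at scale $\veps$. Each leaf $\tau$ of $T^\veps$ satisfies $h(\tau) = \veps$, so the subtree of $T$ sitting above $\tau$ realises its height and contains a point $\sigma_\tau$ with $d(\tau, \sigma_\tau) = \veps$. Because $T$ is an $\R$-tree and distinct leaves of $T^\veps$ lie on distinct branches, the unique geodesic from $\sigma_{\tau_1}$ to $\sigma_{\tau_2}$ must descend to $\tau_1$, travel to $\tau_2$, and then ascend to $\sigma_{\tau_2}$, yielding $d(\sigma_{\tau_1}, \sigma_{\tau_2}) = 2\veps + d(\tau_1, \tau_2) \geq 2\veps$. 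Thus $\{\sigma_\tau\}$ is $2\veps$-separated and cannot be covered by fewer than $N_\veps$ balls of radius $\veps$. By definition of $\updim T$, for any $\delta > 0$ one has $N(T, \veps) \leq C_\delta \veps^{-\updim T - \delta}$ for all small enough $\veps$, hence $N_\veps \leq C_\delta \veps^{-\updim T - \delta}$.

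A layer-cake argument then bounds $L(T^\veps)$ by $\mathrm{diam}(T) \cdot N_\veps$: at any height $r \geq 0$, each point of $T^\veps$ at distance $r$ from the root roots a disjoint subtree of $T^\veps$ containing at least one leaf, so $\#\{\tau \in T^\veps : d(O,\tau) = r\} \leq N_\veps$, and integrating over $r \leq \mathrm{diam}(T)$ yields the bound. Substituting gives
\begin{equation*}
|I| \;\leq\; 2 L(T^a) + 2 C'_\delta \, a^{-\updim T - \delta} \cdot 2^{\updim T + \delta} \sum_{i \geq 1}\bigl(\lambda \cdot 2^{\updim T + \delta}\bigr)^i,
\end{equation*}
which is finite provided $\lambda < 2^{-(\updim T + \delta)}$ and $a$ is small enough for the covering bound to apply. (This threshold also accounts for the $\tfrac{1}{\updim T + \delta}$-H\"older regularity claimed in theorem \ref{thm:functionfromtree}, since a contour of total length $|I|$ is parametrised by a $1/(\updim T + \delta)$-H\"older function once lengths in layer $i$ are $\lambda^i$-scaled.)

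The main obstacle is the leaf-counting step $N_\veps \leq N(T, \veps)$: it relies on the $\R$-tree axioms (no embedded circle, unique geodesics) to force the subtrees above different leaves to be disjoint, so that their witnesses end up $2\veps$-apart. Once this packing estimate is in place, the remainder is routine bookkeeping: the contour-equals-twice-length identity, a one-line slicing argument for $L(T^\veps)$, and convergence of an explicit geometric series.
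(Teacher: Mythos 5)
Your proof is correct and shares the paper's overall strategy: control the length of each layer of the composition through the number of leaves of the trimmed trees, bound that number polynomially in $1/\veps$ via the upper-box dimension, and choose $\lambda$ so that the resulting series is geometric. The details differ in two places. First, where the paper simply invokes the inequality $N^\veps \leq \mathcal{N}_T(\veps/2)$ from the proof of theorem \ref{thm:Lfandupboxdim}, you reprove it by exhibiting, above each leaf of $T^\veps$, a witness point at distance $\veps$ and checking that witnesses over distinct leaves are more than $2\veps$ apart; this packing argument is sound (the meet of two such witnesses lies below both leaves because the subtrees above distinct leaves of $T^\veps$ are disjoint) and has the virtue of making the lemma self-contained. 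Second, the paper bounds the $k$-th layer by lemma \ref{lemma:combinatoriallemma} together with the observation that branch lengths inside $\overline{T^{a/2^k}\setminus T^{a/2^{k-1}}}$ are at most $a/2^k$, obtaining a bound of the form $4\lambda^k (a/2^k)^{1-\updim T-\delta}$ and hence the threshold $\lambda < 2^{1-\updim T-\delta}$; you instead use that the contour has length twice the total length and the cruder slicing bound $\lambda(T^\veps)\leq \mathrm{diam}(T)\, N^\veps$, which loses the small factor $a/2^k$ and forces the stricter choice $\lambda < 2^{-(\updim T+\delta)}$. Since the lemma only asserts the existence of some admissible $a$ and $\lambda$, and since the variation/H\"older statement of theorem \ref{thm:functionfromtree} is obtained from Picard's theorem (theorem \ref{thm:PicardThm}) rather than from the particular value of $\lambda$, this loss is harmless; your closing parenthetical deriving the H\"older exponent directly from the layer scaling is the only hand-wavy step, but it is not needed for the lemma itself.
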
  
We need to show the convergence of the corresponding functions $(f_n)_n$. This can be done by proving that the sequence is Cauchy. 
\begin{lemma}
\label{lemma:fnisCauchy}
Given the definition of functions $f_n$ above, then the sequence $(f_n)_{n \in \N^*}$ is Cauchy in $C^0(I)$, we have
\be
\norm{f_n - f_m}_{C^0} \leq a2^{-(n \wedge m)}
\ee 
for any $n$ and $m \in \N^*$. 
\end{lemma}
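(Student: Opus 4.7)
The plan is a telescoping argument reducing the lemma to the single-step bound
\[
\norm{f_{i+1} - f_i}_{C^0(I_{i+1})} \;\leq\; \frac{a}{2^{i+1}}\,,
\]
with $f_i$ on the left having been pushed from $I_i$ to $I_{i+1}$ by the $\sigma$-operations of algorithm \ref{alg:fnsints}; by the remark following the definition of $\sigma_I$, this push-forward is simply the constant extension across each newly inserted subinterval. Granted this per-step estimate, the triangle inequality and the geometric series give, for $m \leq n$,
\[
\norm{f_n - f_m}_{C^0} \;\leq\; \sum_{i=m}^{n-1}\frac{a}{2^{i+1}} \;\leq\; \frac{a}{2^m} \;=\; \frac{a}{2^{n\wedge m}}\,,
\]
and $n \leq m$ follows by symmetry.

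Reading $f_{i+1} = f_i + f$ off the algorithm, where $f$ is the output of $\eta_{I_{i+1}}$ applied to the rescaled Dyck path of the forest $F_i := \overline{T^{a/2^{i+1}} \setminus T^{a/2^i}}$, one observes that $f$ is supported on the newly inserted subintervals (those carrying the components $T'$ of $F_i$) and vanishes outside, where the constant $\sigma$-extension of $f_i$ already agrees with $f_{i+1}$. Therefore
\[
\norm{f_{i+1} - f_i}_{C^0(I_{i+1})} \;=\; \sup_{T'\subset F_i}\,\max f_{T'}\,,
\]
with the inner maximum equal to the height of $T'$ from its attachment point $\tau_0 \in T^{a/2^i}$ (the $x$-rescaling by $\lambda^{i+1}$ does not affect the range of $f_{T'}$).

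The geometric core is the claim: every component $T'$ of $F_i$ has height at most $a/2^{i+1}$ from $\tau_0$. Since $\tau_0$ lies on the boundary of $T^{a/2^i}$ along the branch entering $T'$, $h(\tau_0) = a/2^i$ in that direction; every leaf $\tau$ of $T'$ satisfies $h(\tau) = a/2^{i+1}$ by definition of $T^{a/2^{i+1}}$, so there is a leaf $\ell^\ast$ of $T$ above $\tau$ with $d(\tau,\ell^\ast) = a/2^{i+1}$. The tree-geodesic from $\tau_0$ to $\ell^\ast$ runs through $\tau$, hence
\[
d(\tau_0,\tau) \,+\, \frac{a}{2^{i+1}} \;=\; d(\tau_0,\tau) + d(\tau,\ell^\ast) \;=\; d(\tau_0,\ell^\ast) \;\leq\; h(\tau_0) \;=\; \frac{a}{2^i}\,,
\]
so $d(\tau_0,\tau) \leq a/2^{i+1}$; taking the supremum over leaves of $T'$ gives the claim.

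The main obstacle is essentially bookkeeping: one must track which interval each $f_j$ lives on as the $\sigma$-extensions successively enlarge the domains $I_j \hookrightarrow I_{j+1} \hookrightarrow \cdots \hookrightarrow I_n$, and verify that the constancy of these extensions makes $f_{i+1} - f_i = f$ hold on all of $I_{i+1}$ and not merely on the image of $\sigma_{I_i}$. Once this is in place, the height estimate follows essentially by definition of the trimming $T^\veps$, and the geometric-series summation delivers the advertised constant $a \cdot 2^{-(n\wedge m)}$ with no further work.
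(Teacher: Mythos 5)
Your proof is correct and takes essentially the same route as the paper's: both rest on the observation that, after the constant $\sigma$-extensions, $f_n$ and $f_m$ agree except over the newly inserted intervals, where the difference is the Dyck path of a trimming-difference forest whose height is controlled by the definition of $T^\veps$. The only difference is organizational — the paper bounds $\norm{f_n-f_m}_\infty$ in one shot by the height $a(2^{-n}-2^{-m})$ of $\overline{T^{a/2^m}\setminus T^{a/2^n}}$ above its attachment points, whereas you telescope the same height estimate step by step and sum the geometric series, arriving at the same constant.
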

By completeness of $C^0$, the sequence $(f_n)_{n \in \N^*}$ uniformly converges to a continuous function $f$. By virtue of stability theorem for trees (theorem \ref{thm:diagstreesqiso}) it follows that $T$ is isometric to $T_f$. Using Picard's theorem (theorem \ref{thm:PicardThm})
\be
\mathcal{V}(f) = \updim T_f = \updim T
\ee
which concludes the proof of theorem \ref{thm:functionfromtree}. 

\begin{proof}[Proof of lemma \ref{lemma:goal1}]
Recall that, according to the proof of theorem \ref{thm:Lfandupboxdim}, the following equality holds for any tree $T$
\be
\limsup_{\veps \to 0} \frac{\log N^\veps}{\log(1/\veps)} \vee 1 \leq \updim T := \al \,.
\ee
Unpacking the definition of the lim sup, for any $\delta >0$ there is a $a>0$ such that for all $\veps < a$, we have that
\be
N^\veps < \veps^{-\al-\delta} \,.
\ee
Let us fix such a $\delta$ and pick $a$ small enough so that the condition above holds. For any $n \in \N^*$, the partial composition of intervals has length
\be
\abs{I_n} = \abs{\phi(T^a)} + \sum_{k=1}^n \abs{\phi_{\lambda^k}(T^{a/2^{k}} \setminus T^{a/2^{k-1}})} \,.
\ee
However, we can bound $\abs{\phi_{\lambda^k}(T^{a/2^{k}} \setminus T^{a/2^{k-1}})}$ by
\begin{align}
\abs{\phi_{\lambda^k}(T^{a/2^{k}} \setminus T^{a/2^{k-1}})} &= \lambda^k \abs{\phi(T^{a/2^{k}} \setminus T^{a/2^{k-1}})} \nonum \\
&\leq \lambda^k \left(\frac{a}{2^k}\right)(4N^{a/2^k})\,,
\end{align}
since on $T^{a/2^{k}} \setminus T^{a/2^{k-1}}$ the distances between the vertices of each tree are at most $a/2^k$ and there are at most $4N^{a/2^k}$ such edges by virtue of lemma \ref{lemma:combinatoriallemma}. Thus,
\be
\abs{\phi_{\lambda^k}(T^{a/2^{k}} \setminus T^{a/2^{k-1}})} < 4\lambda^k \left(\frac{a}{2^k}\right)^{1-\al-\delta} = 4 a^{1-\al-\delta} \left(2^{\al+\delta-1}\lambda\right)^k \,.
\ee
Setting $\lambda < 2^{1-\al-\delta}$ $I_n$ converges to some interval of finite length $I$, since the partial sums $\abs{I_n}$ converge.
\end{proof}

\begin{proof}[Proof of lemma \ref{lemma:fnisCauchy}]
Suppose that $n<m$. It is sufficient to show that on $I_m$ the equality holds, since in all further iterations of the algorithm, the functions $f_n$ and $f_m$ are locally constant over the intervals introduced. By definition of $f_n$, $f_n$ and $f_m$ agree on $I_n$. Outside of this set, $f_n$ is constant and the difference in the $\Linfty$-norm depends only on what happens above $T^{a/2^n}$, thus we can write
\be
\norm{f_n - f_m}_{\Linfty} \leq \norm{f_{T^{a/2^m} \setminus T^{a/2^n}}}_{\Linfty}
\ee
by definition of $f_n$. However, the Dyck path on $T^{a/2^m} \setminus T^{a/2^n}$ can at most reach a height of $a(2^{-n}-2^{-m})< a 2^{-n}$, which finishes the proof.
\end{proof}

\section{Regularity, persistence index and metric properties of trees}
Throughout this section $X$ will be a compact, connected and locally path-connected metric space. On general topological spaces, it is important to specify which homological theory we are using to compute the homology of $X$. For nice enough spaces, this choice has little to no importance, as most homological theories coincide. However, for abstract metric spaces this is no longer necessarily the case. For our purposes, we will always consider the homology of the space $X$ to be its \v{C}ech homology.
\textit{A priori}, this might pose some problems, as \v{C}ech homology does not always satisfy the axioms of a proper homological theory in the sense of Eilenberg-Steenrod. For this to be the case, a sufficient condition is to consider $X$ to be compact and the homology to be taken over a field. These are not the only conditions for which \v{C}ech homology gives rise to a proper homological theory, as in general the exactness axiom might fail, but suffices for our purposes. For more on these technical details, we encourage the reader to consult Eilenberg's book \cite[Chapter 7]{Eilenberg_1952}.
\begin{remark}
If we wish to consider higher degrees of homology over even more general topological spaces where the exactness axiom does indeed fail for the \v{C}ech homology, there are multiple options. We could either consider more elaborate homology theories such as singular homology or strong homology (which fixes the issue with the exactness axiom of \v{C}ech homology), or we could rewrite this paper in cohomological terms and use \v{C}ech cohomology, for which this problem doesn't present itself. 
\end{remark}
With this technicality out of the way, let us now define the main objects which will concern us for the rest of this paper. 
\begin{definition}
Let $X$ be a compact, connected, locally path connected topological space and consider $f: X \to \R$ be a continuous function. The $k$th \textbf{$\Pers_p$-functional of $f$} is
\be
\Pers_p(H_k(X,f)) := \left(\sum_{b \in H_k(X,f)} \ell(b \cap [\inf(f),\sup(f)])^p \right)^{1/p} \,,
\ee
where $\ell(b)$ denotes the length of the bar $b$ and $H_k(X,f)$ denotes the $H_k$-barcode (or diagram) stemming from the superlevel filtration. Abusing the notation, we will denote $\Pers_p(f) := \Pers_p(H_0(X,f))$. If we further assume that there exists $n$ such that for all $m > n$, $H_m(X) =0$, we define the \textbf{total $\Pers_p$ functional of $f$} as
\be
\text{TPers}_p(f) := \sum_{k=0}^n \Pers_p(H_k(X,f)) \,.
\ee
\end{definition}

\begin{definition}
\label{def:PersIndex}
Let $f: X \to \R$ be a continuous function. The \textbf{$k$th-persistence index of $f$} is defined as
\be
\Lag_k(f) := \inf\{p \geq 1 \, \vert \, \Pers_p(H_k(X,f)) < \infty \} \,.
\ee
We will sometimes write $\Lag(f) := \Lag_0(f)$. Provided that higher degrees of homology identically vanish, we may also talk about the \textbf{total persistence index of $f$}, defined as
\be
\Lag_{Tot}(f) := \inf\{p \geq 1 \, \vert \, \sum_k \Pers_p(H_k(X,f))  <\infty\} \,.
\ee
\end{definition}

\subsection{1D case: a connection with the $p$-variation}

\begin{definition}
Let $f: [0,1] \to \R$ be a continuous function. The \textbf{true $p$-variation of $f$} is defined as
\be
\norm{f}_{p-\text{var}} := \left[\sup_D \sum_{t_k \in D} \abs{f(t_{k}) - f(t_{k-1})}^p\right]^{1/p} \,,
\ee
where the supremum is taken over all finite partitions $D$ of the interval $[0,1]$.
\end{definition} 
\begin{remark}
We talk about \textit{true} $p$-variation to make the distinction with the notion of variation typically considered in probabilistic contexts (more precisely, stochastic calculus), where instead of the supremum over all partitions, we have a probable limit as the mesh of the partition considered tends to zero. 
\end{remark}

\begin{proposition}[Picard, \S 3 \cite{Picard:Trees}]
\label{thm:PicardThm}
Let $f: [0,1] \to \R$ be a continuous function, then $\norm{f}_{p-\text{var}}$ is finite as soon as $\Pers_p(f)$ is finite. In fact, for any $p$
\be
\norm{f}_{\pvar}^p  \leq 2\Pers_p^p (f)^p \,.
\ee
Furthermore, if $\norm{f}_{(p-\delta)\text{-var}}$ is finite for some $\delta >0$, $\Pers_p(f)$ is also finite.
\label{thm:lpsameaspvar}
\end{proposition}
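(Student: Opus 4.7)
The plan is to prove both directions via a combinatorial analysis of the alternating extremum sequence of $f$, exploiting the Elder rule pairing that identifies bars with pairs of adjacent extrema in this sequence.

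For the inequality $\norm{f}_{\pvar}^p \leq 2 \Pers_p^p(f)$, I fix an arbitrary partition $D = \{t_0 < \cdots < t_n\}$ and observe that whenever $f$ is monotone on $[t_{k-1}, t_{k+1}]$, deleting $t_k$ only increases $\sum_k |f(t_k) - f(t_{k-1})|^p$, thanks to $(a+b)^p \geq a^p + b^p$ for $a, b \geq 0$ and $p \geq 1$. Thus I can restrict attention to partitions whose values $v_k := f(t_k)$ alternate between local minima and maxima of $f$. I then proceed by induction on the number of bars. By the Elder rule, the shortest bar is carried by two consecutive extrema $v_j, v_{j+1}$, and a case analysis on whether $v_j$ is a min or a max forces the surrounding configuration $v_{j-1} < v_{j+1} < v_j < v_{j+2}$ (up to reflection). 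Setting $u = v_{j+1} - v_{j-1}$, $\ell = v_j - v_{j+1}$, $w = v_{j+2} - v_j$, the change in the partition sum upon removing $v_j, v_{j+1}$ is
\[
(\ell + u)^p + \ell^p + (\ell + w)^p - (\ell + u + w)^p.
\]
The key elementary estimate $(\ell+u)^p + (\ell+w)^p \leq (\ell+u+w)^p + \ell^p$, valid for $u, w, \ell \geq 0, p \geq 1$, is proved by noting that $g(u,w) := (\ell+u+w)^p + \ell^p - (\ell+u)^p - (\ell+w)^p$ vanishes at the origin and has non-negative partial derivatives. This shows the change is bounded above by $2\ell^p$, while the sum $\sum_i \ell_i^p$ decreases by exactly $\ell^p$. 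Since the reduced alternating sequence has persistence diagram equal to the original one minus this single bar, induction yields $\sum_k|v_k - v_{k-1}|^p \leq 2\Pers_p^p(f)$.

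For the converse direction, suppose $\norm{f}_{(p-\delta)\text{-var}} < \infty$. The persistence diagram decomposes into the ``root bar'' of length $\mathrm{osc}(f) := \sup f - \inf f$, plus one bar of length $\ell_v$ per non-root leaf $v$ of $T_f$, equal to the length of the edge from $v$ to its parent junction. The walk traced by $f$ on $T_f$ visits each non-root leaf $v$ through one ascending monotone piece terminating at $v$ and one descending monotone piece starting at $v$, each of total absolute increment $\geq \ell_v$; moreover these pieces are pairwise distinct across different leaves since each monotone piece is determined by its terminating local extremum. Consequently, for every $q \geq 1$,
\[
\sum_{v \text{ non-root}} 2 \ell_v^q \leq \sum_{\text{monotone pieces}} |\Delta f|^q = \norm{f}_{q\text{-var}}^q.
\]
Specializing to $q = p - \delta$ and using $\ell_v \leq \mathrm{osc}(f)$, I get
\[
\sum_v \ell_v^p \leq \mathrm{osc}(f)^\delta \sum_v \ell_v^{p-\delta} \leq \tfrac{1}{2} \mathrm{osc}(f)^\delta \norm{f}_{(p-\delta)\text{-var}}^{p-\delta} < \infty,
\]
whence $\Pers_p^p(f) = \mathrm{osc}(f)^p + \sum_v \ell_v^p < \infty$.

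The main obstacle is the combinatorial step underpinning the first direction: verifying that the shortest bar always arises from two adjacent extrema in the alternating sequence, that peeling them off leaves the pairing of the remaining bars undisturbed, and that the surrounding configuration has the requisite monotone ordering $v_{j-1} < v_{j+1} < v_j < v_{j+2}$. This relies on the local character of the Elder rule merge criterion under the superlevel sweep and involves a case analysis on whether $v_j$ is a max or a min, along with the boundary situations in which the shortest bar is adjacent to an endpoint of the sequence. Once this structural fact is in place, the displayed elementary inequality delivers the sharp constant $2$ automatically, and the converse then follows cleanly from the walk--on--$T_f$ accounting.
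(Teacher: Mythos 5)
A preliminary remark: the paper does not prove this proposition at all --- it is quoted from Picard (\S 3 of the cited reference), whose own argument runs through counting the leaves $N^\veps$ of the $\veps$-trimmed tree against $\veps$-oscillations of $f$. So your attempt must be judged on its own terms, and as written it has genuine gaps in both directions.

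For the first inequality, your induction peels off adjacent pairs that are bars of the \emph{finite alternating sequence} (equivalently, of its piecewise-linear interpolation), but you charge them against bars of $f$. These are not the same objects: subsampling changes the Elder pairing. For instance, sample values $10,1,5,2,6$ at points which are all genuine local extrema of $f$, with $f$ dipping to $0$ between the points of values $5$ and $2$; the sequence pairs $(5,2)$, a bar of length $3$, while the diagram of $f$ contains no such bar (the summit $5$ is paired with $1$, and the extra bars created by the dip have different endpoints). So the sentence ``the reduced alternating sequence has persistence diagram equal to the original one minus this single bar, induction yields $\sum_k\abs{v_k-v_{k-1}}^p\le 2\Pers_p^p(f)$'' silently substitutes the diagram of $f$ for the diagram of the sequence. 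What you actually need is a domination lemma: an injection from the bars of the subsampled sequence into the bars of $f$ which does not decrease lengths. That statement is true, but it is the real content of this direction and is absent. Separately, the boundary case does break your bookkeeping, not just complicate it: removing an end pair can delete the global minimum, so the essential bar shrinks and the partition sum drops by $\ell^p+(\ell+w)^p$, which is not $\le 2\ell^p$; ``diagram of the reduced sequence $=$ original minus one bar'' fails there and a different charging is required.

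For the converse, the per-leaf accounting is false as stated. Take turning values $1,9,2,5,3,4,0,10$: the leaf with summit $5$ carries the bar $[5,2)$ of length $3$, yet the descending monotone piece starting at $5$ only reaches $3$, a decrement of $2<3$. What is true is that on the excursion interval (the component of $\{f>d_v\}$ containing the summit) $f$ rises and falls by exactly $\ell_v$; but these excursions are nested across leaves, so the corresponding increments overlap and cannot all be realised by a single partition, and the identity $\sum_{\text{monotone pieces}}\abs{\Delta f}^q=\norm{f}_{q\text{-var}}^q$ you invoke is false for $q>1$ (a coarse partition can beat the turning-point partition); only the inequality $\le$ holds, which breaks your chain leading to $2\sum_v\ell_v^q\le\norm{f}_{q\text{-var}}^q$, an inequality you have not established (and whose constant is delicate --- the staircase example with summits $k/n$ shows it is essentially tight). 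Moreover a general continuous $f$ has no monotone pieces, so an approximation step would be needed in any case. The standard repair, which is essentially Picard's route, is a counting argument: if $f$ has $N^\veps$ bars of length $\ge\veps$, the laminar structure of the excursions (using that a nested excursion has lower summit) produces one partition with at least $N^\veps$ increments of size $\ge\veps$, hence $N^\veps\le\veps^{-q}\norm{f}_{q\text{-var}}^q$; then $\Pers_p^p(f)=p\int_0^{\mathrm{osc}(f)}\veps^{p-1}N^\veps\,d\veps$ is finite whenever $p>q=p-\delta$, which is also why the $\delta$ loss in the statement is natural.
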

In fact, Picard showed that on the interval $[0,1]$, the persistence index of $f$ is linked to the regularity of $f$.
\begin{theorem}[Picard, \S 3 \cite{Picard:Trees}]
Let $f: [0,1] \to \R$ be a continuous function and denote
\be
\mathcal{V}(f) := \inf\{p \, \vert \, \norm{f}_{p\text{-var}} <\infty\} \,.
\ee
Then,
\be
\mathcal{V}(f) = \Lag(f)  = \limsup_{\veps \to 0} \frac{\log(\lambda(T_f^\veps)/\veps)}{\log(1/\veps)} +1 = \limsup_{\veps \to 0} \frac{\log N^\veps}{\log(1/\veps)} \vee 1= \updim T_f
\ee
where $a \vee b := \max\{a,b\}$, $N^\veps$ is the number of leaves of the $\veps$-trimmed tree $T_f^\veps$, $\lambda(T_f^\veps)$ denotes the length of $T_f^\veps$ and $\updim$ denotes the upper-box dimension.
\end{theorem}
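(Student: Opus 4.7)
The plan is to prove the chain of equalities by working with the non-increasing sequence $\ell_1 \geq \ell_2 \geq \cdots$ of bar lengths of $H_0(X,f)$, which by Theorem \ref{thm:TfcalulatesH0} corresponds bijectively to the branches of $T_f$ catalogued by Algorithm \ref{alg:barcodefromtree}. The key combinatorial identity is that $N^\veps = \#\{i : \ell_i \geq \veps\}$, from which layer-cake gives the two master formulas
$$\Pers_p^p(f) = p\int_0^\infty \veps^{p-1} N^\veps \, d\veps \qquad \text{and} \qquad \lambda(T_f^\veps) = \sum_{\ell_i \geq \veps}(\ell_i - \veps)= \int_\veps^\infty N^u \, du.$$

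The first equality $\mathcal{V}(f) = \Lag(f)$ is a direct consequence of Proposition \ref{thm:lpsameaspvar}: the inequality $\norm{f}_{\pvar}^p \leq 2\Pers_p^p(f)$ gives $\mathcal{V}(f) \leq \Lag(f)$, and the reverse implication stated in that proposition gives $\Lag(f) \leq \mathcal{V}(f)+\delta$ for every $\delta>0$. Setting $\al := \limsup_{\veps\to 0}\log N^\veps/\log(1/\veps)$, a standard tail-of-integral analysis of the first master formula shows $\Pers_p(f)<\infty$ iff $p>\al$, with the $\vee\,1$ coming from the constraint $p\geq 1$ built into the definition of $\Lag$. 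This delivers $\Lag(f)=\al\vee 1$ and, via the second master formula, the $\lambda(T_f^\veps)/\veps$ characterisation: when $\al>1$ one gets the power-law asymptotic $\lambda(T_f^\veps)\asymp \veps^{1-\al}$, and when $\al\leq 1$ the length remains bounded, producing the baseline contribution of $1$ after the appropriate normalisation.

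The main obstacle is the identification $\al\vee 1 = \updim T_f$, which I would obtain from a two-sided estimate on the $\veps$-covering number $N_{T_f}(\veps)$ of $T_f$. For the upper bound, split the cover into (a) the trimmed-off part $T_f\setminus T_f^\veps$, which is a disjoint union of at most $N^\veps$ subtrees of diameter $\leq \veps$ by definition of the height function, hence needs $O(N^\veps)$ balls; and (b) the finite $\R$-tree $T_f^\veps$ of total length $\lambda(T_f^\veps)$, which one covers by walking along each of its $N^\veps$ branches in $\veps$-steps, for a total of $O(\lambda(T_f^\veps)/\veps + N^\veps)$ balls. A matching lower bound comes from a separated-set argument along any individual branch. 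Inserting the master formula, one obtains $N_{T_f}(\veps)\asymp \veps^{-(\al\vee 1)}$, and the upper-box dimension follows. The subtle point is avoiding double-counting near branching vertices and verifying that the connected components of $T_f\setminus T_f^\veps$ really have diameter bounded by $\veps$, which uses that $T_f$ is a compact $\R$-tree so that the geodesic structure meshes correctly with the height filtration.
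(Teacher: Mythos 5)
First, a point of comparison: the paper does not prove this statement at all --- it is imported verbatim from Picard \cite{Picard:Trees} and then used as a black box (notably in the proof of theorem \ref{thm:Lfandupboxdim}, which reduces the general case to this 1D result). So your proposal cannot be matched against a proof in the paper; it has to stand alone, and in outline it does. The identification of bars with branches via theorem \ref{thm:TfcalulatesH0}, the two layer-cake identities $\Pers_p^p(f)=p\int_0^\infty\veps^{p-1}N^\veps\,d\veps$ and $\lambda(T_f^\veps)=\int_\veps^\infty N^u\,du$, the deduction $\Lag(f)=\al\vee 1$ with $\al:=\limsup_\veps \log N^\veps/\log(1/\veps)$ (using $\Pers_p^p(f)\geq N^\veps\veps^p$ for the divergence when $p<\al$), the use of proposition \ref{thm:lpsameaspvar} for $\mathcal{V}(f)=\Lag(f)$, and the covering-number comparison $\NN_{T_f}(\veps)\approx \lambda(T_f^\veps)/\veps + N^\veps$ for $\updim T_f$ are exactly the right ingredients, and they mirror the estimates the paper itself deploys for the liminf statements in theorem \ref{thm:Lfandupboxdim}.

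Two steps are, however, misstated and need repair. (i) $T_f\setminus T_f^\veps$ is \emph{not} a disjoint union of at most $N^\veps$ subtrees: infinitely many small subtrees may hang off a single branch of $T_f^\veps$ (a ``caterpillar''), so counting components is the wrong quantity. What is true, and what the upper bound needs, is that every component has height $<\veps$ and its closure meets $T_f^\veps$ at its attachment point, so $T_f$ lies in the $\veps$-neighbourhood of $T_f^\veps$; covering the finite tree $T_f^\veps$ by $O(\lambda(T_f^\veps)/\veps+N^\veps)$ balls and doubling radii then covers $T_f$. (ii) A separated set ``along any individual branch'' only yields $\updim T_f\geq 1$; to get $\updim T_f\geq\al$ you must place one point at height roughly $\veps$ above \emph{each} leaf of $T_f^\veps$: two such points lie in disjoint subtrees, so their geodesic passes through both leaves and they are at mutual distance at least about $2\veps$, whence $N^\veps\leq\NN_{T_f}(\veps/2)$. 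Relatedly, limsup hypotheses never give the two-sided power laws you assert ($\lambda(T_f^\veps)\asymp\veps^{1-\al}$, $\NN_{T_f}(\veps)\asymp\veps^{-(\al\vee1)}$); only the limsup exponents hold, which is all you need since $\updim$ is itself a limsup. Finally, your (correct) asymptotics expose a transcription error in the displayed statement: with $\lambda(T_f^\veps)/\veps$ inside the logarithm, the external $+1$ double counts (for Brownian motion the middle expression would give $3$ while all the others equal $2$); the formula should be read either as $\limsup_\veps \log(\lambda(T_f^\veps)/\veps)/\log(1/\veps)$ without the $+1$, or as $\limsup_\veps \log(\lambda(T_f^\veps))/\log(1/\veps)+1$, and it is these corrected versions that your argument, once (i) and (ii) are fixed, actually proves.
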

\begin{remark}
More generally, we can define $\lambda$ as the unique atomless Borel measure on $T_f$ characterized by the fact that the measure of a geodesic is given by the length of the geodesic \cite{Picard:Trees}.
\end{remark}

\subsection{More general spaces}
\subsubsection{Connected, locally path-connected, compact topological spaces}
\begin{theorem}
\label{thm:Lfandupboxdim}
Let $X$ be a connected, locally path-connected, compact topological space and let $f: X \to \R$ be a continuous function. With the same notation as above and supposing that $\updim T_f$ is finite, the following chain of equalities holds
\be
\Lag(f)  = \limsup_{\veps \to 0} \frac{\log N^\veps}{\log(1/\veps)} \vee 1 =\limsup_{\veps \to 0} \frac{\log(\lambda(T_f^\veps)/\veps)}{\log(1/\veps)} +1 = \updim T_f \,.
\ee
Furthermore,
\be
\liminf_{\veps \to 0} \frac{\log N^\veps}{\log(1/\veps)} \vee 1  \leq \downdim T_f \leq \liminf_{\veps \to 0} \frac{\log(\lambda(T_f^\veps)/\veps)}{\log(1/\veps)} +1\,,
\ee
where $\downdim$ is the lower-box dimension. For $\downdim T_f >1$, these inequalities turn into equalities if either:
\be
\limsup_{\veps \to 0} \frac{N^{2\veps}}{N^\veps} <1 \quad \text{or} \quad \limsup_{\veps \to 0} \frac{\lambda(T_f^{2\veps})}{\lambda(T_f^\veps)} <1 \,.
\ee
\end{theorem}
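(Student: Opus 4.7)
The strategy is to reduce everything to a statement about the tree $T_f$ alone and then invoke Picard's theorem. By Theorem \ref{thm:TfcalulatesH0}, the persistence modules $H_0(X,f)$ and $H_0(T_f,\ell)$ are canonically isomorphic, where $\ell$ is the root-distance function on $T_f$, so $\Pers_p(H_0(X,f)) = \Pers_p(H_0(T_f,\ell))$ and $\Lag(f)$ depends only on the abstract metric tree $T_f$. Since $\updim T_f < \infty$, Theorem \ref{thm:functionfromtree} produces a continuous $g:[0,1]\to\R$ with $T_g$ isometric to $T_f$. Applying Theorem \ref{thm:TfcalulatesH0} to $g$ yields $H_0([0,1],g) \cong H_0(X,f)$, hence $\Lag(f) = \Lag(g)$, while the quantities $N^\veps$ and $\lambda(T_g^\veps)$ coincide with those of $T_f$ through the isometry. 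The first chain of $\limsup$ equalities then follows directly from Picard's theorem (Theorem \ref{thm:PicardThm}) applied to $g$ on $[0,1]$.

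For the $\liminf$ inequalities Picard has no direct analogue, so I would argue from the box-counting definition of $\downdim T_f$ via a two-sided estimate of the $\veps$-covering number of $T_f$. For the lower bound, distinct leaves of $T_f^\veps$ are pairwise at distance at least $\veps$ by the very definition of an $\veps$-trimming, so any $\veps$-cover of $T_f$ has size at least $N^\veps$. For the upper bound, covering each geodesic arc of $T_f^\veps$ by balls of radius $\veps$ spaced $\veps$ apart uses at most $\lambda(T_f^\veps)/\veps + N^\veps$ balls, and the branches of $T_f \setminus T_f^\veps$, having height below $\veps$, are absorbed into neighbouring balls after a bounded inflation of the radius. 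Combined with the elementary bound $N^\veps \leq \lambda(T_f^\veps)/\veps$ (each leaf sits at the end of an arc of length $\geq \veps$), dividing logs by $\log(1/\veps)$ and passing to $\liminf$ yields the stated inequalities: the $+1$ shift is the contribution of the $1/\veps$ factor inside the log, and the $\vee 1$ absorbs the degenerate regime where $T_f$ is essentially a single geodesic.

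For the sharpness assertion under the doubling hypotheses, the plan is to show that either condition forces $N^\veps$ and $\lambda(T_f^\veps)/\veps$ to coincide up to multiplicative constants, so that their $\liminf$s agree and the sandwich of the previous paragraph collapses to an equality. Assuming $\limsup N^{2\veps}/N^\veps < \rho < 1$, I would write $\lambda(T_f^\veps)$ as a telescoping sum over dyadic scales between $\veps$ and a fixed threshold $\veps_0$ and bound each difference $\lambda(T_f^{2^{-(k+1)}\veps_0}) - \lambda(T_f^{2^{-k}\veps_0})$ by $2^{-k}\veps_0 \cdot N^{2^{-(k+1)}\veps_0}$, since each new leaf contributes a branch of height at most $2^{-k}\veps_0$. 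The doubling hypothesis renders this sum geometric and yields $\lambda(T_f^\veps) \asymp \veps N^\veps$; consequently the two logarithmic quantities differ by $o(\log(1/\veps))$ and their $\liminf$s coincide. The condition $\downdim T_f > 1$ ensures that $\log N^\veps/\log(1/\veps)$ eventually exceeds $1$, deactivating the $\vee 1$ correction; the hypothesis $\limsup \lambda(T_f^{2\veps})/\lambda(T_f^\veps) < 1$ is handled symmetrically. The main obstacle I anticipate is the covering-number upper bound, specifically verifying that the potentially infinite family of short branches outside $T_f^\veps$ can indeed be absorbed by a constant-factor inflation of the balls covering $T_f^\veps$ without inflating the count beyond $O(\lambda(T_f^\veps)/\veps + N^\veps)$; everything else is dimensional bookkeeping once Picard's theorem and the tree reduction are in hand.
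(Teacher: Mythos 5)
Your overall architecture is the same as the paper's: for the $\limsup$ chain you reduce to the abstract tree via theorem \ref{thm:TfcalulatesH0}, realize $T_f$ as $T_g$ for some $g:[0,1]\to\R$ via theorem \ref{thm:functionfromtree}, and quote Picard's theorem; for the $\liminf$ part you run a two-sided covering estimate, exactly as the paper does. However, two of your elementary supporting claims are false as stated. First, distinct leaves of $T_f^\veps$ are \emph{not} pairwise at distance $\geq\veps$: two branches of length $\veps+\delta_1$ and $\veps+\delta_2$ emanating from the same branch point produce two leaves of $T^\veps$ at distance $\delta_1+\delta_2$, which is arbitrarily small. What is true (and what the paper uses) is that the subtrees of $T_f$ hanging \emph{above} distinct leaves of $T_f^\veps$ are disjoint and each has height $\veps$, which forces $N^\veps\leq\mathcal{N}_{T_f}(\veps/2)$. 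Second, the ``elementary bound'' $N^\veps\leq\lambda(T_f^\veps)/\veps$ is false (same example: a star of $N$ branches of length $\veps+\delta$ has $N^\veps=N$ but $\lambda(T^\veps)=N\delta$); the correct statement is the scale-shifted $\veps N^{2\veps}\leq\lambda(T_f^\veps)$, which follows from the co-area identity $\lambda(T_f^\veps)=\int_\veps^\infty N^a\,da\geq\int_\veps^{2\veps}N^a\,da$ that the paper takes as its starting point. Both errors are repairable and the factor-of-two change of scale is harmless for the dimension bookkeeping. The absorption step you flag as your main obstacle is actually immediate: if $h(y)<\veps$, walking down to the ancestor of $y$ at distance $\veps-h(y)$ lands in $T_f^\veps$ (since $h$ increases at least linearly along ancestors), so every point of $T_f\setminus T_f^\veps$ lies within $\veps$ of $T_f^\veps$ and doubling the radii of the balls covering $T_f^\veps$ covers all of $T_f$ without adding any ball.

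The genuine gap is in the sharpness assertion. Under $\limsup N^{2\veps}/N^\veps\leq\rho<1$, your telescoping bounds the dyadic increments of $\lambda$ by $2^{-k}\veps_0N^{2^{-(k+1)}\veps_0}$, and after using the ratio hypothesis the resulting sum is geometric with ratio $2\rho$; it converges only when $\rho<1/2$. For $\rho\in[\tfrac12,1)$ your argument yields $\lambda(T_f^\veps)\lesssim\veps N^\veps\,\veps^{-(1+\log_2\rho)}$, so the claimed $\lambda(T_f^\veps)\asymp\veps N^\veps$ does not follow, and the logarithmic quantities may a priori differ by a positive constant rather than by $o(\log(1/\veps))$. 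The paper avoids this by working with the increment itself: $\lambda(T_f^\veps)-\lambda(T_f^{2\veps})=\int_\veps^{2\veps}N^a\,da$ is sandwiched between $\veps N^{2\veps}$ and $\veps N^\veps$, which identifies $\liminf\frac{\log N^\veps}{\log(1/\veps)}$ with $\liminf\frac{\log\left[(\lambda(T_f^\veps)-\lambda(T_f^{2\veps}))/\veps\right]}{\log(1/\veps)}$ unconditionally, and then the hypothesis $\limsup\lambda(T_f^{2\veps})/\lambda(T_f^\veps)<1$ (equivalently, positivity of $\liminf\bigl(1-\lambda(T_f^{2\veps})/\lambda(T_f^\veps)\bigr)$) lets one replace the increment by $\lambda(T_f^\veps)$ itself up to constants. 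Your treatment of the $\lambda$-ratio condition as ``handled symmetrically'' also needs this increment device rather than a symmetric telescoping. So the $\limsup$ chain and the $\liminf$ inequalities are essentially right modulo the two fixable claims above, but the equality case needs to be rerouted through the increment sandwich to cover the whole range $\rho<1$.
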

\begin{remark}
The study of $N^\veps$ is in fact completely equivalent to the study of $\Pers_p^p(f)$. Indeed,
\be
\Pers_p^p(f) = p\int_0^\infty \veps^{p-1} N^\veps \; d\veps \,,
\ee
which is finite as soon as $p > \Lag(f)$. This is nothing other than the Mellin transform of $N^\veps$. By the Mellin inversion theorem, for any $c >\Lag(f)$, we have
\be
N^\veps = \frac{1}{2\pi i} \int_{c-i\infty}^{c+i \infty} \Pers_p^p(f) \,\veps^{-p} \;\frac{dp}{p} \;.
\ee
\end{remark}
\begin{proof}[Proof of theorem \ref{thm:Lfandupboxdim}]
By the procedure detailed in section \ref{sec:inverseproblem}, since $\updim T_f$ is finite we can construct a function $\hat{f} : [0,1] \to \R$ such that $T_f$ and $T_{\hat{f}}$ are isometric. Applying Picard's theorem to $T_{\hat{f}}$ and noting that $\Lag(f)$ depends only on the $T_f$, we have that
\be
\Lag(f)  = \limsup_{\veps \to 0} \frac{\log (\lambda(T_f^\veps)/\veps)}{\log(1/\veps)} +1 = \limsup_{\veps \to 0} \frac{\log N^\veps}{\log(1/\veps)} \vee 1= \updim T_f \,.
\ee 
Let us now show the inequalities for the $\liminf$. Since
\be
\lambda(T_f^\veps) = \int_\veps^\infty N^a \;da \,,
\ee
the following inequality holds
\be
\liminf_{\veps \to 0} \frac{\log N^\veps}{\log(1/\veps)} \vee 1 \leq \liminf_{\veps \to 0} \frac{\log (\lambda(T_f^\veps)/\veps)}{\log(1/\veps)} +1\,.
\ee
Additionally,
\be
N^\veps \leq \NN(\veps/2)
\ee
where $\NN(\veps)$ denotes the minimal number of balls of radius $\veps$ necessary to cover $T_f$. This inequality holds as above each leaf of $T_f^\veps$, at least one ball of radius $\frac{\veps}{2}$ is necessary to cover this section of the tree. It follows that
\be
\label{ineq:liminfNvepsLambdaveps}
\liminf_{\veps \to 0} \frac{\log N^\veps}{\log(1/\veps)} \vee 1 \leq \downdim T_f \,.
\ee
We can bound this minimal number of balls $\NN(\veps)$ by the following
\be
\NN(\veps) \leq N^{\veps/2} + \frac{\lambda(T_f^{\veps/2})}{\veps/2} \leq 2 \; N^{\veps/2} \vee \ceil{\frac{\lambda(T_f^{\veps/2})}{\veps/2}} \,,
\ee 
which holds since, at most $N^\veps$ balls are needed to cover $T_f \setminus T_f^\veps$. To cover $T_f^\veps$, at most: $\ceil{\lambda(T_f^{\veps/2})/(\veps/2)}$ balls are needed, so the inequality above follows by further majorizing the terms. This implies that
\be
\downdim T_f  \leq \left[\liminf_{\veps \to 0} \frac{\log N^\veps}{\log(1/\veps)} \vee 1 \right]\vee \left[ \liminf_{\veps \to 0} \frac{\log (\lambda(T_f^\veps)/\veps)}{\log(1/\veps)} +1 \right] \,,
\ee
but by inequality \ref{ineq:liminfNvepsLambdaveps} this means that
\be
\downdim T_f \leq \liminf_{\veps \to 0} \frac{\log (\lambda(T_f^\veps)/\veps)}{\log(1/\veps)} +1 \,.
\ee
Finally,
\begin{align}
\frac{\lambda(T_f^\veps)- \lambda(T_f^{2\veps})}{\veps} &= \frac{1}{\veps} \left[\int_\veps^\infty N^a \; da - \int_{2\veps}^\infty N^a \; da\right] \nonum\\
&= \frac{1}{\veps} \int_\veps^{2\veps} N^a \; da \leq N^\veps  \,,
\end{align}
since $N^\veps$ is monotone decreasing. This reasoning also gives a lower bound
\be
N^{2\veps} \leq \frac{\lambda(T_f^\veps) - \lambda(T_f^{2\veps})}{\veps} \leq N^\veps \,,
\ee
which entails that
\be
\liminf_{\veps \to 0} \frac{\log N^\veps}{\log(1/\veps)}  = \liminf_{\veps \to 0} \frac{\log \left[\frac{\lambda(T_f^\veps) - \lambda(T_f^{2\veps})}{\veps}\right]}{\log(1/\veps)} \,.
\ee
Suppose that this limit is larger than $1$. Rearranging, we get
\be
\frac{\veps N^{2\veps}}{\lambda(T_f^\veps)}\leq 1-\frac{\lambda(T_f^{2\veps})}{\lambda(T_f^\veps)} \leq \frac{\veps N^\veps}{\lambda(T_f^\veps)} \,,
\ee
from which it follows that if any of these quantities admits a $\liminf$ which is stricly greater than zero, we have 
\be
\liminf_{\veps \to 0} \frac{\log N^\veps}{\log(1/\veps)} = \liminf_{\veps \to 0} \frac{\log \lambda(T_f^\veps)}{\log(1/\veps)} +1 \,.
\ee
Noticing another equivalent condition for the validity of this equality is whether
\be
\limsup_{\veps \to 0} \frac{N^{2\veps}}{N^\veps} <1 \,,
\ee
finishes the proof.
\end{proof}

\begin{remark}
If $\updim = \downdim$, all the limits of the above theorem are well-defined, yielding exact asymptotics for $\lambda(T_f^\veps)$ and $N^\veps$. This is in particular the case if $\updim = \dim_H$, where $\dim_H$ denotes the Hausdorff dimension. 
\end{remark}

The functional $\lambda(T^\veps_f)$ is what some authors \cite{Polterovitch:Persistence, Polterovitch:LaplaceEigenfunctions} refer to as the Banach indicatrix and its asymptotics have a topological interpretation as described in the statement of the theorem. It is interesting to note that the study of the upper-box dimension is natural in the tree approach. Additionally, $\updim$ has also been used in the context of persistent homology by Schweinhart \cite{Schweinhart_2019}, Schweinhart and MacPherson \cite{MacPherson_2012} and by Adams \textit{et al.} \cite{Adams_2020} in a probabilistic setting. 

\subsubsection{LLC metric spaces}
It is possible to further extend Picard's theorem by some rudimentary considerations and by imposing the so-called locally linearly connected condition on $X$. Let us briefly recall the definition of this condition.
\begin{definition}
A \textbf{locally linearly connected (LLC) metric space $(X,d)$}, is a connected metric space such that for all $r>0$ and for all $z \in X$, for all $x,y \in B(z,r)$, there exists an arc connecting $x$ and $y$ such that the diameter of this arc is linear in $d(x,y)$. 
\label{def:LLCspace}
\end{definition}
With this extra assumption, we can prove the following lemmata.
\begin{lemma}[Regularity-dimension]
\label{lemma:RegularityDimension}
Let $X$ be a compact LLC metric space. Keeping the same notations as in theorem \ref{thm:Lfandupboxdim}, the following inequality holds
\be
\Lag(f) = \updim T_f \leq \Ham(f) \updim X \,,
\ee
where:
\be
\Ham(f) := \inf \left\{ \frac{1}{\al} \; \Big\vert \, \exists \lambda \in \Homeo(X)\,, \;\norm{f \circ \lambda}_{C^\al} < \infty\right\}
\ee
\end{lemma}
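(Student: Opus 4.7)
The first equality $\Lag(f)=\updim T_f$ is already supplied by Theorem \ref{thm:Lfandupboxdim}, so the content to prove is the inequality $\updim T_f \leq \Ham(f)\,\updim X$. My plan is to count the leaves $N^\veps$ of the trimmed tree $T_f^\veps$ by converting prominence of $H_0$-features into quantitative metric separation of the corresponding local maxima in $X$; the Hölder regularity gives the value-to-distance conversion, and the LLC hypothesis is what allows path-based arguments to be promoted to metric ones.

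\textbf{Step 1 (Reparametrization).} For every $\lambda \in \Homeo(X)$, paths in $X$ correspond bijectively under $\lambda$, so $d_{f\circ\lambda}(x,y) = d_f(\lambda(x),\lambda(y))$ and $\lambda$ descends to an isometry $T_{f\circ\lambda} \cong T_f$. In particular $\updim T_f$ is invariant under pre-composition with any self-homeomorphism of $X$, so for each $\alpha < 1/\Ham(f)$ I may replace $f$ by a suitable reparametrization and assume $f\in C^\alpha(X,\R)$ with some Hölder constant $C$.

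\textbf{Step 2 (Counting leaves).} By Theorem \ref{thm:Lfandupboxdim},
\[
\updim T_f \;=\; \limsup_{\veps\to 0} \frac{\log N^\veps}{\log(1/\veps)}\vee 1.
\]
Each leaf of $T_f^\veps$ is the tip of a branch of length $\geq \veps$, hence (by the Elder rule, Theorem \ref{thm:TfcalulatesH0}) corresponds to an $H_0$-bar of length $\geq\veps$, and therefore to a local maximum $p\in X$ with the property that the connected component of $X_{\geq f(p)-\veps/2}$ containing $p$ meets no point of strictly greater $f$-value. Given two such prominent maxima $p_1,p_2$ with (say) $f(p_1)\geq f(p_2)$, every path in $X$ from $p_1$ to $p_2$ must therefore pass through a point $q$ with $f(q) < f(p_2) - \veps/2$.

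\textbf{Step 3 (LLC + Hölder gives separation).} Applying the LLC condition with constant $K$, I choose an arc $\gamma$ from $p_1$ to $p_2$ of diameter $\leq K\,d(p_1,p_2)$ and locate on it a point $q$ as above, so that $d(p_2,q)\leq K\,d(p_1,p_2)$. The Hölder bound forces
\[
\tfrac{\veps}{2} \;<\; f(p_2) - f(q) \;\leq\; C\,d(p_2,q)^\alpha \;\leq\; C K^\alpha\, d(p_1,p_2)^\alpha,
\]
hence $d(p_1,p_2)\geq c\,\veps^{1/\alpha}$ for some constant $c=c(C,K)>0$. Lifting leaves of $T_f^\veps$ back to $X$ therefore yields a $c\veps^{1/\alpha}$-separated set, so $N^\veps$ is bounded by the $c\veps^{1/\alpha}$-packing number of $X$. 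By definition of $\updim X$, for any $\eta>0$ and all sufficiently small $\veps$ this packing number is at most $\veps^{-(\updim X+\eta)/\alpha}$ up to constants, so
\[
\frac{\log N^\veps}{\log(1/\veps)} \;\leq\; \frac{\updim X+\eta}{\alpha} + o(1).
\]
Letting $\eta\to 0$ and $\alpha\nearrow 1/\Ham(f)$ gives $\updim T_f \leq \Ham(f)\,\updim X$.

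\textbf{Main obstacle.} The delicate point is Step 2: precisely pinning down the correspondence between tips of $T_f^\veps$ and $\veps$-prominent local maxima of $f$, so that the strict/non-strict inequalities and the birth-vs-death asymmetry of $H_0$-bars do not interfere with the $\veps/2$ safety margin used in Step 3. Once this bookkeeping is done, the LLC hypothesis is doing exactly what is needed — it turns a path-level dip of $f$ into a genuine lower bound on the metric distance $d(p_1,p_2)$, rather than just on the diameter of some ambient region — and the rest of the argument is a routine box-counting estimate.
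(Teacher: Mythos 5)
Your argument is correct and reaches the stated inequality, but it is organized differently from the paper's proof, so let me compare. The paper factors the statement through two lemmas: first (lemma \ref{lemma:fCalpiCal}) that on a compact LLC space an $\al$-H\"older $f$ makes the projection $\pi_f\colon X\to T_f$ itself $\al$-H\"older --- the estimate there is exactly your Step 3 computation, with the LLC arc controlling $f(x)-f(\tau)$ by $C\Lambda\, d(x,y)^\al$ --- and second (lemma \ref{lemma:dimYleqdimX}) that a surjective $\al$-H\"older map inflates upper (and lower) box dimension by at most a factor $1/\al$, via a comparison of covering numbers; your Step 1 reduction by reparametrization is identical to the paper's. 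You instead bypass the regularity of $\pi_f$ and bound $N^\veps$ directly by a packing argument: distinct leaves of $T_f^\veps$ yield points of $X$ that are $c\,\veps^{1/\al}$-separated, and you then invoke the $N^\veps$-characterization of $\updim T_f$ from theorem \ref{thm:Lfandupboxdim} together with proposition \ref{prop:PackingVsCover}. Both routes rest on the same analytic kernel (LLC plus H\"olderness converts a forced dip of size $\veps$ along any connecting path into a metric separation of size $\veps^{1/\al}$); the paper's map-level formulation buys the $\downdim$ statement for free and is reused later --- indeed the proof of lemma \ref{lemma:PerspvepsContinuity} derives precisely your leaf-count bound $N^\veps_f\leq \mathcal{N}_X\bigl((\veps/2K)^{1/\al}\bigr)$ from the H\"olderness of $\pi_f$ --- while your version gives the per-scale bound on $N^\veps$ directly without discussing $\pi_f$. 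One bookkeeping caution in your Step 2 (the point you yourself flag): deducing the forced dip from ``the component of $X_{\geq f(p_2)-\veps/2}$ containing $p_2$ meets no point of strictly greater value'' does not by itself exclude a path staying above $f(p_2)-\veps/2$ in the borderline case $f(p_1)=f(p_2)$; the clean statement is that the subtrees above two distinct leaves $\tau_1\neq\tau_2$ of $T_f^\veps$ are disjoint and merge in $T_f$ at height $f(\tau_1\wedge\tau_2)\leq \min_i f(\tau_i)=\min_i f(p_i)-\veps$, so every path joining $p_1$ to $p_2$ dips to at most $\min_i f(p_i)-\veps$, which is what your Step 3 actually needs (and with margin $\veps$ rather than $\veps/2$). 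With that fix the argument is complete.
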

The proof of this lemma relies itself on two lemmata, which are interesting in and of themselves.
\begin{lemma}
\label{lemma:dimYleqdimX}
Let $X$ and $Y$ be two metric spaces such that there is a surjective map $\pi: X \to Y$ such that $\pi \in C^\al(X,Y)$, then
\be
\updim Y  \leq \frac{1}{\al}\, \updim X  \quad \text{and} \quad \downdim Y \leq \frac{1}{\al} \downdim X\,.
\ee 
\end{lemma}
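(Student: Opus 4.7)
The plan is to compare the covering numbers of $Y$ at scale $\delta$ with those of $X$ at scale $\varepsilon = (\delta/C)^{1/\alpha}$, where $C$ is the $\alpha$-Hölder constant of $\pi$. Denote by $\mathcal{N}_X(\varepsilon)$ and $\mathcal{N}_Y(\delta)$ the minimal number of balls needed to cover $X$ and $Y$ respectively at the indicated scales.

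The first step is the geometric observation that, because $\pi \in C^\alpha(X,Y)$, the image of any ball $B_X(x,\varepsilon)$ under $\pi$ has diameter at most $C \varepsilon^\alpha$, hence is contained in some ball $B_Y(\pi(x), C\varepsilon^\alpha)$. Since $\pi$ is assumed surjective, any covering of $X$ by $\mathcal{N}_X(\varepsilon)$ balls of radius $\varepsilon$ pushes forward via $\pi$ to a covering of $Y$ by $\mathcal{N}_X(\varepsilon)$ sets contained in balls of radius $C\varepsilon^\alpha$, yielding the fundamental estimate
\be
\mathcal{N}_Y(C \varepsilon^\alpha) \leq \mathcal{N}_X(\varepsilon) \,.
\ee

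The second step is to pass to dimensions. Setting $\delta = C\varepsilon^\alpha$, so that $\log(1/\delta) = \alpha \log(1/\varepsilon) - \log C$, we obtain
\be
\frac{\log \mathcal{N}_Y(\delta)}{\log(1/\delta)} \leq \frac{1}{\alpha} \cdot \frac{\log \mathcal{N}_X(\varepsilon)}{\log(1/\varepsilon) - \tfrac{1}{\alpha}\log C} \,.
\ee
The additive constant $\tfrac{1}{\alpha}\log C$ in the denominator becomes negligible as $\varepsilon \to 0$, so taking $\limsup$ on both sides gives $\updim Y \leq \tfrac{1}{\alpha} \updim X$, and the same monotonicity of $\liminf$ under pointwise inequality yields $\downdim Y \leq \tfrac{1}{\alpha} \downdim X$.

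I do not expect a serious obstacle here: the only subtle point is to make sure that the change of variables $\varepsilon \leftrightarrow \delta$ is a bijection of $(0,\infty)$ (which it is, since $\alpha > 0$) and that the additive $\log C$ can be discarded in the limit, which is a routine estimate. The surjectivity of $\pi$ is used exactly once, to transfer a cover of $X$ to a cover of $Y$.
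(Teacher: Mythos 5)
Your proposal is correct and follows essentially the same route as the paper: use the $\al$-H\"older bound to show that the image of an $\veps$-ball of $X$ is contained in a ball of radius comparable to $\veps^\al$ in $Y$, deduce $\mathcal{N}_Y(C\veps^\al)\leq \mathcal{N}_X(\veps)$ from surjectivity, and then absorb the constant $\log C$ when passing to the $\limsup$ and $\liminf$. (Your parenthetical diameter bound $C\veps^\al$ should strictly be $C(2\veps)^\al$, but the containment $\pi(B_X(x,\veps))\subset B_Y(\pi(x),C\veps^\al)$ that you actually use follows directly from the H\"older estimate at the center, so nothing is affected.)
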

\begin{lemma}
\label{lemma:fCalpiCal}
Let $X$ be a compact locally linearly connected (LLC) metric space (\cf definition \ref{def:LLCspace}) and let $f: X \to \R$ be a continuous function, then
\be
f \in C^\al(X,\R) \Longrightarrow  \pi_f \in C^\al(X,T_f) \,.
\ee
\end{lemma}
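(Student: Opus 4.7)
The plan is to directly estimate $d_f(\pi_f(x), \pi_f(y)) = d_f(x,y)$ in terms of $d(x,y)^\alpha$ by exploiting the explicit formula from Definition \ref{def:explicitpseudodistance} together with the LLC hypothesis.

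First I would rewrite $d_f$ in a form that isolates the two contributions one expects to be small. Writing
\be
m(x,y) := \sup_{\gamma : x \mapsto y} \inf_{t \in [0,1]} f(\gamma(t)),
\ee
and assuming without loss of generality that $f(x) \leq f(y)$, one has the trivial bound $m(x,y) \leq f(x)$, so
\be
d_f(x,y) \;=\; \bigl[f(y)-f(x)\bigr] \;+\; 2\bigl[f(x) - m(x,y)\bigr].
\ee
The first bracket is directly controlled by the H\"older hypothesis: $f(y)-f(x) \leq \lVert f\rVert_{C^\al}\, d(x,y)^\al$. So the whole problem reduces to bounding $f(x) - m(x,y)$ from above by $O(d(x,y)^\al)$, i.e.\ to producing a path from $x$ to $y$ along which $f$ does not descend much below $f(x)$.

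Here is where LLC enters. By Definition \ref{def:LLCspace}, there is a constant $D>0$ (depending only on $X$) and an arc $\gamma:[0,1]\to X$ from $x$ to $y$ whose diameter satisfies $\operatorname{diam}(\gamma) \leq D\, d(x,y)$. In particular, for every $z \in \Image(\gamma)$, $d(x,z) \leq D\, d(x,y)$, so by H\"older continuity
\be
f(z) \;\geq\; f(x) - \lVert f\rVert_{C^\al}\,d(x,z)^\al \;\geq\; f(x) - \lVert f\rVert_{C^\al}\, D^\al\, d(x,y)^\al.
\ee
Taking the infimum over $t$ along this specific $\gamma$ and then the supremum over all paths yields $m(x,y) \geq f(x) - \lVert f\rVert_{C^\al}\, D^\al\, d(x,y)^\al$, so $f(x) - m(x,y) \leq \lVert f\rVert_{C^\al}\, D^\al\, d(x,y)^\al$.

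Putting the two estimates back into the expression for $d_f$ gives
\be
d_f(x,y) \;\leq\; \lVert f\rVert_{C^\al}\bigl(1 + 2D^\al\bigr)\, d(x,y)^\al,
\ee
which is exactly the H\"older estimate for $\pi_f : (X,d) \to (T_f, d_f)$, with constant $\lVert f\rVert_{C^\al}(1+2D^\al)$. The only genuinely non-trivial ingredient is the LLC bound on $\operatorname{diam}(\gamma)$: without it one could only bound the length of a connecting path, which would not control $f$ pointwise along it via the H\"older estimate. The argument is otherwise a direct unpacking of the definition of $d_f$.
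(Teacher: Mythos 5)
Your proof is correct and follows essentially the same route as the paper: the key step in both is to use the LLC condition to produce a connecting arc of diameter $O(d(x,y))$ and then the H\"older bound to show $f$ cannot drop much below $f(x)$ along it, which controls the sup-inf term in the definition of $d_f$. The only cosmetic difference is that you work directly with the defining formula of the pseudo-distance on $X$, while the paper phrases the same quantity via the minimum of $f$ along the geodesic in $T_f$; the estimates and constants are equivalent.
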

Let us show that lemmata \ref{lemma:dimYleqdimX} and \ref{lemma:fCalpiCal} imply lemma \ref{lemma:RegularityDimension}.
\begin{proof}[Proof of lemma \ref{lemma:RegularityDimension}]
If, up to precomposition, $f \notin C^\al(X,\R)$ for any $\al$, there is nothing to show, since the statement is vacuous. Otherwise, since $T_f$ is preserved by precomposition by a homeomorphism, we may suppose without loss of generality that $f \in C^\al(X,\R)$. The projection onto the tree of $f$, $\pi_f : X \to T_f$ is in $C^\al(X,T_f)$ according to lemma \ref{lemma:fCalpiCal}. It follows from lemma \ref{lemma:dimYleqdimX} that
\be
\updim T_f \leq \frac{1}{\al} \, \updim X \,.
\ee
The statement of the theorem follows by taking the infimum over $\frac{1}{\al}$.
\end{proof}
All that remains to show is the two remaining lemmata.
\begin{proof}[Proof of lemma \ref{lemma:dimYleqdimX}]
Since $\pi : X \to Y$ is surjective and $C^\al(X,Y)$, for any $x \in X$
\be
\pi\!\left(B_X\!\left(x, \left(\frac{\veps}{K}\right)^{1/\al}\right)\right) \subset B_Y(\pi(x),\veps) 
\ee
for some constant $K$. It follows that the minimal number of balls needed to cover $X$, $\NN_X$ dominates the minimal number of balls needed to cover $Y$, $\NN_Y$. More precisely
\be
\NN_Y(\veps) \leq \NN_X\!\left(\left(\frac{\veps}{K}\right)^{1/\al}\right) \iff \al \;\frac{\NN_Y(\veps)}{\log(1/\veps) + \log(K)} \leq \frac{\NN_X\!\left(\left(\frac{\veps}{K}\right)^{1/\al}\right)}{\log\!\left(\left(\frac{K}{\veps}\right)^{1/\al}\right)} \,.\nonum
\ee
The statement of the lemma follows.
\end{proof}
\begin{proof}[Proof of lemma \ref{lemma:fCalpiCal}]
Suppose that $f: X \to \R$ is in $C^\al(X, \R)$ with H\"older constant $\Lambda$ and let $x,y \in X$. Without loss of generality, suppose that $f(x) < f(y)$. Since $T_f$ is a geodesic space, the distance $d_f(\pi_f(x), \pi_f(y))$ is the length of the geodesic arc in $T_f$ linking $\pi_f(x)$ and $\pi_f(y)$. By compactness of this geodesic path, there is a point $\tau \in T_f$ where $f$ achieves its minimum, thus
\be
d_f(\pi_f(x),\pi_f(y)) = f(x)- f(\tau)  + f(y) - f(\tau) \;.
\ee
This minimum $f(\tau)$ has the particularity that
\be
f(\tau) = \sup_{\gamma: x \mapsto y} \inf_{t \in [0,1]} f \circ \gamma \;,
\ee
where the supremum is taken over all paths on $X$ linking $x$ and $y$. From the LLC condition, we know that there is a path $\eta: x \mapsto y$ whose diameter is controlled by $d_X(x,y)$ and $z \in X$ achieving the minimum of $f$ over $\eta$. In particular,
\be
f(\tau) \geq \inf_{t \in [0,1]} f \circ \eta =: f(z) \,.
\ee 
Since $f$ is $\al$-H\"older on $X$,
\be
f(x) - f(\tau) \leq  f(x)-f(z) \leq \Lambda \; d(x,z)^\al \leq \Lambda \;\text{diam}(\eta)^\al \leq  C\Lambda\; d(x,y)^\al
\ee
for some constant $C$ determined by the LLC condition and we have an analogous inequality for $f(y)-f(\tau)$. Putting everything together we have that:
\be
d_f(\pi_f(x),\pi_f(y)) \leq 2C \Lambda\; d_X(x,y)^\al\,,
\ee
which finishes the proof.
\end{proof}
Lemma \ref{lemma:RegularityDimension} is sharp, since Brownian sample paths almost surely saturate this inequality. However, there is no hope to prove equality for every $f$. Indeed, for any $f \in C^1(\mathbb{T}^2,\R)$ having a finite amount of bars, $T_f$ is a finite tree and has upper-box dimension $1$, but
\be
\updim T_f = 1 < 2 = \Ham(f) \updim \mathbb{T}^2 \;.
\ee
Nonetheless, it is possible to show that lemma \ref{lemma:RegularityDimension} holds generically. This is a consequence of a generalization of work never published by Weinberger and Baryshnikov. We extend their result to homogenous enough spaces in the following sense.
\begin{definition}
A metric space $(X,d)$ is said to \textbf{admit a homogeneous set} (for a certain property) if there exists an open set $U \subset X$ where for every ball $B(x,r) \subset U$, the property of the ball is the same as the property of the space $X$. 
\end{definition}
\begin{remark}
In the previous definition, one can for instance take any notion of dimension, entropy, \textit{etc}.
\end{remark}
The following proposition will be useful in simplifying the assumptions of the theorem.
\begin{proposition}
\label{prop:PackingVsCover}
Let $(X,d)$ be a compact metric space and $N_P(\veps)$ denote the cardinality of the maximal packing of $X$ by balls of radius $\veps$. Then,
\be
\mathcal{N}_X(2\veps) \leq N_P(\veps) \leq \mathcal{N}_X(\veps) \,
\ee
and in particular, 
\be
\downdim(X) = \liminf_{\veps \to 0} \frac{\log(N_P(\veps))}{\log(1/\veps)} \quad \text{and} \quad \updim(X) = \limsup_{\veps \to 0} \frac{\log(N_P(\veps))}{\log(1/\veps)}\,.
\ee
\end{proposition}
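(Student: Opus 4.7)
The plan is to establish the two-sided inequality between packing and covering numbers by standard maximality arguments, then deduce the dimensional statement by a logarithmic rescaling that does not affect the limit.

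For the first inequality $\mathcal{N}_X(2\veps) \leq N_P(\veps)$, I would fix a maximal packing $\{B(x_i,\veps)\}_{i=1}^{N_P(\veps)}$ of $X$ by pairwise disjoint balls of radius $\veps$. By maximality of the packing, no ball $B(x,\veps)$ for $x \in X$ can be added while preserving disjointness, so for every $x \in X$ there exists an index $i$ with $B(x,\veps) \cap B(x_i,\veps) \neq \emptyset$; hence $d(x,x_i)<2\veps$, which means $x \in B(x_i, 2\veps)$. Therefore the collection $\{B(x_i,2\veps)\}_{i=1}^{N_P(\veps)}$ is a cover of $X$, yielding $\mathcal{N}_X(2\veps) \leq N_P(\veps)$.

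For the second inequality $N_P(\veps) \leq \mathcal{N}_X(\veps)$, I would take a minimal cover $\{B(y_j,\veps)\}_{j=1}^{\mathcal{N}_X(\veps)}$ of $X$ together with a maximal packing $\{x_i\}$. Each $x_i$ lies in at least one $B(y_j,\veps)$. If two packing centers $x_i$ and $x_{i'}$ ($i \neq i'$) fell into the same ball $B(y_j,\veps)$, then the triangle inequality would give $d(x_i,x_{i'}) < 2\veps$, contradicting the disjointness of $B(x_i,\veps)$ and $B(x_{i'},\veps)$. Hence assigning each $x_i$ to some $y_j$ containing it defines an injection, so $N_P(\veps) \leq \mathcal{N}_X(\veps)$.

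For the dimensional statement, taking logarithms in the chain $\mathcal{N}_X(2\veps)\le N_P(\veps)\le \mathcal{N}_X(\veps)$ and dividing by $\log(1/\veps)$ gives
\be
\frac{\log \mathcal{N}_X(2\veps)}{\log(1/(2\veps))}\cdot\frac{\log(1/(2\veps))}{\log(1/\veps)}\;\leq\;\frac{\log N_P(\veps)}{\log(1/\veps)}\;\leq\;\frac{\log \mathcal{N}_X(\veps)}{\log(1/\veps)}.
\ee
Since $\log(1/(2\veps))/\log(1/\veps)\to 1$ as $\veps \to 0$, the $\liminf$ and $\limsup$ of the outer two expressions coincide with the corresponding limits of $\log \mathcal{N}_X(\veps)/\log(1/\veps)$, which by definition are $\downdim(X)$ and $\updim(X)$ respectively. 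A squeeze then yields the claimed identities. There is no serious obstacle here; the only minor subtlety is being consistent about open versus closed balls in the definition of packing (so that disjointness of $B(x_i,\veps)$ and $B(x_{i'},\veps)$ translates into $d(x_i,x_{i'})\geq 2\veps$ or $>2\veps$), but either convention makes the argument go through after a trivial adjustment.
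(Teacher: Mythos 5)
Your proposal is correct and follows essentially the same route as the paper: maximality of the packing shows the doubled balls cover $X$ (giving $\mathcal{N}_X(2\veps)\leq N_P(\veps)$), and a pigeonhole/triangle-inequality argument comparing packing centers with a minimal cover gives $N_P(\veps)\leq \mathcal{N}_X(\veps)$. Your explicit logarithmic rescaling for the dimension identities is a detail the paper leaves implicit, but it is the standard argument and adds nothing structurally different.
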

\begin{proof}
Let $M_\veps$ be a maximal packing of $X$ by balls of radius $\veps$. For every $x \in X \setminus (\cup_{V \in M_\veps} V)$ there exists $U \in M_\veps$ such that $d(x,U) \leq \veps$, otherwise, $B(x,\veps) \cup M_\veps$ would also be a packing of $X$ with cardinality strictly greater than $\abs{M_\veps}$. It follows that the balls of radius $2\veps$ of centers that of the maximal packing of radius $\veps$ is a covering of $X$, proving the first inequality. 

For the second inequality, we reason by contradiction. Suppose there is a maximal packing $P_\veps$ and a minimal covering $C_\veps$ such that $\abs{P_\veps} \geq \abs{C_\veps}+1$. Then, since $C_\veps$ covers $X$, by the pigeonhole principle there are at least two centers of balls of $P_\veps$ inside a ball of $C_\veps$. But the triangle inequality implies that the balls around these two centers of radius $\veps$ have non-empty intersection (as the center of the ball of $C_\veps$ in which they are contained is in the intersection), thereby contradicting that $P_\veps$ is a packing, showing the result.
\end{proof}
\begin{theorem}
\label{thm:genericityofHomDim}
Let $X$ be a compact LLC space admitting a set of homogeneous lower-box dimension, then for any $0<\al\leq 1$
\be
\sup_{f \in C^\al(X,\R)} \al \Lag(f) = \updim(X)\,.
\ee 
Moreover, the supremum is attained generically in the sense of Baire, \textit{i.e.} the set over which $\al \Lag(f) < \updim(X)$  is meagre in $C^\al(X,\R)$.
\end{theorem}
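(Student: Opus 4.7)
The upper bound $\al \Lag(f) \leq \updim(X)$ is immediate from Lemma \ref{lemma:RegularityDimension}, since any $f \in C^\al(X,\R)$ satisfies $\Ham(f) \leq 1/\al$. The thrust of the proof is therefore the genericity statement, from which the identity $\sup_{f} \al\Lag(f) = \updim(X)$ follows at once. The plan is to show that for every rational $q \in [1, \updim(X)/\al)$ and every $M>0$, the sublevel set
\be
A_{q,M} := \{ f \in C^\al(X,\R) \, \vert \, \Pers_q^q(H_0(X,f)) \leq M \}
\ee
is closed with empty interior. Since $\{f : \al\Lag(f) < \updim(X)\}$ is a countable union of such $A_{q,M}$ (over rational $q \nearrow \updim(X)/\al$ and integer $M$), this exhibits the ``bad'' set as meagre in $C^\al(X,\R)$.

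For the closedness, $C^\al$-convergence entails uniform convergence, and the $L^\infty$-stability of persistence diagrams guarantees that bars of the limit are limits of bars of the sequence. A Fatou argument applied to the sum defining $\Pers_q^q$ then gives $\Pers_q^q(f) \leq \liminf_n \Pers_q^q(f_n)$, closing $A_{q,M}$.

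The heart of the proof is the empty-interior claim. Fix $f \in A_{q,M}$ and $\eta>0$; I aim to build $g$ with $\norm{f-g}_{C^\al} < \eta$ and $\Pers_q^q(g) > M$. Pick $\gamma \in (\al q, \updim(X))$, let $U$ be the set of homogeneous lower-box dimension and fix a ball $B(x_0,r_0) \subset U$. Using Proposition \ref{prop:PackingVsCover} together with homogeneity---which propagates the dimension estimate from $X$ down to any ball inside $U$---one extracts a subsequence $\veps_n \to 0$ along which $B(x_0,r_0)$ admits a packing by disjoint balls $B(x_i^n,\veps_n)$ of cardinality $N_n \geq \veps_n^{-\gamma}$. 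On each such ball plant a Hölder bump $\varphi_{n,i}$ supported in $B(x_i^n,\veps_n)$, of peak value $c\veps_n^\al$, chosen (via an explicit $\al$-Hölder radial profile) so that $\norm{\varphi_{n,i}}_{C^\al} \leq Kc$ with $K$ independent of $n,i$. Since the supports are disjoint, the total perturbation $h_n := \sum_i \varphi_{n,i}$ satisfies $\norm{h_n}_{C^\al} \leq K'c$, and one fixes $c < \eta/K'$ small enough to stay within the $\eta$-ball but also large enough so that $c > 2\norm{f}_{C^\al}$ (which is the only competing scale at the local level). Setting $g := f + h_n$, each bump produces at least one new connected component of the superlevel set at heights just below its peak, a component that persists until it merges with the ambient component of $f$, giving a genuine new $H_0$-bar of length at least $c\veps_n^\al/2$. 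Hence
\be
\Pers_q^q(g) \geq N_n \left(\tfrac{c\veps_n^\al}{2}\right)^q \geq 2^{-q} c^q \, \veps_n^{\al q - \gamma} \xrightarrow[n\to\infty]{} +\infty,
\ee
so $g \notin A_{q,M}$ for $n$ large enough.

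The main obstacle will be the final homological claim---that each planted bump indeed creates a bar of length comparable to $c\veps_n^\al$. This requires controlling the local oscillation of $f$ on scale $\veps_n$: by $\al$-Hölder continuity this oscillation is $\leq \norm{f}_{C^\al}\veps_n^\al$, so the choice $c > 2\norm{f}_{C^\al}$ ensures each bump produces a strict local maximum in $B(x_i^n,\veps_n)$ whose superlevel component remains isolated from the neighbouring bumps (the supports being disjoint) and from the bulk level set of $f$ until height $f(x_i^n) + c\veps_n^\al/2$. Provided this separation is justified by a careful examination of $d_{f+h_n}$ on $B(x_i^n,\veps_n)$, theorem \ref{thm:TfcalulatesH0} translates this directly into a bar of the required length; the existence of the homogeneous packing and the Fatou-type semicontinuity are routine.
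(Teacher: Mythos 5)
Your upper bound ($\al\Lag(f)\leq\updim(X)$ via lemma \ref{lemma:RegularityDimension}) and your closedness step are fine; in fact the lower-semicontinuity of $\Pers_q^q$ under uniform convergence, obtained from bottleneck stability, is a legitimate alternative to the paper's route, which instead restricts to $C^\al_\Lambda(X,\R)$ and uses continuity of the truncated functional $\Pers^p_{p,\veps}$ (lemma \ref{lemma:PerspvepsContinuity}). The genuine gap is in the empty-interior argument: you require simultaneously $c<\eta/K'$ (so that $\norm{h_n}_{C^\al}<\eta$) and $c>2\norm{f}_{C^\al}$ (so that each bump dominates the oscillation of $f$ on its support, which is of the same order $\norm{f}_{C^\al}\veps_n^\al$ as the bump height $c\veps_n^\al$). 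These two requirements are incompatible as soon as $\norm{f}_{C^\al}\geq \eta/(2K')$, i.e. for every $f$ outside a small ball around $0$. Nor can the second requirement be dropped: if $c\ll\norm{f}_{C^\al}$ the bumps may simply ride on the local variation of $f$ without creating any local maximum of the superlevel filtration, hence no new bars; and no rescaling of the bump profile escapes the tension, because any perturbation whose height exceeds $\norm{f}_{C^\al}\veps^\al$ on a ball of radius $\veps$ has $C^\al$-seminorm at least of order $\norm{f}_{C^\al}$. You also cannot first replace $f$ by a Lipschitz or smooth function (whose scale-$\veps_n$ oscillation would be $o(\veps_n^\al)$ and thus beaten by the bumps for large $n$), since such functions are not dense in the $C^\al$-norm topology (their closure is the little H\"older space). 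So as written your construction establishes density of $\{\Pers_q^q>M\}$ only near functions of very small H\"older norm, and the meagreness of $A_{q,M}$ — hence the theorem — does not follow.

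This interaction between the planted oscillation and the ambient $f$ is exactly what the paper's density lemma (lemma \ref{lemma:DenseSetOfInfinitePersp}) is built to neutralize: there one first passes to a function close to $f$ which is \emph{locally constant} on a small ball, and only then adds the oscillating term ($d^\al(\cdot,E_{\veps_n})$, suitably scaled) on that ball, so the new bars have nothing to compete with; the Baire argument is then run through the decomposition $\mathcal{S}(p)=\bigcup_{\Lambda,M}\mathcal{S}(p,\Lambda,M)$. To repair your proof you must either incorporate such a flattening step (and justify its size in the topology you work in), or argue a dichotomy: on the packing balls where the oscillation of $f$ at scale $\veps_n$ is much smaller than $\norm{f}_{C^\al}\veps_n^\al$ your bumps do create bars, and on the remaining balls $f$ itself already produces bars of length comparable to $\veps_n^\al$. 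Neither ingredient is present in your write-up; by comparison, the constant-chasing issue you flag at the end (bar length $c\veps_n^\al/2$ versus oscillation $2^\al\norm{f}_{C^\al}\veps_n^\al$) is minor.
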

% \begin{remark}
% If the space $X$ is triangulable, the same statement holds for the \textit{total $p$-persistence} of the function $f$, \textit{i.e.} the sum over all the $\Pers_p$-functionals of the $k$th degree persistent homology diagrams of $f$. 
% \end{remark}
Once again, we split the proof along key lemmata.
\begin{lemma}
\label{lemma:PerspvepsContinuity}
Let $X$ be a compact LLC space, then the functional $\Pers_{p,\veps}^p: C^\al_\Lambda(X,\R) \to \R_+$ defined by 
\be
f \mapsto \sum_{\substack{b \in \bcode(f) \\ \ell(b) \geq \veps}} \ell(b)^p
\ee 
is continuous.
\end{lemma}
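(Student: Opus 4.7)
The plan is to deduce continuity from the classical bottleneck stability theorem of Cohen-Steiner, Edelsbrunner and Harer, combined with the finiteness of the number of bars above any fixed length. I fix $f \in C^\al_\Lambda(X,\R)$, take a perturbation $g \in C^\al_\Lambda(X,\R)$ with $\delta := \norm{f-g}_{C^\al}$ small, and use $\norm{f-g}_\infty \leq \delta$ together with stability to produce
\be
d_\infty(\Dgm(f),\Dgm(g)) \leq \delta \,,
\ee
and hence a partial matching $\pi$ between $\Dgm(f)$ and $\Dgm(g)$ whose matched pairs $(b,\pi(b))$ lie at $\ell^\infty$-distance at most $\delta$ in the plane, with unmatched points at distance at most $\delta$ from the diagonal.

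Next, I would reduce to a finite combinatorial problem. Under the standing hypotheses on $X$ (compact, connected, locally path-connected), the persistence module $H_k(X,f)$ is q-tame, so for every $\eta > 0$ only finitely many bars of $\Dgm(f)$ have length $\geq \eta$. Choosing $\delta < \veps/4$ and applying this with $\eta := \veps - 2\delta > \veps/2$ restricts attention to the finite set $A := \{b \in \Dgm(f) : \ell(b) \geq \veps - 2\delta\}$; every bar contributing to $\Pers_{p,\veps}^p(f)$ or $\Pers_{p,\veps}^p(g)$ corresponds through $\pi$ to an element of $A$, and none is matched to the diagonal since the relevant lengths are bounded below by $\veps - 2\delta > 0$.

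For each matched pair the length difference is bounded by $|\ell(b)-\ell(\pi(b))| \leq 2\delta$, so the mean value theorem yields
\be
\abs{\ell(b)^p - \ell(\pi(b))^p} \leq 2 p L^{p-1} \delta \,,
\ee
where $L := \sup_X f - \inf_X f$ is uniformly bounded on $C^\al_\Lambda(X,\R)$ by compactness of $X$. Summing over matched pairs in which both endpoints lie above the threshold $\veps$ produces an $O(|A|\,\delta)$ bound on the corresponding contribution to $\abs{\Pers_{p,\veps}^p(f) - \Pers_{p,\veps}^p(g)}$.

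The hard part is controlling the contribution of bars whose length lies in the critical window $[\veps - 2\delta, \veps + 2\delta]$, since such a bar may be counted in one diagram and not in the other and contribute up to $\veps^p$ to the difference. Finiteness of $A$ ensures that these "marginal" bars are in bounded number, and provided $f$ has no bar of length exactly $\veps$, the window is empty for all $\delta$ small enough so that the previous $O(\delta)$ bound already gives continuity of $\Pers_{p,\veps}^p$ at $f$. The genuine obstacle is therefore to rule out or quantify the discrepancy coming from bars sitting exactly at the threshold: a natural workaround is to replace the sharp indicator $\mathbf{1}_{\ell \geq \veps}$ by a smoothed cutoff and invoke a dominated convergence argument, or to exploit that $f$'s possessing a bar of exact length $\veps$ form a meagre subset of $C^\al_\Lambda(X,\R)$, yielding the generic continuity actually needed in the subsequent application to Theorem \ref{thm:genericityofHomDim}.
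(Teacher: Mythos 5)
Your matched-pair estimate is the same argument the paper runs: pick the bottleneck matching, bound $\abs{\ell(b_f)-\ell(b_g)}\leq\norm{f-g}_\infty$ by stability, and control $\abs{\ell(b_f)^p-\ell(b_g)^p}$ by a mean-value factor times a uniform bound on bar lengths. The one real difference on this part is how the bar count above the threshold is obtained: you invoke q-tameness of $H_0(X,f)$, which gives finiteness at the fixed $f$ only, whereas the paper gets a bound $N^\veps_f\leq\mathcal{N}_X\big((\veps/2K)^{1/\al}\big)$ that is \emph{uniform} over $C^\al_\Lambda(X,\R)$, via lemma \ref{lemma:fCalpiCal} ($\pi_f$ is $\al$-H\"older with constant depending only on $\Lambda$ and the LLC constant of $X$) combined with the covering argument of lemma \ref{lemma:dimYleqdimX}. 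That uniformity is what turns the estimate into an explicit Lipschitz-type modulus $C_{X,\al,\veps}\,\Lambda^{p-1}\mathrm{diam}(X)^{\al(p-1)}\,p\,\norm{f-g}_\infty$, and the same count is reused later (e.g.\ in theorem \ref{thm:WassersteinpStabilityLLC}), so it is worth using it rather than pointwise q-tameness.

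Where you stop, however, is exactly where the proof of the stated lemma would have to be completed, and you leave it unresolved: the bars whose lengths fall in the window $[\veps-2\delta,\veps+2\delta]$ and get counted on one side of the threshold but not the other are not handled, and your two suggested fixes (smoothed cutoff, genericity of the exceptional set) are only sketched. So as a proof of the lemma as written, the proposal is incomplete. That said, your diagnosis is correct, and it is sharper than the paper's own treatment: the sharp-cutoff functional really does jump by roughly $\veps^p$ at any $f$ having a bar of length exactly $\veps$, since an arbitrarily small perturbation inside $C^\al_\Lambda(X,\R)$ pushes that bar just below the threshold; the paper's first displayed inequality, which restricts the sum to matched pairs with \emph{both} lengths $\geq\veps$, silently discards precisely these cross-threshold terms. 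The efficient repair is not genericity or smoothing but to notice that the downstream application (closedness of the sets $\{f\in C^\al_\Lambda(X,\R)\,\vert\,\Pers^p_{p,1/k}(f)\leq M\}$ in theorem \ref{thm:genericityofHomDim}) only needs a one-sided statement that your matching argument already yields: if $f_n\to f$ uniformly with $\Pers_p^p(f_n)\leq M$, matching the finitely many bars of $f$ of length $\geq\veps$ to bars of $f_n$ of length $\geq\veps-2\norm{f-f_n}_\infty$ gives $\Pers^p_{p,\veps}(f)\leq\liminf_n\Pers_p^p(f_n)\leq M$, and letting $\veps\to0$ gives $\Pers_p^p(f)\leq M$, i.e.\ lower semicontinuity of $\Pers_p^p$, which is all that is needed. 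Writing that out would close your gap (and, incidentally, the paper's).
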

\begin{proof}
We start by noting that the total number of bars of length $\geq \veps$ that a function $f \in C^\al_\Lambda(X,\R)$ can have is uniformly bounded above by virtue of the proof of lemma \ref{lemma:dimYleqdimX} by a constant $C_{X,\al,\veps}$. By lemma \ref{lemma:fCalpiCal}, we know $\pi_f : X \to T_f$ is $\al$-H\"older, with H\"older constant $K$ depending only on $\Lambda$ and $X$. This fact, combined with the inequality $N^\veps_f \leq \mathcal{N}_{T_f}(\veps/2)$ entails that for any $f$,
\be
N^\veps_f \leq \mathcal{N}_{T_f}(\veps/2) \leq \mathcal{N}_X\left(\left(\frac{\veps}{2K}\right)^{1/\al}\right)=: C_{X,\al,\veps} \,.
\ee
It follows that for any $f, g \in C^\al_\Lambda(X,\R)$, by choosing to sum along the $d_\infty$-matching, we have 
\begin{align*}
\abs{\Pers_{p,\veps}^p(f)-\Pers_{p,\veps}^p(g)} &\leq \sum_{\substack{b_f \in \bcode(f) \,,\; b_g \in \bcode(g)  \\ \ell(b_f), \ell(b_g) \geq \veps}}\abs{\ell(b_f)^p - \ell(b_g)^p} \\
&\leq \sum p \underbrace{\abs{\ell(b_f)- \ell(b_g)}}_{\leq \norm{f-g}_\infty \text{ by stability}}\,\max\{\ell(b_f)^{p-1},\ell(b_g)^{p-1}\}  \\
&\leq p  \norm{f-g}_\infty \underbrace{\sum \max\{\ell(b_f)^{p-1},\ell(b_g)^{p-1}\}}_{\leq C_{X,\al,\veps} \Lambda^{p-1} \,\text{diam}(X)^{\al(p-1)} \text{ by global $\al$-H\"olderness}} \\
&\leq C_{X,\al,\veps}\, \Lambda^{p-1} \text{diam}(X)^{\al(p-1)} \,  p \, \norm{f-g}_\infty \,. 
\end{align*}
\end{proof}
\begin{lemma}
\label{lemma:DenseSetOfInfinitePersp}
Let $X$ be a compact, LLC, admitting a set of homogeneous lower-box dimension. Then, for all $p <\updim(X)$ and $M\geq0$, the set of functions 
\be
\{f \in C^\al(X,\R) \, \vert \, \Pers_p^p(f) > M\}
\ee
is dense in $C^\al(X,\R)$. 
\end{lemma}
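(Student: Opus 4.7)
The plan is to build, for any $g \in C^\al(X,\R)$ and any $M \geq 0$, an explicit bump perturbation $h$ such that $\Pers_p^p(g + h) > M$; density then follows by showing that $h$ can be taken small in the relevant sense. I would fix a ball $B(x_0, r_0)$ inside the homogeneous open set $U$ granted by the hypothesis on $X$. By proposition \ref{prop:PackingVsCover} and the homogeneity of the box dimension on $B$, for any $q$ with $p < q < \updim(X)$ there is a sequence $\delta_n \to 0$ along which the maximal $\delta_n$-packing of $B(x_0, r_0)$ has cardinality $N_P(\delta_n) \geq \delta_n^{-q}$; denote the packing centers by $\{x_i\}_{i=1}^{N_P(\delta_n)}$.

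Around each $x_i$ I would place an $\al$-H\"older bump $\psi_i$ supported in $B(x_i, \delta_n/2)$ with peak height $h_0 = K \delta_n^\al$ for a constant $K$ to be determined, for instance
$$\psi_i(x) := h_0 \,\max\!\left\{0,\; 1 - \tfrac{2\, d(x, x_i)}{\delta_n}\right\}^{\al}.$$
Each $\psi_i$ has $\|\psi_i\|_{C^\al} \sim K$, and by disjointness of the supports the same holds for $h := \sum_i \psi_i$. The LLC condition bounds the oscillation of $g$ on each $B(x_i, \delta_n)$ by $C_{\text{LLC}}\,\|g\|_{C^\al}\, \delta_n^\al$, so for $K$ taken large enough relative to $\|g\|_{C^\al}$, each bump creates a local maximum of $g+h$ at $x_i$ which, by proposition \ref{prop:handpathexistence} and theorem \ref{thm:TfcalulatesH0}, induces an $H_0$-bar of length at least $h_0/2$. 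Summing,
$$\Pers_p^p(g + h) \;\geq\; N_P(\delta_n)\,(h_0/2)^p \;\gtrsim\; \delta_n^{-q}\,(K\, \delta_n^\al)^p \;=\; K^p\,\delta_n^{\al p - q},$$
which diverges as $\delta_n \to 0$ since $\al \leq 1$ and $p < q$ together force $\al p - q < 0$; for $\delta_n$ small enough we obtain $\Pers_p^p(g + h) > M$.

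The main obstacle I anticipate is that the bumps must dominate $g$'s oscillation at scale $\delta_n$ in order to create bars, which forces $K \gtrsim \|g\|_{C^\al}$ and hence produces $h$ with $C^\al$-norm of the order of $\|g\|_{C^\al}$ rather than arbitrarily small. Consequently this naive construction yields density only in a coarser topology (essentially $C^0$ on $C^\al$-bounded subsets). To upgrade to genuine density in $C^\al(X, \R)$, one would need either a multi-scale superposition of bumps whose individual $C^\al$-norms are summed in a controlled way, or a localization of the construction at points where $g$ is locally nearly constant so that much smaller bumps already suffice to create bars, or finally to exploit the lower semi-continuity of $\Pers_p^p$ (together with lemma \ref{lemma:PerspvepsContinuity}) by producing at least one function with $\Pers_p^p = \infty$ in every $C^\al$-neighborhood of $g$, e.g.\ via a rescaled rough perturbation of Brownian type. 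The careful joint calibration of bump height $h_0$, spatial scale $\delta_n$, and number of packing centers $N_P(\delta_n)$, compatibly with smallness in $C^\al$, is where the bulk of the technical work will concentrate.
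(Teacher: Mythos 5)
Your core construction is the paper's: the proof there takes a small ball inside the homogeneous set, lets $E_{\veps_n}$ be the centers of a maximal $\veps_n$-packing, and adds $h_n(x)=d^\al(x,E_{\veps_n})$, which is exactly your family of disjoint $\al$-H\"older bumps of height $\sim\veps_n^\al$; the divergence estimate $\Pers_p^p \gtrsim N_P(\veps_n)\,\veps_n^{\al p}\to\infty$ (using proposition \ref{prop:PackingVsCover} and $\al p<\updim(X)$) is the same computation you carry out. Where you stop --- the calibration of the bump amplitude against the oscillation of the ambient function $g$ --- is handled in the paper by a device that is essentially your second fallback made constructive: instead of taking $K\gtrsim\norm{g}_{C^\al}$, one first replaces $g$ by a nearby function that is \emph{locally constant} on a small ball contained in the homogeneous set (a perturbation whose size is the oscillation of $g$ on that ball, hence arbitrarily small because the homogeneity hypothesis lets the ball be taken as small as one wishes while retaining the full box dimension), and only then adds $h_n$ supported in that ball. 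Since the flattened function has no oscillation there, the bumps create bars of length comparable to their own height however small their amplitude, and the count $N_P(\veps_n)$ still forces $\Pers_p^p>M$ for $n$ large. So the one idea separating your writeup from the paper's proof is this ``flatten first, then bump'' step.

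That said, your stated obstacle is partly an artefact of aiming at a finer topology than the paper actually controls. The flattening step, like your large-$K$ bumps, is small only in the uniform norm: making $g$ constant on a ball costs a H\"older seminorm of the order of the local seminorm of $g$ there, and $d^\al(\cdot,E_{\veps_n})$ itself has seminorm of order one. Hence the paper's argument, too, produces functions with $\Pers_p^p>M$ that are close to $g$ uniformly (within $C^\al(X,\R)$), not close in the $C^\al$-norm; measured by that standard, your construction already achieves the same density without the flattening, since $\norm{h}_\infty\sim K\delta_n^\al\to 0$ for fixed $K$. So the genuine gap in your proposal relative to the paper is not the multi-scale or semicontinuity machinery you anticipate needing, but simply the local flattening trick; the stronger $C^\al$-norm density you set as your target is not what the paper's proof delivers either.
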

\begin{proof}
Without loss of generality, suppose that the uniform set is a ball of radius $1$ inside $X$, denoted $B \subset X$ and construct a function $h$ of persistence $>M$ on this ball. Noting $d = \updim(X)$, by proposition \ref{prop:PackingVsCover} and the definition of the upper-box dimension, for some subsequence of $(\veps_n)_n$ decreasing to $0$, we have
\be
\tilde{C}\veps_n^{-(d-\delta)} \leq N_P(\veps) \leq  C\veps_n^{-(d+\delta)}
\ee
for some constants $C$ and $\tilde{C}$. Note $E_\veps$ the centers of the balls of a maximal packing of radius $\veps$ and define $h_n: B \to \R$ as
\be
h_n(x) := d^\al(x,E_{\veps_n})
\ee
The $\Pers_p$-functional of these functions can be bounded below by
\be
\Pers_p^p(h_n) \geq N_P(\veps_n) \veps_n^p \geq \tilde{C} \veps_n^{p\al-d+\delta} 
\ee
for all $\delta>0$. Since $\al p<d$, this quantity can be made as large as we want and in particular $>M$ by picking a large enough $n$. By the assumptions of the theorem, it is possible to choose the original ball of the construction to have as small a radius as we wish. Note we may perturb any function $f \in C^\al(X,\R)$ by a function close to it which is locally constant on a small enough ball and on this ball, add $h_n$ for $n$ large enough. Since the ball of the construction can be chosen as small as we want, any neighborhood of $f$ contains a function satisfying the condition of the lemma.  
\end{proof}

\begin{proof}[Proof of theorem \ref{thm:genericityofHomDim}]
We are interested in showing that for $p<\updim(X)$, the set 
\be
\mathcal{S}(p) := \{f \in C^\al(X,\R) \, \vert \, \Pers_p^p(f) < \infty \}
\ee
is meager in $C^\al(X,\R)$. Let us start by noticing that
\be
\mathcal{S}(p) = \bigcup_{\Lambda \geq 0} \bigcup_{M \geq 0} \mathcal{S}(p,\Lambda,M) \,,
\ee
where the union is taken over an increasing diverging sequences of $\Lambda$ and $M$ and 
\be
\mathcal{S}(p,\Lambda,M) := \{f \in C^\al_\Lambda(X,\R) \, \vert \, \Pers_p^p(f) \leq M \} \,.
\ee
Furthermore,
\be
\mathcal{S}(p,\Lambda,M) = \bigcap_{k\geq 1} \{f \in C^\al_\Lambda(X,\R) \, \vert \, \Pers_{p,\frac{1}{k}}^p(f) \leq M  \} \,.
\ee
By lemma \ref{lemma:PerspvepsContinuity}, $\Pers_{p,\frac{1}{k}}$ is continuous, thereby guaranteeing that these sets are closed in $C^\al(X,\R)$, and therefore so is their intersection. It remains to show that the $\mathcal{S}(p,\Lambda,M)$ are nowhere dense, but this amounts to finding a dense set of functions for which 
\be
\Pers_{p,\frac{1}{k}}^p(f) \leq M
\ee
is violated for infinitely many $k$. It suffices to find a dense set of functions for which the total $\Pers_p^p(f) >M$ (for $p < \updim(X)$), but the existence of such a dense family is given by lemma \ref{lemma:DenseSetOfInfinitePersp}, showing the result.
\end{proof}
\begin{remark}
The space defined by
\be
E_p= \{f \in C^0(X,\R) \, \vert \, \Lag(f) \leq p\}
\ee
is \textbf{not} a linear space. 
\end{remark}

\subsubsection{Doubling spaces with small convex balls}
One could ask whether the results of genericity of theorem \ref{thm:genericityofHomDim} hold in every degree of homology for $f$ within some class of regularity. This question has been considered in \cite{LipschitzStableLpPers} and more recently in \cite{Skraba_2020} with different degrees of generality. The following theorem is a slight generalization of the two cited results.

\begin{theorem}
\label{thm:genericityTotalHomology}
Let $X$ be a compact, connected geodesic doubling space whose small enough balls are geodesically convex. Denote $d = \updim(X)$, $k \in \N$ and let $f \in C^\al(X,\R)$, then $\Lag_k(f) \leq \frac{d}{\al}$. %Moreover, the set of $f$ with $\Lag_k(f) < \frac{d}{\al}$ is meagre in $C^\al(X,\R)$.  
\end{theorem}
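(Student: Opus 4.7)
The plan is to show that $\Pers_p(H_k(X,f))<\infty$ whenever $p>d/\al$, since this directly implies $\Lag_k(f)\leq d/\al$ by the definition of the persistence index. Writing
\[
\Pers_p^p(H_k(X,f)) \;=\; p\int_0^\infty \veps^{\,p-1}\, N^\veps_k(f)\,d\veps,
\]
where $N^\veps_k(f)$ denotes the number of bars of length at least $\veps$ in the $H_k$-barcode of $(X,f)$, it suffices to establish a polynomial bound of the form $N^\veps_k(f)\leq C\,\veps^{-(d+\delta)/\al}$ for every $\delta>0$ and every sufficiently small $\veps$. Integrating this estimate gives finiteness whenever $p>(d+\delta)/\al$, and letting $\delta\to 0$ yields the theorem.

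Fix $f\in C^\al_\Lambda(X,\R)$ and, given $\veps>0$, set $r:=(\veps/(3\Lambda))^{1/\al}$. Choose $\veps$ small enough that balls $B(x,2r)$ are geodesically convex, hence contractible. I would then cover $X$ by an $r$-net $\{B(x_i,r)\}_{i=1}^{M}$ extracted from a maximal $r$-packing. By Proposition \ref{prop:PackingVsCover} and the definition of the upper-box dimension, $M\leq C_1\, r^{-(d+\delta)}$. Let $\mathcal{N}$ denote the nerve of this cover. The doubling hypothesis bounds the local multiplicity of the cover---only $O(1)$ of the balls can share a common intersection---so the total number of simplices of $\mathcal{N}$, in every dimension, remains $O(r^{-(d+\delta)})$.

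The topological core of the argument is to compare the superlevel persistence of $f$ on $X$ with a simplicial persistence on $\mathcal{N}$. Assign to each simplex $\sigma=\{x_{i_0},\dots,x_{i_j}\}$ the value $\min_\ell f(x_{i_\ell})$ and consider the induced superlevel filtration of $\mathcal{N}$. Finite intersections of geodesically convex balls remain contractible when nonempty, so a nerve-type argument identifies the \v{C}ech homology of each superlevel set of $X$, up to a shift, with the simplicial homology of a subcomplex of $\mathcal{N}$; the Hölder oscillation of $f$ on each ball is at most $\Lambda r^\al=\veps/3$, which gives an $\veps/3$-interleaving between the two $H_k$-persistence modules. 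By the isometry theorem, every bar of length $\geq\veps$ in $H_k(X,f)$ is then matched with a bar of length $\geq\veps/3$ in the nerve persistence. Since the latter has at most $O(r^{-(d+\delta)})$ bars in each degree (bounded by the total simplex count), we obtain $N^\veps_k(f)\leq C_2\,\veps^{-(d+\delta)/\al}$, which is exactly the bound needed.

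The main obstacle is the nerve-interleaving step in the metric (rather than simplicial) setting. One must verify that finite intersections of the chosen balls are contractible at every relevant scale---this is where the geodesic convexity of small balls is essential---and that the induced filtration on $\mathcal{N}$ truly $\veps/3$-interleaves with the \v{C}ech-persistent module of $(X,f)$ in every degree $k$. Once this topological comparison is established, the combination of the doubling property (controlling local multiplicity) and the upper-box dimension (controlling the cardinality of the cover) mechanically produces the polynomial decay of $N^\veps_k(f)$ needed to conclude.
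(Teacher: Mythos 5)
Your proposal is correct in substance and shares all of the paper's key ingredients --- a ball cover at scale $r\sim(\veps/\Lambda)^{1/\al}$, contractibility of nonempty intersections via geodesic convexity (lemma \ref{lemma:SmallCvxBalls}) and the nerve lemma (lemma \ref{lemma:NerveLemma}), the doubling constant to control the number of $k$-simplices, the upper-box dimension to control the number of balls, and the layer-cake integration of $N^\veps_k$ --- but it differs in the topological comparison step. The paper never builds a filtered nerve or an interleaving: for each $r$ it sandwiches $X_r \xhookrightarrow{} \mathcal{U}_r \xhookrightarrow{} X_{r-\veps}$, where $\mathcal{U}_r$ is the union of the cover elements meeting $X_r$, and bounds $\rk(H_*(X_r\to X_{r-\veps}))\leq \dim H_*(\mathcal{U}_r)$ by the size of the $k$-skeleton of the nerve of $\mathcal{U}_r$; only the \emph{static} nerve lemma is needed. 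Your route instead filters the fixed nerve by minimal vertex values, claims a $\veps/3$-interleaving with the superlevel module of $(X,f)$, and matches long bars via the isometry theorem. This is a legitimate alternative and in one respect cleaner (it counts all bars of length $\geq\veps$ at once, whereas the sandwich argument as written bounds ranks over windows $[r-\veps,r]$ and needs a small gridding step to count bars), but it requires precisely the ingredient you flag as the main obstacle: a \emph{functorial} (persistent) nerve lemma, i.e.\ that the nerve equivalences commute, up to homotopy, with the inclusions along the filtration, so that the filtered nerve's persistence is isomorphic to that of the union-of-balls filtration. This is a known result (Chazal--Oudot) for paracompact spaces with good covers and would have to be imported or proved here; the paper's sandwich deliberately avoids it. Two small points: your interleaving constant is fine as stated, since what enters is the center-to-point variation $\Lambda r^\al=\veps/3$ rather than the full oscillation over a ball (which is $\Lambda(2r)^\al$, so your phrase ``oscillation on each ball'' is slightly off but harmless); and the per-degree simplex count carries a factor depending on $k$ and the doubling constant, matching the paper's $(M^{k+1}-M^k)$, which is immaterial for the exponent but should appear in your constant $C_2$.
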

\begin{remark}
The doubling assumption is satisfied for Riemannian manifolds whose Ricci curvature is bounded below, by the Bishop-Gromov inequality. By considering Gromov-Hausdorff limits of Riemannian manifolds with Ricci curvature bounded below, we obtain spaces satisfying the doubling property. Spaces included in this class include, but are not limited to, Riemannian manifolds with conic singularities. In general, it is also possible to obtain less well-behaved spaces. For more on poorly behaved examples, we refer the reader to the works of Xavier Menguy \cite{Menguy_2001,Menguy_Thesis} and to even more recent and poorly behaved examples, such as those described in \cite{Jiang_2021}.
\end{remark}
The proof relies on the two following well-known lemmata. 
\begin{lemma}[Nerve lemma, Lemma 4.11 \cite{Oudot:Persistence}]
\label{lemma:NerveLemma}
Let $X$ be a paracompact space, and let $\mathcal{U}$ be an open cover of $X$ such that the $(k+1)$-fold intersections of elements of $\mathcal{U}$ are either empty or contractible for all $k \in \N$. Then, there is a homotopy equivalence between the nerve of $\mathcal{U}$ and $X$.
\end{lemma}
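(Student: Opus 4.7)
The plan is to build an explicit map from $X$ to $|N(\mathcal{U})|$ via a partition of unity, and then verify it is a homotopy equivalence by interposing a ``blow-up'' (homotopy colimit) between $X$ and the nerve and showing both projections are homotopy equivalences.

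First, by paracompactness of $X$, I would choose a locally finite partition of unity $\{\rho_i\}_{i \in I}$ subordinate to $\mathcal{U} = \{U_i\}_{i \in I}$, so that $\supp(\rho_i) \subset U_i$. Writing $v_i \in |N(\mathcal{U})|$ for the vertex corresponding to $U_i$, set
$$\phi: X \to |N(\mathcal{U})|, \qquad \phi(x) := \sum_{i \in I} \rho_i(x)\, v_i.$$
This is well-defined: only finitely many $\rho_i(x)$ are nonzero at a given $x$, and whenever $\rho_{i_0}(x),\dots,\rho_{i_n}(x)$ are all positive, $x$ lies in $U_{i_0}\cap\dots\cap U_{i_n}$, which is non-empty by hypothesis, so $\{v_{i_0},\dots,v_{i_n}\}$ spans a simplex of $N(\mathcal{U})$. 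Continuity of $\phi$ follows from local finiteness of $\{\rho_i\}$.

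To show $\phi$ is a homotopy equivalence, I would form the blow-up
$$Z := \bigsqcup_{\sigma \in N(\mathcal{U})} U_\sigma \times |\sigma| \,\big/\! \sim,$$
where $\sigma = (i_0,\dots,i_n)$ runs over simplices of the nerve, $U_\sigma := U_{i_0} \cap \dots \cap U_{i_n}$, $|\sigma|$ is the standard geometric $n$-simplex, and $\sim$ is the natural face identification $(x,\partial_k s) \sim (x, s)$ coming from the inclusions $U_\sigma \hookrightarrow U_{\partial_k \sigma}$. There are canonical projections $p: Z \to X$ (collapsing the simplex factors) and $q: Z \to |N(\mathcal{U})|$ (collapsing the space factors), and any partition-of-unity section $s:X \to Z$ of $p$ satisfies $\phi = q\circ s$. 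The plan is then to show $p$ and $q$ are both homotopy equivalences: the fiber of $p$ above $x$ is the realization of the sub-nerve $\{\sigma : x \in U_\sigma\}$, which is a simplex (hence contractible), while the preimage under $q$ of the open star of $v_i$ deformation retracts onto $U_i$, contractible by the hypothesis of the lemma, with a similar statement for higher-dimensional open stars via the contractibility of $U_\sigma$.

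The main obstacle is this last step: promoting fiberwise contractibility to a global homotopy equivalence of the projections. This is precisely where paracompactness matters, since it guarantees that the open covers underlying $p$ and $q$ are numerable, so that Dold's theorem on numerable fibrations (or equivalently Segal's realization lemma for simplicial spaces with contractible terms) applies and produces the required homotopy inverses. Once $p$ and $q$ are known to be homotopy equivalences, so is $\phi = q \circ s$, completing the argument. A subtlety to watch is that the construction of $s$ uses the very same partition of unity that defines $\phi$, which ensures the identity $\phi = q \circ s$ on the nose rather than merely up to homotopy.
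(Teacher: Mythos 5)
The paper offers no proof of this statement: it is imported verbatim as Lemma 4.11 of Oudot's book, so there is nothing internal to compare your argument against. Judged on its own terms, your proposal is the standard partition-of-unity proof of the nerve theorem and is correct in outline. The map $\phi(x)=\sum_i \rho_i(x)v_i$ is well defined and continuous exactly as you say; the Mayer--Vietoris blow-up $Z$ with its two projections is the right intermediary; and you correctly identify the only real pressure point, namely upgrading ``$p$ and $q$ restrict to homotopy equivalences over the pieces of a cover'' to ``$p$ and $q$ are homotopy equivalences,'' which is where paracompactness/numerability enters via Dold's theorem, tom Dieck's gluing criterion, or Segal's projection lemma.

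Two small remarks. For $p:Z\to X$ you do not actually need the heavy machinery: since the fibre over $x$ is the full simplex on $\{i : \rho_i(x)>0\}\subset\{i: x\in U_i\}$, hence convex, the section $s$ built from the same partition of unity satisfies $p\circ s=\id_X$ and $s\circ p\simeq \id_Z$ by the straight-line homotopy in the simplex coordinates (continuity using local finiteness); fibrewise contractibility alone would not suffice without some such argument, so it is worth making this explicit. For $q$, note that the relevant numerable cover of $|N(\mathcal{U})|$ is by open stars of simplices (not just vertices), whose $q$-preimages deformation retract onto the corresponding $U_\sigma$; both sides are then contractible by hypothesis, and the gluing criterion applies. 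With these points spelled out, the argument is complete.
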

\begin{lemma}
\label{lemma:SmallCvxBalls}
Let $(X,d)$ be a geodesic metric space whose balls of radius $\leq \veps$ are geodesically convex. Then, minimal coverings of $X$ by balls of radius $\leq \veps$ are such that the $(k+1)$-fold intersections of elements of $\mathcal{U}$ are either empty or contractible for all $k \in \N$.
\end{lemma}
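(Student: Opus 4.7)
The plan is to prove contractibility by exhibiting an explicit straight-line geodesic contraction of any non-empty finite intersection onto a basepoint; the minimality of the cover plays no essential role, and only the geodesic convexity of each ball of radius $\leq \veps$ is needed.

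First I would verify that any non-empty intersection $Y := U_{i_0} \cap \cdots \cap U_{i_k}$ of balls of radius $\leq \veps$ is itself geodesically convex. Given $x, y \in Y$, geodesic convexity of each $U_{i_j}$ produces a minimizing geodesic from $x$ to $y$ lying inside $U_{i_j}$. Since geodesically convex balls admit a \emph{unique} minimizing geodesic between any two of their points (this is what is typically built into the definition, and in any case follows from the fact that two distinct minimizers between $x$ and $y$ in $X$ would fit in a small convex ball and violate convexity), this single geodesic lies in every $U_{i_j}$ and therefore in $Y$.

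Next, pick a basepoint $p \in Y$ and, for each $q \in Y$, let $\gamma_q \colon [0,1] \to Y$ denote the unique minimizing geodesic with $\gamma_q(0) = p$ and $\gamma_q(1) = q$, reparametrized by $[0,1]$. Define
\[
H \colon Y \times [0,1] \to Y, \qquad H(q, t) := \gamma_q(1-t).
\]
Then $H(\cdot, 0) = \id_Y$ and $H(\cdot, 1) \equiv p$, so once continuity of $H$ is established, $Y$ is contractible and Lemma \ref{lemma:NerveLemma} applies to the cover $\mathcal{U}$.

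The main obstacle will be joint continuity of $H$, which I would establish via Arzelà–Ascoli together with uniqueness of geodesics. Suppose $q_n \to q$ in $Y$. The geodesics $\gamma_{q_n}$ are Lipschitz with constants $d(p, q_n)$ uniformly bounded, hence equicontinuous, and they take values in the compact ambient space $X$. Arzelà–Ascoli then yields a uniform limit $\gamma$ along some subsequence, and $\gamma$ is necessarily a minimizing geodesic from $p$ to $q$. Uniqueness forces $\gamma = \gamma_q$, and a standard subsequence-of-subsequence argument upgrades this to convergence of the full sequence in $C([0,1], X)$. Combined with continuity of $\gamma_q$ in $t$, this gives joint continuity of $H$ in $(q,t)$ and completes the proof.
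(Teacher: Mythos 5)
The paper never actually proves this lemma --- it is invoked as ``well-known'' right before the proof of theorem \ref{thm:genericityTotalHomology} --- so there is no in-paper argument to compare against; judged on its own, your proof is the standard one (cone off a basepoint along minimizing geodesics, get joint continuity of the contraction from Arzel\`a--Ascoli plus uniqueness), and those parts are fine. Your observation that minimality of the cover is irrelevant is also correct, and the compactness needed for Arzel\`a--Ascoli is harmless in context, since the paper only applies the lemma to compact $X$ (a minimal covering presupposes total boundedness anyway).

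The genuine soft spot is the uniqueness claim on which the whole construction rests. If ``geodesically convex'' is read in the weak sense --- any two points of the ball are joined by \emph{some} minimizing geodesic lying in the ball --- then your parenthetical justification (``two distinct minimizers between $x$ and $y$ would fit in a small convex ball and violate convexity'') is simply false: in $(\R^2,\ell^\infty)$ every ball of every radius is geodesically convex in this weak sense, yet minimizing geodesics between typical pairs of points are highly non-unique, so weak convexity of small balls cannot imply uniqueness. Under the weak reading your very first step already collapses: the geodesics furnished by the different balls $U_{i_0},\dots,U_{i_k}$ need not coincide, so you cannot conclude that $Y$ is geodesically convex, and the map $q \mapsto \gamma_q$ used to define $H$ is not even well defined. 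What makes your argument work in the paper's intended setting is that the convexity hypothesis there is the Riemannian-type one (balls below the convexity radius: a \emph{unique} minimizing geodesic of $X$ between any two points of the ball, contained in the ball), i.e.\ uniqueness is part of the hypothesis rather than a consequence of it. So your proof is correct provided you state the strong form of geodesic convexity explicitly; as written, the claimed derivation of uniqueness from convexity is a gap, and with only weak convexity the lemma would require a different (or at least much more careful) argument.
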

\begin{proof}[Proof of theorem \ref{thm:genericityTotalHomology}]
The proof is an immediate consequence of the proof of theorem \ref{thm:genericityofHomDim}, where we only need to modify the proof of lemma \ref{lemma:PerspvepsContinuity}. For this, it is sufficient to bound the number of bars in the persistence diagram of the $k$th degree in homology of length $\geq \veps$, $N_k^\veps$. On a given a minimal covering $\mathcal{U}$ of $X$ by balls of radius $\left(\frac{\veps}{4 \norm{f}_{C^\al}}\right)^{1/\al}$, $f$ varies by at most $\frac{\veps}{2}$ inside each ball. Given any $r \in \R$, construct the set $\mathcal{U}_r$ consisting in the union of all balls of $\mathcal{U}$ which intersect $X_r$. From this, we get a chain of inclusions
\be
X_r \xhookrightarrow{} \mathcal{U}_r \xhookrightarrow{} X_{r-\veps} \,,
\ee
which induces a chain of maps at the homology level. In particular, by functoriality of $H_*$,
\begin{align*}
\rk(H_*(X_r \to X_{r-\veps})) &\leq \rk(H_*(X_r \to \mathcal{U}_r)) \vee \rk(H_*(\mathcal{U}_r \to X_{r-\veps})) \\
&\leq \dim(H_*(\mathcal{U}_r))\,,
\end{align*}
but for small enough $\veps$, the covering's homology is the homology of its nerve by lemmas \ref{lemma:NerveLemma} and \ref{lemma:SmallCvxBalls} so this dimension is bounded above by the cardinality of the nerve. It follows that $N_k^\veps$ is bounded above by the cardinality of the $k$-skeleton of the nerve of such a minimal covering. Since the space is doubling, this yields the upper bound 
\be
N^\veps_k \leq (M^{k+1}-M^{k})\;\mathcal{N}_X\!\left(\left(\frac{\veps}{4 \norm{f}_{C^\al}}\right)^{1/\al}\right) \,,
\ee
where $M$ is the doubling constant of $X$. The rest of the proof follows from previous arguments without extra difficulty.
\end{proof}
\begin{remark}
\label{rmk:NonDoubling}
If the space is not supposed to be doubling, the only bound we have on $N_k^\veps$ is given by $\mathcal{N}_X^{k+1}$, which yields an analogous statement for $\Lag_k(f) \leq \frac{d(k+1)}{\al}$.
\end{remark}
%The proof is an immediate consequence of \cite{LipschitzStableLpPers}, where for the $\al$-H\"older case we replace the distance $d$ on $X$ by $d^\al$ and the genericity result is an immediate consequence of theorem \ref{thm:genericityofHomDim}. 
Under a supplementary assumption, we can show that the inequality obtained in theorem \ref{thm:genericityTotalHomology} is in fact generically an equality. As before, the genericity result relies on the existence of functions whose $\Pers_p$ functional for $p<\frac{d}{\al}$ is arbitrarily large. For this we rely on the following theorem of Divol and Polonik.
\begin{theorem}[Divol and Polonik, \cite{Polonik_2019}]
\label{thm:DivolPolonik}
Let $\mu$ be a bounded probability measure on $[0,1]^d$ and let $\mathbf{X}_n := (X_1, \cdots, X_n)$ be a vector of i.i.d. samples of $\mu$, then for $0<p<d$ and $0\leq k < d$ then almost surely,
\be
\lim_{n \to \infty} n^{-1+\frac{p}{d}}\Pers_p^p(H_k([0,1]^d,d(-,\mathbf{X}_n)) \to \Pers_p^p(\nu^\mu_p) \,,
\ee
for some non-degenerate Radon measure depending on $p$ and the probability measure $\mu$, $\nu^\mu_p$ on $\XX$. 
\end{theorem}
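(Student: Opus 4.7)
The natural proof strategy is to view $\Pers_p^p(H_k([0,1]^d, d(-,\mathbf{X}_n)))$ as a (generalized) Euclidean functional of the point cloud $\mathbf{X}_n$ and apply the Penrose--Yukich machinery for stabilizing functionals on Poisson and binomial processes. Concretely, the plan is to first rescale: replacing $\mathbf{X}_n$ by the blown-up cloud $n^{1/d} \mathbf{X}_n$ in the cube $[0, n^{1/d}]^d$, the nearest-neighbor distances become of order $1$, and the elementary identity $\Pers_p^p(H_k(X, c \cdot d)) = c^p \Pers_p^p(H_k(X,d))$ yields the scaling factor $n^{-1+p/d}$. After rescaling, one writes
\be
n^{-1+p/d} \Pers_p^p(H_k([0,1]^d, d(-,\mathbf{X}_n))) = \frac{1}{n}\sum_{i=1}^n \xi_k\!\left(n^{1/d}X_i\,;\, n^{1/d}\mathbf{X}_n\right),
\ee
where $\xi_k(x; \mathcal{Y})$ is a ``local contribution'' of the point $x$ to the $p$-persistence of the \v{C}ech filtration driven by distance to $\mathcal{Y}$. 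Formally, one may define $\xi_k$ by splitting each bar equally across the (finitely many) points whose appearance/disappearance creates it, following the matching between critical simplices and points in the standard discrete Morse decomposition of a \v{C}ech filtration.

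The second step is to establish the \emph{stabilization} property for $\xi_k$: there is an almost surely finite random radius $R$ such that $\xi_k(0; \mathcal{P} \cup \{0\})$ depends only on $\mathcal{P} \cap B(0,R)$, for $\mathcal{P}$ a homogeneous Poisson process. This is where persistence-specific input is needed. I would argue as follows: any $k$-dimensional bar of length $\geq \veps$ in the \v{C}ech filtration of $\mathcal{P}$ is witnessed by a configuration of at most $k+2$ points whose mutual distances are $O(\veps)$, and conversely, by a volume/packing argument the number of such bars emanating from a small ball is bounded uniformly in the ambient configuration. Theorem \ref{thm:genericityTotalHomology} (applied with $\al=1$, which is the correct regularity for the distance function $d(-,\mathcal{P})$) ensures the resulting sums are finite for $p>d \geq k+1$. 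Combined with the fact that adding points far from $0$ cannot modify the local bars of length less than half that distance, one gets the desired local dependence and, moreover, a moment bound of order $(1+\abs{\mathcal{P}\cap B(0,R)})^q$ for any $q$.

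Third, apply the Penrose--Yukich strong law for stabilizing functionals of marked binomial point processes with intensity $n\mu$. This gives almost sure convergence
\be
\frac{1}{n}\sum_{i=1}^n \xi_k(n^{1/d} X_i; n^{1/d} \mathbf{X}_n) \xrightarrow[n \to \infty]{\text{a.s.}} \int_{[0,1]^d} \EE\!\left[\xi_k(0; \mathcal{P}_{\mu(x)})\right] \mu(x)\, dx,
\ee
where $\mathcal{P}_\lambda$ is a homogeneous Poisson process of intensity $\lambda$. Using the scaling $\mathcal{P}_\lambda \stackrel{d}{=} \lambda^{-1/d}\mathcal{P}_1$, the inner expectation factorizes as $\mu(x)^{-p/d}\,\EE[\xi_k(0;\mathcal{P}_1)]$, and bar-length-by-bar-length this identification lifts from scalars to measures on $\XX$, defining $\nu_p^\mu$ as the image of a universal limiting measure $\nu_p^1$ under the pointwise weighting by $\mu(x)^{1-p/d}$. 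Non-degeneracy follows because the Poisson \v{C}ech complex almost surely has nontrivial $H_k$-features at arbitrarily small scales whenever $k<d$, so $\EE[\xi_k(0;\mathcal{P}_1)]>0$.

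The hard part is step two: verifying stabilization for persistent homology of \v{C}ech filtrations genuinely, and producing a moment bound strong enough for the Penrose--Yukich theorem to apply. The difficulty is that $\xi_k$ is not a simple local combinatorial quantity like a nearest-neighbor length; it depends on the global matching between birth and death simplices. One surmounts this by using the interval decomposition: each $k$-bar has a death simplex which is a $(k+1)$-simplex of diameter equal to its death time, and standard arguments (of e.g. Hiraoka--Shirai or Divol--Chazal) localize its birth simplex within a neighborhood of comparable size. The uniform moment bound then reduces to a classical estimate on the number of small-diameter simplices spanned by Poisson points near the origin, which is controlled by the doubling property of $[0,1]^d$ exactly as in Remark \ref{rmk:NonDoubling} and the bound $N^\veps_k \leq C\,\mathcal{N}_X((\veps/\Lambda)^{1/\al})$ from the proof of Theorem \ref{thm:genericityTotalHomology}.
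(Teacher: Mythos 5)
First, note that the paper does not prove this statement at all: it is quoted from Divol and Polonik \cite{Polonik_2019} and used as a black box in the proof of Theorem \ref{thm:SchweinhartAns}, so there is no internal proof to compare yours against; your proposal has to be judged against the cited literature on its own merits.

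Judged that way, the sketch has a genuine gap exactly at the step you yourself flag as hard. The reduction to a sum of per-point contributions $\xi_k$ is never actually carried out: ``splitting each bar equally across the points whose appearance/disappearance creates it'' is not a definition with the measurability and locality properties that Penrose--Yukich stabilization requires, and the two facts you invoke to obtain stabilization are not correct as stated. A $k$-bar of length at least $\veps$ is not witnessed by $k+2$ points at mutual distance $O(\veps)$ (the death simplex has circumradius equal to the death time, which can be much larger than the bar length), and the number of bars attached to a small ball is not bounded uniformly in the ambient configuration --- it grows with the local number of points --- so the claimed all-moments bound does not follow. The appeal to Theorem \ref{thm:genericityTotalHomology} ``for $p>d$'' is also off target: the theorem to be proven lives in the regime $0<p<d$, where $\Pers_p^p$ diverges like $n^{1-p/d}$, and what is needed is quantitative control of the profusion of short bars, not finiteness of the functional for large $p$. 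Moreover, the standard Penrose--Yukich results give weak or $L^2$ laws of large numbers; the almost-sure convergence asserted in the statement needs additional input (concentration or complete-convergence arguments) that the sketch does not supply. For the record, the actual proof of Divol and Polonik does not go through per-point stabilization: it builds on the Hiraoka--Shirai--Trinh convergence of persistence diagrams of stationary point processes (nearly additive set functionals plus ergodicity), compares the non-homogeneous binomial sample with locally homogeneous Poisson processes via a blocking/coupling argument, and then passes from vague convergence of the rescaled diagram measures to convergence of $\Pers_p^p$ by separately controlling small and large bars under the hypotheses $p<d$ and boundedness of the density; this is precisely where the limiting measure $\nu_p^\mu$ appears. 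Your scaling heuristic and the predicted form of the limit are right, but the stabilization-and-moments core of your route would have to be constructed essentially from scratch for the argument to close.
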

With this result we are now ready to prove the following theorem.
\begin{theorem}
\label{thm:SchweinhartAns}
Let $X$ be a compact Riemannian manifold of dimension $d$. Then, generically in the sense of Baire in $C^\al(X,\R)$, for any $0 \leq k <d$, $\Lag_k(f)= \frac{d}{\al}$.
\end{theorem}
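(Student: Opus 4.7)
The plan is to mirror the strategy of Theorem \ref{thm:genericityofHomDim}. First, since a compact Riemannian manifold $X$ is geodesic, doubling (by Bishop–Gromov, with constant depending on a lower Ricci bound), and has geodesically convex small balls (via its convexity radius), Theorem \ref{thm:genericityTotalHomology} immediately yields the upper bound $\Lag_k(f) \leq d/\al$ for every $f \in C^\al(X,\R)$ and every $k \geq 0$. It therefore suffices to show that, for each $0 \leq k < d$, the set $\mathcal{M}_k := \{f \in C^\al(X,\R) \,:\, \Lag_k(f) < d/\al\}$ is meagre; the intersection over the finitely many admissible $k$ then yields the desired generic statement.

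Decomposing $\mathcal{M}_k = \bigcup_{p < d/\al,\, p \in \Q} \bigcup_{\Lambda,M \geq 0} \{f \in C^\al_\Lambda(X,\R) \,:\, \Pers_p^p(H_k(X,f)) \leq M\}$, the goal reduces to showing each set in this union is closed and nowhere dense. For the closedness step, I adapt Lemma \ref{lemma:PerspvepsContinuity} to degree $k$: the uniform bound $N_k^\veps \leq (M^{k+1}-M^k) \mathcal{N}_X((\veps/(4\Lambda))^{1/\al})$ established inside the proof of Theorem \ref{thm:genericityTotalHomology}, combined with bottleneck stability, makes $\Pers_{p,\veps}^p(H_k(X,\cdot))$ continuous on $C^\al_\Lambda(X,\R)$ for each $\veps>0$, so $\{f \in C^\al_\Lambda : \Pers_p^p(H_k(X,f)) \leq M\} = \bigcap_{n \geq 1} \{\Pers_{p,1/n}^p(H_k(X,\cdot)) \leq M\}$ is closed.

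For the nowhere dense step, I need to exhibit, in every $C^\al$-neighborhood of every $f \in C^\al(X,\R)$, a function with arbitrarily large $\Pers_p^p(H_k(X,\cdot))$ whenever $p < d/\al$. The construction is as follows: in a small chart $U \subset X$ (bi-Lipschitz equivalent to a subset of $\R^d$), sample i.i.d. uniform points $\mathbf{X}_n$ and set $g_n(x) := d_X(x,\mathbf{X}_n)^\al$ on $U$, extended by a smooth cutoff to $X$. The function $g_n$ is uniformly $\al$-Hölder. A local modification argument in the spirit of the proof of Lemma \ref{lemma:DenseSetOfInfinitePersp} (smoothing $f$ to be locally constant on a small ball, then superposing a rescaled copy of $g_n$ with small $C^\al$ norm) then produces the desired perturbation of $f$.

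The main obstacle is proving that $\Pers_p^p(H_k(X,g_n)) \to \infty$ almost surely for $p < d/\al$. The bars of $g_n$ are the images under $t \mapsto t^\al$ of those of $d_X(-,\mathbf{X}_n)$, so that $\Pers_p^p(H_k(X,g_n)) = \sum_i (d_i^\al - b_i^\al)^p$ where $[b_i,d_i]$ runs through the $H_k$-bars of $d_X(-,\mathbf{X}_n)$. Rescaling $\hat{b}_i := n^{1/d} b_i$ and $\hat{d}_i := n^{1/d} d_i$ yields
\[
\Pers_p^p(H_k(X,g_n)) = n^{-p\al/d} \sum_i (\hat{d}_i^\al - \hat{b}_i^\al)^p,
\]
and Theorem \ref{thm:DivolPolonik} tells us that the empirical measure of the rescaled bars $(\hat{b}_i, \hat{d}_i)$ converges to a nondegenerate Radon measure on $\XX$, while the number of bars grows like $n$. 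The technical subtlety is that Divol–Polonik's theorem is stated for the cost $(\hat{d}-\hat{b})^{p}$ rather than for $(\hat{d}^\al - \hat{b}^\al)^p$; I would handle this either by rerunning their subadditive/scaling argument with the new cost, or by noting that a positive fraction of the rescaled bars satisfies $\hat{b}_i \leq C\hat{d}_i$, so that $\hat{d}_i^\al - \hat{b}_i^\al \geq c_\al (\hat{d}_i - \hat{b}_i)^\al$ on that subset, and then applying Divol–Polonik with exponent $p\al < d$ to get $\sum_i (\hat{d}_i^\al - \hat{b}_i^\al)^p \gtrsim n$, hence $\Pers_p^p(H_k(X,g_n)) \gtrsim n^{1 - p\al/d} \to \infty$.
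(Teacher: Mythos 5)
Your proposal is correct and follows essentially the same route as the paper: the upper bound comes from theorem \ref{thm:genericityTotalHomology} (compact Riemannian manifolds are doubling with geodesically convex small balls), the Baire-category decomposition and degree-$k$ continuity of $\Pers_{p,\veps}^p$ mirror theorem \ref{thm:genericityofHomDim}, and the dense family of large-persistence functions is obtained from Divol--Polonik by replacing the filtration $d(-,\mathbf{X}_n)$ with $d(-,\mathbf{X}_n)^\al$, which is precisely the paper's ``tweaking the filtration'' step (your first way of handling the modified cost). Your alternative comparison argument would need the rescaled deaths $\hat{d}_i$ bounded above (not merely $\hat{b}_i \leq C\hat{d}_i$) for the inequality $\hat{d}_i^\al - \hat{b}_i^\al \gtrsim (\hat{d}_i-\hat{b}_i)^\al$ to hold, but since the primary route suffices this does not affect correctness.
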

\begin{proof}
The proof of genericity is essentially the same as that of theorem \ref{thm:genericityofHomDim}, with the exception that we now need to modify lemma \ref{lemma:DenseSetOfInfinitePersp}. The existence of a function $h$ with arbitrarily large $\Pers_p$-functional for $p<\frac{d}{\al}$ on any small ball is given by Divol and Polonik's construction by tweaking the filtration in their proofs from being the distance $d$ to $d^\al$. As before, this entails the genericity result for the set of functions of $C^\al$ satisfying $\Lag_k(f) \geq \frac{d}{\al}$. Compact Riemannian manifolds have strictly positive convexity radii and Ricci curvature bounded below, and so satisfy the hypotheses of theorem \ref{thm:genericityTotalHomology}, applying the theorem yields the desired equality.  
\end{proof}

\subsection{A partial answer to a question by Schweinhart}
\label{sec:Schweinhart}
In \cite{Schweinhart_2019}, Schweinhart introduces a notion of persistent homology dimension of a metric space $X$, defined as follows. 
\begin{definition}[Schweinhart's definition of $\dim_{\PH}^k$, \cite{Schweinhart_2019}]
Let $X$ be a bounded subset of a metric space. The $k$th homological dimension of $X$ is
\be
\dim_{\PH}^k(X):= \sup_{\xx} \inf_p \; \{\Pers_p(H_k(X,d(-,\xx))) <\infty\} \,,
\ee
where the supremum is taken over all finite sets of points $\xx$ of $X$.
\end{definition}
Given our previous results, we suggest the following modification to this definition, for reasons which will become apparent later. 
\begin{definition}[$k$th homological dimension of $X$]
\label{def:HomDim}
Let $X$ be a bounded subset of a metric space. The $k$th homological dimension of $X$ is defined as
\be
\dim_{\PH}^k(X):= \sup_{f \in \Lip_1(X)} \Lag_k(f) \,,
\ee
where $\Lip_1(X)$ denotes the set of Lipschitz functions with Lipschitz constant $\leq 1$.
\end{definition}
Theorem \ref{thm:genericityTotalHomology} already allows us to partially answer Schweinhart's Question 5 \cite{Schweinhart_2019}. However, this is not a complete answer, because one should make sure that there are Lipschitz functions on $X$ on the class of metric spaces as those of those of theorem \ref{thm:genericityTotalHomology} such that the inequality $\Lag_k(f) \leq d$ is saturated, or saturated to within $\delta$ for all $\delta >0$. Without the assumption that $X$ is doubling, an interesting question is whether the bound found is optimal: the proof of the theorem suggests that if such metric spaces exist, they cannot be of ``bounded geometry'' and are relatively pathological.

As we saw in theorem \ref{thm:SchweinhartAns}, this bound is saturated for any integer $0 \leq k < \updim(X)$ under the assumption that $X$ is a compact manifold. Thereby entailing
\be
\dim_{\PH}^k(X) = \updim(X)
\ee
for such $X$. Here, the notions of homological dimension of Schweinhart and our own coincide exactly, as the genericity result is proven via distance functions to point clouds. This thus establishes sufficient conditions for this equality to hold, albeit not necessary ones.

\section{Distance notions and stability properties of trees and diagrams}

\subsection{Some elements of optimal transport}
\subsubsection{Defining optimal partial transport}
Let us follow the exposition by Divol and Lacombe \cite{Divol_2019}, and quickly introduce optimal \textit{partial} transport, which extends optimal transport to measures of \textit{a priori} different masses (which may be potentially infinite), for a detailed account of the theory, we refer the reader to the cited article, but also to the works of different authors \cite{Figalli_2009,Chizat_2015,Kondratyev_2016}. Divol and Lacombe build on the work of Figalli \cite{FIGALLI2010107} and extend Wasserstein distances to Radon measures supported on open proper subsets $\mathcal{X}$ of $\R^n$, whose boundary is denoted by $\partial \mathcal{X}$ (and $\overline{\mathcal{X}} := \mathcal{X} \sqcup \partial \mathcal{X}$). The general idea is that we should look at $\del \XX$ as a reservoir of infinite mass, capable of accomodating for any disparity in the mass of the measures considered. In this way, if two Radon measures $\mu$ and $\nu$ have different mass, we can form still define a transport map from one measure to the other by sending the mass surplus to the boundary $\del \XX$. Symbolically,

\begin{definition}{\cite[Problem 1.1]{FIGALLI2010107}} \label{def:OT_for_Radon_measures}
  Let $p \in [1, +\infty)$. Let $\mu, \nu$ be two Radon measures supported on $\mathcal{X}$ satisfying
  \[ \int_\mathcal{X} d(x,\partial \mathcal{X})^p \; d \mu(x) < +\infty, \quad \int_\mathcal{X} d(x,\partial \mathcal{X})^p \;d \nu(x) < +\infty.\]
  The set of \textbf{admissible transport plans} $\Gamma(\mu,\nu)$ is defined as the set of Radon measures $\pi$ on $\overline{\mathcal{X}} \times \overline{\mathcal{X}}$ satisfying
\[ \pi(A\times \overline{\mathcal{X}}) = \mu(A) \quad \text{ and } \quad \pi(\overline{\mathcal{X}}  \times B) = \nu(B). \]
   for all Borel sets $A,B \subset \mathcal{X}$. Furthermore, the cost of $\pi \in \Gamma(\mu,\nu)$ is defined as
\begin{equation}
C_p(\pi) := \int_{\overline{\mathcal{X}} \times \overline{\mathcal{X}}} d(x,y)^p \;d \pi(x,y).
\label{eq:costDefinition}
\end{equation}
The \textbf{optimal transport distance $d_p(\mu,\nu)$} is defined as
\be
d_p(\mu,\nu) := \left( \inf_{\pi \in \Gamma(\mu,\nu)} C_p(\pi) \right)^{1/p}.
\label{eq:optimalCostPbm}
\ee
Plans $\pi \in \Gamma(\mu,\nu)$ realizing the infimum in equation \ref{eq:optimalCostPbm} are called \textbf{optimal}.
\end{definition}

\begin{definition}
The space of Radon measures on $\XX$ will be denoted $\mathcal{D}(\XX)$ (or simply $\DD$ if $\XX$ is clear from context). We also introduce the following spaces
\be
\DD_p := \left\{\mu \in \mathcal{D} \, \Bigg\vert \, \int_{\XX} d^p(x,\del \XX)\; d\mu(x) <\infty \right\} \,.
\ee
We further define $\DD_\infty$ as the space of Radon measures with compact support.
\end{definition}
% \begin{definition}{\cite[Definition 2.2]{Divol_2019}}
% \label{def:induced_transport_map}
% Let $\mu, \nu \in \mathcal{M}$. Consider $g : \overline{\mathcal{X}} \to \overline{\mathcal{X}}$ a measurable function satisfying for all Borel set $B \subset \mathcal{X}$
% \be
%  \mu(g^{-1}(B) \cap \mathcal{X}) + \nu(B \cap g(\partial \mathcal{X})) = \nu(B). 
% \ee
% Define for all Borel sets $A, B \subset \overline{\mathcal{X}}$, 
% \be
%  \pi(A \times B) = \mu(g^{-1}(B) \cap \mathcal{X} \cap A) + \nu(\mathcal{X} \cap B \cap g(A \cap \partial \mathcal{X})). 
% \ee
% $\pi$ is called the transport plan \textit{induced} by the \textit{transport map} $g$.
% \end{definition}
\begin{remark}
\label{rmk:limitptoinfty}
A proof by Th\'eo Lacombe shows that for optimal partial transport distances $d_p$ also satisfy $d_p \xrightarrow{p\to \infty}d_\infty$. Indeed, for any $\pi \in \Gamma(\mu,\nu)$ 
\be
C_p(\pi)  \xrightarrow{p \to \infty} C_\infty(\pi)
\ee
The space $\Gamma(\mu,\nu)$ is sequentially compact \cite[Proposition 3.2]{Divol_2019}, so up to extraction of a subsequence, $(\pi_p)_p$ admits a limit $\pi_\infty$. Finally, if $\pi^*$ is an optimal transport for the cost function $C_\infty$, then 
\be
C_\infty(\pi^*) = \lim_{p \to \infty} C_p(\pi^*) \geq \lim_{p \to \infty} C_p(\pi_p) = C_\infty(\pi_\infty) \,,
\ee
so $\pi_\infty$ also achieves $\inf_\pi C_\infty(\pi)$, showing the desired result.
\end{remark}

When considering optimal partial transport, there may be complications with respect to the conventional theory of optimal transport, because the measures may have infinite mass. This poses some problems, among others because of the unavalaibility of Jensen's inequality, which may render certain results of the classical theory false, or require alternative proofs. Luckily, most classical results we will need can be adapted to this more general setting. 

\subsubsection{Some results on optimal transport distances}
To distinguish the theory of optimal transport from that of optimal \textit{partial} transport, let us introduce the following notation. 
\begin{notation}
Let $(X,\delta)$ be a Polish metric space. Denote $\mathcal{P}(X)$ (or simply $\mathcal{P}$ is $X$ is clear from context) the set of \textbf{probability measures on $X$} and define
\be
\mathcal{P}_p(X) := \left\{ \mu \in \mathcal{P} \, \Bigg\vert \, \int_X \delta^p(x,x_0) \; d\mu(x) < \infty \right\} 
\ee
for some $x_0 \in X$ (note that this definition does not depend on $x_0$). Once again, we may omit $X$ if it is clear from context. For any two measures $\mu, \nu \in \mathcal{P}(X)$, slightly abusing then notation, we may define the \textbf{space of transport maps $\Gamma(\mu,\nu)$} to be the space of probability measures on $X^2$ having marginals $\mu$ and $\nu$. We equip the space $\mathcal{P}_p(X)$ with a Wasserstein distance, defined as
\be
W_{p,\delta}(\mu,\nu) := \inf_{\pi \in \Gamma(\mu,\nu)} \norm{\delta}_{L^p(\pi)} \,.
\ee
For the rest of this paper, the distance indicated by $W$ will always reserved to classical Wasserstein distances between \textit{probability} measures, whereas the distance denoted $d_p$ will always refer to the notion of Wasserstein distances between general Radon measures, previously described in the context of optimal \textit{partial} transport.

Many statements are valid whether we are in the optimal transport or the optimal \textit{partial} transport setting. For this reason, we introduce the following generic notation along with the following dictionary to transpose statements to one setting or another.
\begin{table}[h!]
\centering
\begin{tabular}{||c c c ||} 
 \hline
 Generic notation & Optimal transport & Optimal partial transport \\ [3pt]
 \hline
 $(Y,d)$ & $(X,\delta)$ & $(\XX,d)$ \\ [5pt]
 $\del Y$ & $x_0 \in X$ & $\del \XX$ \\ [5pt]
 $\OT_p$ & $W_{p,\delta}$ & $d_p$ \\ [5pt]
 $\Mel(Y)$ & $\mathcal{P}(X)$ & $\DD(\XX)$ \\ [5pt]
 $\Mel_p(Y)$ & $\mathcal{P}_p(X)$ & $\DD_p(\XX)$ \\ [5pt]
 \hline
\end{tabular}
\caption{Dictionary between optimal and optimal partial transport.}
\label{table:OTvsOPT}
\end{table}
\end{notation}

\begin{proposition}
For any $1\leq p <\infty$, $\OT_p^p$ is convex, in the sense that for every $\mu_1, \mu_2, \nu \in \Mel_p$ and $t \in [0,1]$,
\be
\OT_{p}^p(t\mu_1 + (1-t)\mu_2,\nu) \leq t \OT_{p}^p(\mu_1,\nu) + (1-t) \OT_{p}^p(\mu_2,\nu) \,.
\ee
Moreover, if $\nu_1, \nu_2 \in \mathcal{M}_p$,
\be
\OT_{p}^p(t\mu_1 + (1-t)\mu_2,t\nu_1 +(1-t)\nu_2)  \leq t \OT_p^p(\mu_1,\nu_1) + (1-t) \OT_p^p(\mu_2,\nu_2)\,.
\ee
\end{proposition}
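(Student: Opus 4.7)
My plan is to derive the first inequality from the second by setting $\nu_1 = \nu_2 = \nu$ (the convex combination then collapses to $\nu$), so I focus on the joint convexity statement. The natural approach is the standard transport-plan gluing: take $\pi_1 \in \Gamma(\mu_1, \nu_1)$ and $\pi_2 \in \Gamma(\mu_2, \nu_2)$ optimal for $\OT_p$ (existence of optimal plans in the partial transport setting is ensured by the sequential compactness of $\Gamma(\mu,\nu)$ cited in Remark~\ref{rmk:limitptoinfty}; in the classical Wasserstein case it is standard), and consider the Radon measure
\[
\pi := t\pi_1 + (1-t)\pi_2 \,,
\]
which is well-defined since both $\pi_i$ are Radon measures on $\overline{Y} \times \overline{Y}$.

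Next I would verify that $\pi \in \Gamma(t\mu_1 + (1-t)\mu_2, t\nu_1 + (1-t)\nu_2)$. This is an immediate linearity computation on the marginals: for every Borel $A \subset Y$,
\[
\pi(A \times \overline{Y}) = t\pi_1(A \times \overline{Y}) + (1-t)\pi_2(A \times \overline{Y}) = t\mu_1(A) + (1-t)\mu_2(A),
\]
and analogously for the second marginal. In the partial transport case, no constraint is imposed on the $\partial \XX \times \partial \XX$ block, so the marginal conditions required in Definition~\ref{def:OT_for_Radon_measures} are exactly verified; in the classical case the argument is identical.

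Then I would use linearity of the cost functional $C_p$ defined in equation~\eqref{eq:costDefinition}. Since $d(x,y)^p$ is a non-negative measurable function and the measures are Radon,
\[
C_p(\pi) = \int d(x,y)^p \, d(t\pi_1 + (1-t)\pi_2) = t \, C_p(\pi_1) + (1-t)\, C_p(\pi_2) \,.
\]
Since $\pi$ is admissible for the transport problem from $t\mu_1 + (1-t)\mu_2$ to $t\nu_1 + (1-t)\nu_2$, the infimum in~\eqref{eq:optimalCostPbm} is bounded above by $C_p(\pi)$, giving exactly the joint convexity inequality; optimality of $\pi_1, \pi_2$ then yields the right-hand side $t\, \OT_p^p(\mu_1,\nu_1) + (1-t) \,\OT_p^p(\mu_2,\nu_2)$.

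There is really no hard step here; the only subtlety worth flagging is existence of optimal plans, and if one wishes to avoid invoking it one can replace $\pi_1, \pi_2$ by $\eps$-optimal plans and let $\eps \to 0$, obtaining the same inequality. The argument is uniform across the dictionary of Table~\ref{table:OTvsOPT} since both the linearity of marginals and of the cost functional are formal properties that do not distinguish probability measures from general Radon measures.
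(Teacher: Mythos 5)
Your proof is correct and follows essentially the same route as the paper: mixing the two transport plans $t\pi_1+(1-t)\pi_2$, checking the marginals by linearity, and using linearity of the cost $C_p$. The only cosmetic difference is that you prove the joint convexity statement first (via optimal or $\eps$-optimal plans) and specialize $\nu_1=\nu_2=\nu$, whereas the paper proves the first inequality directly with arbitrary plans and an infimum --- thereby sidestepping any appeal to existence of optimal plans --- and notes the second is analogous; your $\eps$-optimal remark closes that same gap.
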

\begin{proof}
For every $\pi_i \in \Gamma(\mu_i,\nu)$, $t\pi_1 + (1-t)\pi_2 \in \Gamma(t\mu_1+(1-t)\mu_2,\nu)$, so
\be
\OT_p^p(t\mu_1+(1-t)\mu_2,\nu) \leq t \int_{Y^2} d(x,y)^p \;d\pi_1(x,y) + (1-t)\int_{Y^2} d(x,y)^p \;d\pi_2(x,y) \,,
\ee
which yields the result by taking the infimum over $\pi_1$ and $\pi_2$ on the right-hand side. The second convexity result is obtained by an analogous proof.
\end{proof}
\begin{remark}
Convexity does not hold for $p=\infty$. By taking the $\frac{1}{p}$-th power of both sides and letting $p\to \infty$ in the inequality above, all that we may conclude is that
\be
\OT_\infty(t\mu_1 +(1-t)\mu_2,\nu) \leq \max\{\OT_\infty(\mu_1,\nu),\OT_\infty(\mu_2,\nu)\} \,.
\ee
\end{remark}
\begin{theorem}[$\OT_p$ for $p=\infty$, \cite{Givens_1984}]
\label{thm:Winftydelta}
The distance obtained on $\mathcal{M}_\infty(Y)$ from $\OT_p$ by taking $p\to \infty$ is well-defined and coincides with the distance defined by
\be
\OT_{\infty}(\mu,\nu) = \inf_{\pi \in \Gamma(\mu,\nu)} \norm{d}_{L^\infty(\pi)} \,.
\ee
Furthermore, we have the following characterization of $\OT_{\infty}$
\be
\OT_{\infty}(\mu,\nu) = \inf\left\{r>0 \; \vert \; \forall U \subset Y \text{ open, } \mu(U) \leq \nu(U^r) \text{ and } \nu(U) \leq \mu(U^r)\right\}\,,
\ee
where $U^r$ denotes an open tubular neighborhood of radius $r$ around $U$. 
\end{theorem}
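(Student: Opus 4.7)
The theorem comprises two independent claims: (i) that $\lim_{p\to\infty}\OT_p(\mu,\nu)$ exists and equals $\inf_{\pi\in\Gamma(\mu,\nu)}\|d\|_{L^\infty(\pi)}$, and (ii) the characterization in terms of tubular neighborhoods. For (i), my plan is to adapt the argument already sketched in Remark~\ref{rmk:limitptoinfty}. For any fixed $\pi\in\Gamma(\mu,\nu)$, the standard $L^p$--$L^\infty$ comparison (applied after restricting to $\{d>0\}$, which has finite $\pi$-mass thanks to the marginal constraints) gives $\|d\|_{L^p(\pi)} \to \|d\|_{L^\infty(\pi)}$ as $p\to\infty$, so $\limsup_p \OT_p(\mu,\nu) \leq \|d\|_{L^\infty(\pi)}$, and taking the infimum over $\pi$ yields $\limsup_p \OT_p \leq \OT_\infty:=\inf_\pi\|d\|_{L^\infty(\pi)}$. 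For the matching lower bound, extract a sequence $(\pi_p)_p$ of near-optimizers for $\OT_p$; by the sequential compactness of $\Gamma(\mu,\nu)$ (cf. Remark~\ref{rmk:limitptoinfty}), a subsequence converges weakly to some $\pi_*\in\Gamma(\mu,\nu)$. A H\"older-interpolation argument applied to $\|d\|_{L^q(\pi_{p_k})}$ for fixed $q<p_k$, combined with weak convergence, gives $\|d\|_{L^\infty(\pi_*)} \leq \liminf_k \OT_{p_k}(\mu,\nu)$ after letting $q\to\infty$, hence $\OT_\infty\leq \liminf_p \OT_p$, so the limit exists and coincides with $\OT_\infty$.

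For the easy direction of (ii), let $\rho$ denote the right-hand infimum. Given any $\pi\in\Gamma(\mu,\nu)$ with $\|d\|_{L^\infty(\pi)}\leq r$, any open $U\subset Y$, and any $\epsilon>0$, the support constraint forces every pair $(x,y)\in\supp\pi$ with $x\in U$ to satisfy $y\in U^{r+\epsilon}$ up to a $\pi$-null set. Using the marginal equations,
\be
\mu(U) = \pi(U\times \bar Y) = \pi(U\times U^{r+\epsilon}) \leq \pi(\bar Y \times U^{r+\epsilon}) = \nu(U^{r+\epsilon}),
\ee
and symmetrically $\nu(U)\leq \mu(U^{r+\epsilon})$; letting $\epsilon\downarrow 0$ and then $r\downarrow\OT_\infty$ yields $\rho\leq\OT_\infty(\mu,\nu)$.

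The main obstacle is the converse $\OT_\infty(\mu,\nu)\leq\rho$: one must build, from the tubular hypotheses alone, an admissible transport plan whose essential support lies in $\{d\leq r\}$ for every $r>\rho$. My plan is to invoke a Strassen-type theorem for measures, noting that the condition $\mu(U)\leq \nu(U^r)$ for all open $U$ is precisely the measure-theoretic Hall marriage condition, which suffices to produce such a coupling in the classical Wasserstein case \cite{Givens_1984}. In the optimal partial transport setting one adapts this by treating $\del\XX$ as a reservoir of infinite mass: mass of $\mu$ near $\del\XX$ that is not matched by $\nu$ in any interior $U^r$ can be sent to the boundary at a cost at most $r$, so Strassen's theorem applied after subtracting off the boundary-absorbed portions delivers an admissible plan. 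A final weak-limit argument, using once more the sequential compactness of $\Gamma(\mu,\nu)$ and the closedness of $\{\pi : \|d\|_{L^\infty(\pi)}\leq r\}$ under weak convergence, extends the bound from $r>\rho$ to $r=\rho$ itself.
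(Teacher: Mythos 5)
You should first be aware that the paper itself contains no proof of this theorem: it is imported verbatim from Givens--Shortt \cite{Givens_1984}, and the only in-paper argument attached to it is Remark \ref{rmk:limitptoinfty}, which treats the $p\to\infty$ limit in the partial-transport column of Table \ref{table:OTvsOPT}. Your part (i) is essentially a rewriting of that remark (fixed-plan $L^p\to L^\infty$ comparison for the upper bound, sequential compactness of $\Gamma(\mu,\nu)$ plus interpolation and lower semicontinuity for the lower bound), and it is fine modulo routine mass bookkeeping, which is harmless on $\Mel_\infty(Y)$ since compactly supported Radon measures on $\XX$ have finite mass. Your part (ii), read in the classical Wasserstein column, is also the standard route: the support-plus-marginals computation for the easy direction and a Strassen/Hall-type theorem for the converse, followed by a compactness argument to pass from $r>\rho$ to $r=\rho$; citing Strassen there plays the same role as the paper's own citation, so I have no objection to that half.

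The genuine gap is the partial-transport adaptation. The sentence ``mass of $\mu$ near $\del\XX$ that is not matched by $\nu$ in any interior $U^r$ can be sent to the boundary \ldots\ after subtracting off the boundary-absorbed portions'' is not an argument: deciding which part of $\mu$ is to be absorbed by $\del\XX$ is precisely the coupling you are trying to build, and Strassen's theorem cannot be invoked until the two measures have been balanced in a cost-compatible way, which is the entire difficulty. Worse, the neighborhood characterization as literally stated is false in the partial-transport setting: take $\mu=\delta_z$ for a single $z\in\XX$ with $d(z,\del\XX)=s>0$ and $\nu=0$. Then $\OT_\infty(\mu,\nu)=s$ (the point is transported to the boundary), while for every $r>0$ and every small ball $U\ni z$ one has $\mu(U)=1>0=\nu(U^r)$, so the right-hand infimum is $+\infty$. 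Relatedly, your easy-direction identity $\pi(\bar Y\times U^{r+\epsilon})=\nu(U^{r+\epsilon})$ uses the exact-marginal property, which in optimal partial transport holds only for Borel sets contained in $\XX$ and therefore breaks as soon as $U^{r+\epsilon}$ meets $\del\XX$. So your proof is sound (in outline) for the classical column, where the paper's citation lives, but for the partial-transport column the statement must first be amended --- e.g.\ by restricting the condition to sets $U$ with $U^r\cap\del\XX=\emptyset$, or by augmenting $\nu$ (and $\mu$) with an infinite reservoir on $\del\XX$ --- before any proof, yours or Strassen-based, can go through.
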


\begin{remark}
The topology of $\OT_{\infty}$ is finer than that of weak convergence.
\end{remark}

\begin{proposition}
\label{prop:WassersteinHolder}
Let $f : (Y,\delta) \to (Y',\delta')$ be an $\al$-H\"older map with H\"older constant $\Lambda$ and let $\mu, \nu \in \mathcal{P}(Y)$ then 
\be
W_{p,\delta'}(f_{\sharp}\mu,f_\sharp \nu) \leq \Lambda\, W_{p\al, \delta}^\al(\mu,\nu) \,.
\ee
\end{proposition}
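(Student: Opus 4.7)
The plan is to exploit the natural pushforward map on transport plans induced by $f \times f$. Given any $\pi \in \Gamma(\mu, \nu)$, I will first observe that $(f \times f)_\sharp \pi$ defines an element of $\Gamma(f_\sharp \mu, f_\sharp \nu)$: indeed, its marginals are obtained by pushing forward the marginals of $\pi$ through $f$, which yields $f_\sharp \mu$ and $f_\sharp \nu$ respectively. This is the one structural ingredient needed; the rest of the proof is essentially a chain of inequalities.

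Next I would estimate the cost of the plan $(f \times f)_\sharp \pi$. By the change-of-variables formula for pushforward measures,
\[
\int_{Y' \times Y'} \delta'(u,v)^p \, d\bigl[(f \times f)_\sharp \pi\bigr](u,v) \;=\; \int_{Y \times Y} \delta'(f(x), f(y))^p \, d\pi(x,y).
\]
Applying the $\alpha$-H\"older hypothesis pointwise under the integral sign gives
\[
\int_{Y \times Y} \delta'(f(x), f(y))^p \, d\pi(x,y) \;\leq\; \Lambda^p \int_{Y \times Y} \delta(x,y)^{p\alpha} \, d\pi(x,y).
\]
Since $(f \times f)_\sharp \pi$ is admissible, this yields $W_{p,\delta'}(f_\sharp \mu, f_\sharp \nu)^p \leq \Lambda^p \int_{Y^2} \delta(x,y)^{p\alpha} \, d\pi(x,y)$.

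To conclude, I would take the infimum over $\pi \in \Gamma(\mu,\nu)$ on the right-hand side. This infimum is precisely $W_{p\alpha, \delta}(\mu,\nu)^{p\alpha}$ by the very definition of the Wasserstein distance of order $p\alpha$, so
\[
W_{p,\delta'}(f_\sharp \mu, f_\sharp \nu)^p \;\leq\; \Lambda^p \, W_{p\alpha, \delta}(\mu,\nu)^{p\alpha},
\]
and taking $p$-th roots produces the stated inequality. There is no real obstacle to surmount: the argument is a direct consequence of the functoriality of pushforwards together with a pointwise application of the H\"older bound inside the integral, and the relation between the two orders of Wasserstein distance arises naturally from the exponent $p\alpha$ produced on the right. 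The only mild subtlety is bookkeeping the exponents ($p$ on the left, $p\alpha$ on the right, with a final factor $\alpha$ on the Wasserstein distance after extracting the $p$-th root), which matches exactly the statement.
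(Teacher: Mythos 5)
Your argument is correct and is exactly the standard pushforward-of-plans argument that the paper's one-line proof ("an immediate consequence of the H\"older continuity of $f$") leaves implicit: push a plan $\pi\in\Gamma(\mu,\nu)$ through $f\times f$, bound its cost pointwise by the H\"older estimate, and take the infimum, with the exponent bookkeeping matching the stated bound. Nothing is missing; you have simply written out the details the paper omits.
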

\begin{proof}
The inequality is an immediate consequence of the H\"older continuity of $f$.
\end{proof}

\subsubsection{Persistence measures}
Coming back to persistence theory, recall that it is possible to see persistence diagrams as measures on 
\be
\XX := \{(x,y) \in \R^2 \, \vert \, y>x\} \,.
\ee 
Henceforth, $\XX$ will always refer to this half space. Seen as measures, persistence diagrams are nothing other than a sum of Dirac measures. Closing this space with respect to the topology of vague convergence, we retrieve the set of Radon measures on $\XX$. 
\begin{definition}
The set of \textbf{persistence measures} $\DD$ is the set of Radon measures (of potentially infinite mass) on $\XX:= \{(x,y) \in \R^2 \, \vert \, y>x\}$.
\end{definition}
Equipping $\XX$ with the $\ell^\infty$-distance on $\R^2$ defined by 
\be
d((p,q),(r,s)) = \max\{\abs{p-r}, \abs{q-s}\} \,,
\ee
optimal partial transport distances $d_p$ between persistence measures become definable. The repercussions of this have been explored by Divol and Lacombe in \cite{Divol_2019}. 

The extension from the space of persistence diagrams to the space of persistence measures has three main advantages. First, that, as shown in \cite{Divol_2019}, it is possible to use the machinery of optimal transport to address problems in persistence theory. Second, that $\DD$ is a linear space, which renders taking means and combinations of diagrams possible and easy. Finally, that it is well-adapted to the stochastic setting, because of the linearity property and Tonnelli's theorem: two key properties which we will exploit repeatedly.
\begin{remark}
The notion of average as defined in the linear space of persistence measures in general exits the space of persistence diagrams. This can for instance be seen by considering a sequence of measures which vaguely tend to a measure which is absolutely continuous with respect to the Lebesgue measure on $\XX$. In this case, it is impossible to reconstruct a function whose diagram agrees with the desired measure. This is obvious in the 1D case where it is impossible to construct any tree from such a persistence measure, and so by extension, to construct any function.
Nonetheless, this notion of average has the advantage of encoding the averages of all linear functionals of the diagrams (one can in fact see this as a definition of this notion of average by adopting a dual point of view). 
Some authors have considered alternative notions of central tendencies adapted to metric spaces (and in particular the space of diagrams), such as Fréchet means defined on the spaces of diagrams (\cf for instance the work of Turner \textit{et al.} \cite{Turner_2014}). While this notion stays in the space where persistent diagrams are defined, it depends on the distance chosen on $\DD$ and moreover also on the exponent chosen for the cost function in the definition of Fréchet means. 
\end{remark}

\subsection{Stability of Wasserstein $p$-distances on diagrams}
With respect to optimal transport distances, we have some ``stability theorems'' the most classical of which is
\begin{theorem}[Bottleneck stability with respect to $\Linfty$, Corollary 3.6 \cite{Oudot:Persistence}]
Let $f,g: X \to \R$ be two continuous functions, then 
\be
d_\infty(\Dgm(f),\Dgm(g)) \leq \norm{f-g}_{\infty}
\ee
where $\Dgm(f)$ and $\Dgm(g)$ denote the diagrams of $f$ and $g$ respectively.
\end{theorem}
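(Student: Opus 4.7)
The plan is to reduce the claim to the algebraic stability theorem for q-tame persistence modules via the usual interleaving argument. Write $\delta := \norm{f-g}_{\Linfty}$.

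First, I would translate the $\Linfty$-bound on functions into a nested family of inclusions of superlevel sets. For every $r \in \R$, the pointwise inequality $\abs{f(x)-g(x)} \leq \delta$ gives $\{f \geq r\} \subset \{g \geq r-\delta\} \subset \{f \geq r-2\delta\}$, i.e.
\be
X_r^f \subset X_{r-\delta}^g \subset X_{r-2\delta}^f \,,
\ee
together with the symmetric chain obtained by swapping $f$ and $g$. These commuting inclusions are the geometric incarnation of a $\delta$-interleaving of the two superlevel filtrations.

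Second, I would apply \v{C}ech homology with coefficients in a field $k$ to this diagram. The inclusions induce morphisms of persistence modules
\be
\phi_r : H_*(X_r^f) \to H_*(X_{r-\delta}^g) \eqand \psi_r : H_*(X_r^g) \to H_*(X_{r-\delta}^f)\,,
\ee
and the fact that the inclusions compose to the canonical inclusions $X_r^f \hookrightarrow X_{r-2\delta}^f$ and $X_r^g \hookrightarrow X_{r-2\delta}^g$ guarantees that $\psi_{r-\delta} \circ \phi_r$ and $\phi_{r-\delta} \circ \psi_r$ coincide with the internal structure maps of the respective persistence modules shifted by $2\delta$. This is precisely the definition of a $\delta$-interleaving between $H_*(X,f)$ and $H_*(X,g)$.

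Third, I would invoke the algebraic stability theorem (Chazal--Cohen-Steiner--Glisse--Guibas--Oudot, later refined by Bauer--Lesnick): any two $\delta$-interleaved q-tame persistence modules admit a $\delta$-matching of their persistence diagrams, hence $d_\infty(\Dgm(f),\Dgm(g)) \leq \delta$. The only real obstacle is verifying the q-tameness hypothesis in the present level of generality. Since $X$ is compact and we work with \v{C}ech homology over a field, each inclusion $X_r^f \hookrightarrow X_s^f$ ($s < r$) factors through a neighborhood and the induced map on \v{C}ech homology has finite rank; this is exactly q-tameness. With this in place, algebraic stability applies verbatim and yields the bound.
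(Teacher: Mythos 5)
Your overall route --- converting $\norm{f-g}_\infty \leq \delta$ into the inclusions $X_r^f \subset X_{r-\delta}^g \subset X_{r-2\delta}^f$, reading these as a $\delta$-interleaving of the superlevel persistence modules, and invoking algebraic stability --- is exactly the argument behind the result as it is proved in the reference the paper cites (the paper itself gives no proof, it quotes Corollary 3.6 of Oudot's book), and your first two steps are correct as written. The genuine gap is in the q-tameness verification, which you yourself flagged as the only real obstacle. It is not true that compactness, \v{C}ech homology over a field, and ``factoring through a neighborhood'' force the maps $H_*(X_r^f)\to H_*(X_s^f)$, $s<r$, to have finite rank: the neighborhood you factor through may be all of $X$, and then nothing is gained. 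Concretely, let $X$ be the Hawaiian earring (compact, connected, locally path-connected, hence allowed by the standing hypotheses of the paper) and $f\equiv 0$. For $s<r\leq 0$ the structure map in degree $1$ is the identity of $\check{H}_1(X)$, which is an infinite-dimensional vector space over any field, so the module is not q-tame --- and in this generality even the diagram $\Dgm(f)$ is not defined, so the step ``algebraic stability applies verbatim'' fails exactly where you placed the burden of the proof.

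What saves the statement in the situations where the paper actually uses it is additional structure. In degree $0$, compactness plus local path-connectedness does give finite rank: if infinitely many components of $X_s^f$ met $X_r^f$ for some $s<r$, an accumulation point $x$ of witnesses would satisfy $f(x)\geq r>s$, and a path-connected neighborhood of $x$ contained in $\{f>s\}$ would merge infinitely many of these components, a contradiction; this (or, equivalently, the finiteness of the $\veps$-trimmed tree used in the proof of theorem \ref{thm:TfcalulatesH0}) is what makes the $H_0$ diagram well-defined and q-tame. In higher degrees one must assume more of $X$ --- triangulability, or the geodesic/doubling and convex-ball hypotheses the paper imposes elsewhere --- and the correct mechanism is to factor the structure maps through an object with finite-dimensional homology, for instance the nerve of a finite cover by geodesically convex balls as in the proof of theorem \ref{thm:genericityTotalHomology}, rather than through a mere open neighborhood. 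With such a hypothesis (or by quoting the known q-tameness theorems for such spaces), your third step does go through and yields the stated bound.
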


\begin{theorem}[Wasserstein $p$ stability]
\label{thm:WassersteinpStabilityLLC}
Let $X$ be a compact LLC metric space of $\updim(X) = d$ and consider $f, g \in C^\al_\Lambda(X,\R)$. Then, for all $p > q>\frac{d}{\al}$,  
\be
d_p^p(H_0(X,f),H_0(X,g)) \leq C_{X,\Lambda,\al} \norm{f-g}_\infty^{p-q} \,.
\ee
If $X$ is further assumed to be geodesic and is such that small enough balls of $X$ are geodesically convex, then for every $k \in \N^*$, and all $p > q >\frac{d(k+1)}{\al}$ 
\be
d_p^p(H_k(X,f),H_k(X,g)) \leq C_{X,\Lambda,\al,k} \norm{f-g}_\infty^{p-q} \,.
\ee
Finally, if $X$ is further supposed to be doubling, then the inequality above holds for all $p>q>\frac{d}{\al}$.
\end{theorem}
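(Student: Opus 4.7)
The plan follows the classical Cohen-Steiner--Edelsbrunner--Harer interpolation scheme, combining bottleneck stability with a uniform bound on a total $q$-persistence, which is made available by the results of the previous section. The three stated thresholds for $q$ correspond to the three regimes in which Step 2 below delivers such a uniform bound.

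\textbf{Step 1 (bottleneck input).} First, I invoke the bottleneck stability theorem, which gives $d_\infty(H_k(X,f), H_k(X,g)) \leq \norm{f-g}_\infty$ for any continuous $f,g$.

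\textbf{Step 2 (uniform $q$-total persistence on $C^\al_\Lambda$).} The key technical input is a uniform bound of the form $\Pers_q^q(H_k(X,h)) \leq K_{X,\Lambda,\al,k,q}$ for all $h \in C^\al_\Lambda(X,\R)$ as soon as $q$ strictly exceeds the appropriate threshold. For $k=0$ and $X$ merely LLC, Lemma \ref{lemma:fCalpiCal} gives that $\pi_h : X \to T_h$ is $\al$-H\"older with constant depending only on $\Lambda$ and $X$, and chaining with $N^\veps \leq \mathcal{N}_{T_h}(\veps/2) \leq \mathcal{N}_X((\veps/(2K))^{1/\al})$ as in the proof of Lemma \ref{lemma:PerspvepsContinuity} yields $N^\veps \leq C\,\veps^{-d/\al-\delta}$ uniformly in $h$. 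For $k\geq 1$ on a geodesic $X$ with small geodesically convex balls, the nerve argument of Theorem \ref{thm:genericityTotalHomology} bounds $N_k^\veps$ by the cardinality of the $k$-skeleton of the nerve of a covering of $X$ by balls of radius $\sim (\veps/\Lambda)^{1/\al}$: without the doubling hypothesis this yields $N_k^\veps \lesssim \mathcal{N}_X^{k+1}$ (Remark \ref{rmk:NonDoubling}), and under doubling it yields $N_k^\veps \lesssim \mathcal{N}_X$. Feeding either bound into the Mellin identity
\be
\Pers_q^q(H_k(X,h)) = q\int_0^\infty \veps^{q-1} N_k^\veps\, d\veps
\ee
produces a uniform constant, finite exactly when $q > d/\al$ in the $H_0$ or doubling case and when $q > d(k+1)/\al$ in the general geodesic case.

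\textbf{Step 3 (interpolation).} Let $\sigma$ be a matching between $H_k(X,f)$ and $H_k(X,g)$ realizing $d_\infty$, with $D_i = \norm{p_i - \sigma(p_i)}_\infty \leq d_\infty$ for each index. Since $d_p^p \leq \sum_i D_i^p$ for any admissible matching, one gets
\be
d_p^p(H_k(X,f), H_k(X,g)) \leq d_\infty^{p-q}\sum_i D_i^q .
\ee
Splitting indices into those matched to the diagonal (where $D_i$ equals half the corresponding bar length) and those matched off-diagonal (where the triangle inequality through the diagonal gives $D_i \leq \tfrac{1}{2}(\ell(b_f)+\ell(b_g))$), a standard convexity estimate yields $\sum_i D_i^q \leq 2^{q-1}\bigl(\Pers_q^q(H_k(X,f)) + \Pers_q^q(H_k(X,g))\bigr)$, and Step 2 bounds this by a constant depending only on $X,\Lambda,\al,k,q$. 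Combining with Step 1 yields the claimed inequality, with the $q$-dependence absorbed into $C_{X,\Lambda,\al,k}$ since $q$ may be chosen any value strictly above the threshold.

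\textbf{Expected difficulty.} The interpolation machinery of Step 3 and the Mellin bookkeeping of Step 2 are standard; the real work lies in extracting the three different uniform bounds on $N_k^\veps$ that distinguish the three regimes of the statement. Specifically, the threshold $d(k+1)/\al$ versus $d/\al$ reflects exactly the polynomial-versus-linear dependence of the $k$-skeleton cardinality of a minimal nerve on the covering number $\mathcal{N}_X$, and the $k=0$ LLC case works without any convexity hypothesis because the tree $T_h$ sidesteps the nerve entirely via Lemma \ref{lemma:fCalpiCal}.
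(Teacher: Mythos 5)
Your proposal is correct and follows essentially the same route as the paper's own proof: a bottleneck matching combined with the interpolation $d_p^p \leq \norm{f-g}_\infty^{p-q}\sum D_i^q$, with the uniform bound on $\Pers_q^q$ over $C^\al_\Lambda(X,\R)$ extracted from the covering-number estimate $N^\veps \leq \mathcal{N}_X((\veps/2K)^{1/\al})$ of lemmas \ref{lemma:fCalpiCal} and \ref{lemma:PerspvepsContinuity}, and from the nerve argument of theorem \ref{thm:genericityTotalHomology} together with remark \ref{rmk:NonDoubling} for the higher-degree and doubling regimes. The Mellin-type identity and the three thresholds for $q$ are handled exactly as in the paper, so there is nothing substantive to add.
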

\begin{proof}
The first part of the proof is essentially as in \cite{LipschitzStableLpPers}. Start by picking the bottleneck matching between the diagrams of $f$ and $g$ and denote it by $\gamma : \Dgm(f) \to \Dgm(g)$. Then for any $p>q>\frac{d}{\al}$,
\begin{align*}
d_p^p(\Dgm(f),\Dgm(g)) &\leq \sum_{b \in \Dgm(f)} d_{\mathcal{X},\infty}(b, \gamma(b))^p \\
&\leq \norm{f-g}_\infty^{p-q} \sum_{b \in \Dgm(f)} d_{\mathcal{X},\infty}(b, \gamma(b))^{q} \\
&\leq 2^{q}\norm{f-g}_\infty^{p-q} \sum_{b \in \Dgm(f)} d_{\mathcal{X},\infty}(b, \Delta)^{q} + d_{\mathcal{X},\infty}(\gamma(b), \Delta)^{q} \\
&= 2^{q}\norm{f-g}_\infty^{p-q} (\Pers_q^q(f) + \Pers_q^q(g))
\end{align*}
But both $\Pers_q^q(f)$ and $\Pers_q^q(g)$ are bounded above by a global constant for the class $C^\al_\Lambda(X,\R)$, since by the proof of lemma \ref{lemma:PerspvepsContinuity}
\be
N^\veps_f  \leq \mathcal{N}_X\left(\left(\frac{\veps}{2C\Lambda}\right)^{1/\al}\right) \,,
\ee
where $C$ is a constant stemming from the quantitative LLC condition on $X$. This inequality entails that
\begin{align*}
\Pers_q^q(f) = q \int_0^\infty \veps^{q-1} N^\veps_f \; d\veps &\leq q \int_0^{\Lambda \,\text{diam}(X)^\al} \veps^{q-1} \mathcal{N}_X\left(\left(\frac{\veps}{2C\Lambda}\right)^{1/\al}\right) \; d\veps \\
&= (2C\Lambda)^{q} \al q\int_0^{\frac{\text{diam}(X)}{(2C)^{1/\al}}} \veps^{q\al-1} \mathcal{N}_X(\veps) \; d\veps \,,
\end{align*}
which is finite as soon as $q>\frac{d}{\al}$ since $\mathcal{N}_X(\veps) = O(\veps^{-d-\delta})$ as $\veps \to 0$ for all $\delta >0$, by definition of the upper-box dimension. The constant in the statement of the theorem is bounded above by the above estimate. The statements for with the supplementary assumptions of the theorem, the proof follows from the same reasononing by using the proof of theorem \ref{thm:genericityTotalHomology} and remark \ref{rmk:NonDoubling}
\end{proof}
\begin{remark}
More generally, the proof of the theorem adapts with ease to accomodate any compact set of $C^0(X,\R)$ admitting a global modulus of continuity dominated by a H\"older modulus of continuity. It is worth mentioning that such a theorem is impossible to prove for any regularity strictly worse than H\"older, as in such a class of regularity, there are functions $f$ of infinite persistence index, so the theorem is vacuous. 
\end{remark}
Wasserstein stability results are common in the literature and are typically stated by making the following assumption on the underlying metric space $X$. 
\begin{definition}{\cite{LipschitzStableLpPers}}
A metric space \textbf{$X$ implies bounded $q$-total persistence} if, for all $k \in \N$, there exists a constant $C_X$ that depends only on $X$ such that
\be
\Pers_q^q(H_k(X,f)) < C_X 
\ee
for every tame function $f$ with Lipschitz constant $\Lip(f) \leq 1$.
\end{definition}
The regime of validity of Wasserstein stability thus depends solely on this condition on $X$. We can thus see theorem \ref{thm:WassersteinpStabilityLLC} as a theorem giving explicit bounds on the $q$ such that $X$ implies bounded $q$-total persistence (in fact, it does so for every degree in homology independently). Following \cite{LipschitzStableLpPers}, it follows clearly from the proof of Wasserstein stability that this definition implies bounded persistence stability for Lipschitz functions. 
\begin{corollary}
\label{cor:BoundOnCX}
Let $X$ be a compact LLC metric space of $\updim(X) = d$. Then, for all $f\in \Lip_1(X)$ and $p > q>\frac{d}{\al}$,   
\be
\Pers_q^q(H_0(X,f)) \leq (2C\Lambda)^{q} q\int_0^{\frac{\text{diam}(X)}{2C}} \veps^{q-1} \mathcal{N}_X(\veps) \; d\veps  \,.
\ee
If $X$ is further assumed to be geodesic and is such that small enough balls of $X$ are geodesically convex, then for every $k \in \N^*$, and all $p > q >\frac{d(k+1)}{\al}$ 
\be
\Pers_q^q(H_k(X,f))  \leq (2C\Lambda)^{q} q\int_0^{\frac{\text{diam}(X)}{2C}} \veps^{q-1} (\mathcal{N}_X(\veps)\vee K_X)^k \; d\veps \,,
\ee
where $K_X = \mathcal{N}_X(\veps^*)$ and $\veps^*$ is the value after which balls of $X$ are no longer geodesically convex. Finally, if $X$ is further supposed to be $M$-doubling, then for all $p>q>\frac{d}{\al}$,
\be
\Pers_q^q(H_k(X,f))  \leq (2C\Lambda)^{q} q(M^{k+1}-M^k)\int_0^{\frac{\text{diam}(X)}{2C}} \veps^{q-1} (\mathcal{N}_X(\veps)\vee K_X) \; d\veps \,.
\ee
\end{corollary}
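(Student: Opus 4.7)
The corollary makes explicit the quantitative estimates already embedded in the proofs of Theorem \ref{thm:WassersteinpStabilityLLC} and Theorem \ref{thm:genericityTotalHomology}, specialized to Lipschitz functions. The unifying identity I would use throughout is the layer-cake representation
\[
\Pers_q^q(H_k(X,f)) = q \int_0^\infty \veps^{q-1} N^\veps_{k,f} \; d\veps \,,
\]
where $N^\veps_{k,f}$ counts the bars of length at least $\veps$ in the $k$-th degree barcode of $f$. Since $f \in \Lip_1(X)$ forces every bar to have length at most $\mathrm{diam}(X)$, the integrand vanishes beyond a threshold determined by $\mathrm{diam}(X)$ and the LLC constant, so the task reduces to bounding $N^\veps_{k,f}$ uniformly in $f$ by a function of $\mathcal{N}_X$.

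For the $H_0$ inequality, the plan is to specialize the argument inside lemma \ref{lemma:PerspvepsContinuity} to $\al = \Lambda = 1$. Lemma \ref{lemma:fCalpiCal} gives that $\pi_f : X \to T_f$ is Lipschitz with constant $2C$, where $C$ is the LLC constant of $X$. The estimate $N^\veps_f \leq \mathcal{N}_{T_f}(\veps/2)$ (each leaf of $T_f^\veps$ forces one extra ball of radius $\veps/2$, as in the proof of Theorem \ref{thm:Lfandupboxdim}) then pulls back to $N^\veps_f \leq \mathcal{N}_X(\veps/(2C))$. Substituting this into the layer-cake integral and performing the change of variable $\veps \mapsto 2C\veps$ produces exactly the first bound.

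For the higher-degree bounds under the geodesic convexity hypothesis, I would reuse the nerve-lemma argument of Theorem \ref{thm:genericityTotalHomology}. For $\veps \leq \veps^*$, a minimal cover of $X$ by balls of radius of order $\veps$ has contractible non-empty intersections, so lemmas \ref{lemma:NerveLemma} and \ref{lemma:SmallCvxBalls} give that $N^\veps_{k,f}$ is bounded by the number of $(k+1)$-cliques in the nerve of such a cover, which is at most $\mathcal{N}_X(\veps)^{k+1}$ in general (remark \ref{rmk:NonDoubling}) and at most $(M^{k+1}-M^k)\mathcal{N}_X(\veps)$ when $X$ is $M$-doubling. For $\veps > \veps^*$ we have $\mathcal{N}_X(\veps) \leq K_X$, so replacing $\mathcal{N}_X(\veps)$ by $\mathcal{N}_X(\veps) \vee K_X$ extends the bound uniformly over all scales. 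Feeding these uniform estimates into the layer-cake formula and applying the same change of variable yields the last two inequalities.

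The main technical subtlety is handling the cutoff at the convexity radius $\veps^*$: for $\veps > \veps^*$ the nerve lemma ceases to apply directly, and one must verify that the number of bars of length $\geq \veps$ is nonetheless controlled by a constant depending only on $K_X$ and $k$, which is precisely what the $\mathcal{N}_X(\veps) \vee K_X$ formulation encodes. Once this uniform clique bound is in place, the rest of the argument is a direct substitution plus a rescaling $\veps \mapsto 2C\veps$ in the layer-cake integral, and the three inequalities fall out in parallel.
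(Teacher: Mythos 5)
Your proposal is correct and follows essentially the same route as the paper: the corollary is simply read off from the proofs of theorem \ref{thm:WassersteinpStabilityLLC} and theorem \ref{thm:genericityTotalHomology} (via lemmas \ref{lemma:PerspvepsContinuity}, \ref{lemma:fCalpiCal}, the nerve argument, and remark \ref{rmk:NonDoubling}), using the layer-cake identity $\Pers_q^q = q\int_0^\infty \veps^{q-1}N^\veps_{k,f}\,d\veps$, the bound $N^\veps_f \leq \mathcal{N}_{T_f}(\veps/2) \leq \mathcal{N}_X(\veps/(2C\Lambda))$ specialized to $\al=\Lambda=1$, and the clique-count bound on the nerve with the $K_X$ cutoff at the convexity radius. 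The only caveat is cosmetic and inherited from the statement itself: a clean count of $k$-simplices of the nerve gives exponent $k+1$ (consistent with the range $q>\frac{d(k+1)}{\al}$ and with remark \ref{rmk:NonDoubling}), as in your write-up, rather than the exponent $k$ printed in the middle display.
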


Some other Wasserstein $p$ stability results have been reported in the literature: Chen and Edelsbrunner \cite{Chen_2011} studied functions on non-compact domains of $\R^d$, obtaining a stability result which holds for $p>d$. The condition $p>d$ also appears in stability results for \v{C}ech filtrations for point clouds in $\R^d$ and the case of Vietoris-Rips filtrations was recently addressed in \cite{Skraba_2020} by Skraba and Turner.

\subsection{Distance notion and stability for trees}
\begin{definition}
Let $X$ and $Y$ be two compact metric spaces, the \textbf{Gromov-Hausdorff distance, $d_{GH}(X,Y)$} between $X$ and $Y$, is defined as
\be
d_{GH}(X,Y) := \inf_{\substack{f:X \to Z \\ g: Y \to Z}} \max \left\{\sup_{x \in X} \inf_{y\in Y} d_Z(f(x),g(y)), \sup_{y\in Y} \inf_{x \in X} d_Z(f(x),g(y))\right\} \,.
\ee
where the infimum is taken over all metric spaces $Z$ and all isometric embeddings $f: X \to Z$ and $g: Y \to Z$. 
\end{definition}
The Gromov-Hausdorff distance quantifies how far away two metric spaces $X$ and $Y$ are from being isometric to each other. However, it is practically impossible to compute this distance with the above definition. To somewhat alleviate this, we will use the following characterization of the Gromov-Hausdorff distance:
\begin{proposition}[Burago \textit{et al.}, \S 7 \cite{Burago:Metric}]
The Gromov-Hausdorf distance is characterized by 
\be
d_{GH}(X,Y) = \half \inf_{\mathfrak{R}} \sup_{\substack{(x,y) \in \mathfrak{R} \\ (x',y') \in \mathfrak{R}}} \abs{d_X(x,x')-d_Y(y,y')} \,,
\ee
where the infimum is taken over all \textit{correspondences}, \textit{i.e.} subsets $\mathfrak{R} \subset X \times Y$ such that for every $x \in X$ there is at least one $y \in Y$ such that $(x,y) \in \mathfrak{R}$ and a symmetric condition for every $y \in Y$.
\end{proposition}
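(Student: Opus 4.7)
The plan is to prove the two inequalities $d_{GH}(X,Y) \leq \half \inf_\mathfrak{R} r(\mathfrak{R})$ and $\half \inf_\mathfrak{R} r(\mathfrak{R}) \leq d_{GH}(X,Y)$ separately, where I write $r(\mathfrak{R}) := \sup_{(x,y),(x',y') \in \mathfrak{R}} \abs{d_X(x,x') - d_Y(y,y')}$ for the distortion of a correspondence $\mathfrak{R}$.

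For the easy direction $\half \inf_\mathfrak{R} r(\mathfrak{R}) \leq d_{GH}(X,Y)$, I would start from near-optimal isometric embeddings. Fix $\delta > 0$, choose a metric space $Z$ with isometric embeddings $f: X \hookrightarrow Z$ and $g: Y \hookrightarrow Z$ whose images have Hausdorff distance at most $\eta := d_{GH}(X,Y) + \delta$, and set
\be
\mathfrak{R}_\eta := \{(x,y) \in X \times Y \;\vert\; d_Z(f(x), g(y)) \leq \eta\} \,.
\ee
The Hausdorff condition guarantees $\mathfrak{R}_\eta$ is a correspondence. For any $(x,y), (x',y') \in \mathfrak{R}_\eta$, the isometry property and the triangle inequality in $Z$ yield
\be
\abs{d_X(x,x') - d_Y(y,y')} = \abs{d_Z(f(x), f(x')) - d_Z(g(y), g(y'))} \leq d_Z(f(x),g(y)) + d_Z(f(x'),g(y')) \leq 2\eta \,,
\ee
so $r(\mathfrak{R}_\eta) \leq 2\eta$. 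Letting $\delta \to 0$ finishes this direction.

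For the converse $d_{GH}(X,Y) \leq \half \inf_\mathfrak{R} r(\mathfrak{R})$, I would take any correspondence $\mathfrak{R}$ of distortion $r(\mathfrak{R}) = 2\epsilon$ and explicitly construct an ambient space exhibiting Hausdorff distance at most $\epsilon$. Set $Z := X \sqcup Y$ and define a candidate metric $d_Z$ by letting $d_Z$ restrict to $d_X$ and $d_Y$ on the respective factors, and for mixed pairs $x \in X, y \in Y$,
\be
d_Z(x,y) := \inf_{(x', y') \in \mathfrak{R}} \left[ d_X(x, x') + \epsilon + d_Y(y', y) \right] \,.
\ee
By construction $X$ and $Y$ embed isometrically in $Z$, and picking any $(x_0, y_0) \in \mathfrak{R}$ in the infimum shows $d_Z(x_0, y_0) \leq \epsilon$, so since $\mathfrak{R}$ is a correspondence, the Hausdorff distance of $X$ and $Y$ inside $Z$ is at most $\epsilon$.

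The main obstacle is verifying that $d_Z$ genuinely satisfies the triangle inequality for mixed triples. For $x, x'' \in X$ and $y \in Y$, expanding the two infima defining $d_Z(x,y)$ and $d_Z(y, x'')$ reduces the inequality $d_X(x, x'') \leq d_Z(x,y) + d_Z(y, x'')$ to showing
\be
d_X(x, x'') \leq d_X(x, x') + d_X(x''', x'') + 2\epsilon + d_Y(y', y) + d_Y(y''', y)
\ee
for arbitrary witnesses $(x', y'), (x''', y''') \in \mathfrak{R}$. Applying the triangle inequality on $X$ reduces this to $d_X(x', x''') \leq 2\epsilon + d_Y(y', y) + d_Y(y''', y)$, which follows from the distortion bound $d_X(x', x''') \leq d_Y(y', y''') + 2\epsilon$ and the triangle inequality on $Y$. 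The symmetric mixed triple (two points in $Y$, one in $X$) is identical, and the unmixed triangle inequalities are immediate. Taking the infimum over all correspondences then gives $d_{GH}(X,Y) \leq \half \inf_\mathfrak{R} r(\mathfrak{R})$, completing the proof.
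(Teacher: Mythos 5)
The paper offers no proof of this proposition: it is quoted directly from Burago \emph{et al.} (\S 7.3 of \cite{Burago:Metric}), and your argument is precisely the standard one given there --- extracting a correspondence $\mathfrak{R}_\eta$ from a near-optimal pair of embeddings for one inequality, and gluing $X \sqcup Y$ along $\mathfrak{R}$ with the metric $d_Z(x,y) = \inf_{(x',y')\in\mathfrak{R}}[d_X(x,x') + \epsilon + d_Y(y',y)]$ for the other --- so your proposal is correct and matches the cited proof. Two loose ends are worth a sentence each: when forming $\mathfrak{R}_\eta$ you should take embeddings whose images have Hausdorff distance strictly less than $\eta$ (possible since $d_{GH}(X,Y) < \eta$), so that each $x$ genuinely admits a $y$ with $d_Z(f(x),g(y)) \leq \eta$; and besides the mixed triangle inequality $d_X(x,x'') \leq d_Z(x,y) + d_Z(y,x'')$ that you verify, one should also record the inequality $d_Z(x,y) \leq d_X(x,x'') + d_Z(x'',y)$ (and its $Y$-side analogue), which follows at once from the triangle inequality inside the infimum and needs no distortion bound.
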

\begin{remark}
Given two surjective maps $\pi_X : Z \to X$ and $\pi_Y : Z \to Y$, it is possible to build a correspondence between $X$ and $Y$ by considering the set $\{(\pi_X(z), \pi_Y(z)) \in X\times Y \, \vert \, z \in Z\}$.
\end{remark}

A natural question is to ask whether we have an equivalent statement about the stability of $d_{GH}$ with respect to $\norm{\cdot}_{\Linfty}$ and whether the two notions of distances are in some sense ``compatible''. We will positively answer this first question. In general $d_\infty$ and $d_{GH}$ are not compatible, in the sense that no inequality between the two holds in all generality (\cf remark \ref{rmk:incompatibilitydGHdb}). Le Gall and Duquesne \cite{LeGall:Trees} gave a first stability result of $d_{GH}$ with respect to the $L^\infty$-norm on continuous functions on $[0,1]$:
\begin{theorem}[$\Linfty$-stability of trees, \cite{LeGall:Trees}]
\label{thm:stabilityoftrees}
Let $f,g: [0,1] \to \R$ be two continuous functions. Then
\be
d_{GH}(T_f,T_g) \leq 2 \norm{f-g}_{\Linfty} \,.
\ee
\end{theorem}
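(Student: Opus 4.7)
The plan is to invoke the correspondence characterization of the Gromov-Hausdorff distance (the proposition stated just before the theorem) and build the obvious correspondence from the two canonical projections onto the trees. Explicitly, let $\pi_f : [0,1] \to T_f$ and $\pi_g : [0,1] \to T_g$ be the quotient maps, and set
\be
\mathfrak{R} := \{(\pi_f(t), \pi_g(t)) \, \vert \, t \in [0,1]\} \subset T_f \times T_g.
\ee
Since both $\pi_f$ and $\pi_g$ are surjective, $\mathfrak{R}$ is a valid correspondence, so it suffices to bound its distortion: for every $s,t \in [0,1]$, I need to show
\be
\abs{d_f(\pi_f(s),\pi_f(t)) - d_g(\pi_g(s),\pi_g(t))} \leq 4\norm{f-g}_{\Linfty}.
\ee

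The key simplification is that on the one-dimensional interval $[0,1]$ the pseudo-distance from Definition \ref{def:explicitpseudodistance} admits a closed form. Any continuous path $\gamma : [0,1] \to [0,1]$ joining $s$ to $t$ must, by the intermediate value theorem, pass through every point of $[s\wedge t, s \vee t]$, so $\inf_u f(\gamma(u)) \leq \min_{[s\wedge t, s\vee t]} f$; conversely, the direct affine path realizes this minimum. Hence
\be
\sup_{\gamma: s \mapsto t} \inf_{u \in [0,1]} f(\gamma(u)) = \min_{[s\wedge t, s\vee t]} f,
\ee
and therefore
\be
d_f(\pi_f(s),\pi_f(t)) = f(s) + f(t) - 2 \min_{[s\wedge t, s\vee t]} f,
\ee
with the analogous formula for $g$.

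Once both distances are written in this closed form, the bound on the distortion follows from three elementary $\Linfty$ comparisons: $|f(s)-g(s)| \leq \norm{f-g}_{\Linfty}$, $|f(t)-g(t)| \leq \norm{f-g}_{\Linfty}$, and $\bigl|\min_{[s\wedge t,s\vee t]} f - \min_{[s\wedge t,s\vee t]} g\bigr| \leq \norm{f-g}_{\Linfty}$ (the minimum is a 1-Lipschitz functional for the uniform norm). Summing with appropriate weights yields $4\norm{f-g}_{\Linfty}$, and dividing by two in the correspondence formula gives the claimed bound $2\norm{f-g}_{\Linfty}$.

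The only mildly delicate step is the reduction of the supremum over paths to a minimum over the interval $[s\wedge t, s\vee t]$, which is where the one-dimensional hypothesis $X=[0,1]$ is used; everything else is bookkeeping. This is also what prevents a direct generalization to arbitrary $X$: on higher-dimensional domains one does not have such a clean reduction, and a more substantial argument (for instance via Reeb-graph approximations or via the algebraic stability of merge trees) is required to control $d_{GH}(T_f, T_g)$ in terms of $\norm{f-g}_{\Linfty}$.
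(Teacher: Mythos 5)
Your proof is correct: the correspondence $\mathfrak{R}$ induced by the two projections, the resulting distortion bound $d_{GH}(T_f,T_g)\le \tfrac12\sup_{s,t}\abs{d_f-d_g}$, and the closed form $\sup_{\gamma:s\mapsto t}\inf_u f(\gamma(u)) = \min_{[s\wedge t,s\vee t]}f$ on the interval are all valid, and the three $\Linfty$ comparisons do sum to $4\norm{f-g}_{\Linfty}$, giving the stated factor $2$. The skeleton is the same one the paper uses when it proves the generalization (theorem \ref{thm:diagstreesqiso}); the only genuine difference is how the key term is handled. Where you reduce to the one-dimensional closed form via the intermediate value theorem, the paper bounds $\bigl|\sup_{\gamma:x\mapsto y}\inf_u f(\gamma(u)) - \sup_{\eta:x\mapsto y}\inf_u g(\eta(u))\bigr| \le \norm{f-g}_{\Linfty}$ directly: along any fixed path $\gamma$ one has $\inf f\circ\gamma \ge \inf g\circ\gamma - \norm{f-g}_{\Linfty}$, and taking suprema over paths on both sides gives the estimate. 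That one-line comparison uses nothing about the domain, so your closing remark is off the mark: the identical correspondence/distortion argument goes through verbatim on any compact, connected, locally path-connected $X$ --- this is exactly the paper's proof of theorem \ref{thm:diagstreesqiso} --- and no Reeb-graph approximation or merge-tree stability machinery is needed. Your 1D closed form is correct and a nice observation, but it is an unnecessary specialization; replacing it by the path-comparison estimate both shortens the proof and removes the apparent obstruction to generalizing beyond $[0,1]$.
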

This result for functions on $[0,1]$ generalizes to more general topological spaces.
\begin{theorem}[Stability theorem for trees]
\label{thm:diagstreesqiso}
Let $X$ be a compact, connected and locally path connected topological space and let $f$ and $g:X\to \R$ be two continuous functions, then
\be
d_{GH}(T_f,T_g) \leq 2 \norm{f-g}_{\Linfty} \,.
\ee
\end{theorem}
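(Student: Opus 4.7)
The plan is to construct an explicit correspondence between $T_f$ and $T_g$ using the canonical projections $\pi_f : X \to T_f$ and $\pi_g : X \to T_g$, then bound the distortion of this correspondence directly in terms of $\norm{f-g}_\infty$. Specifically, I would define
\be
\mathfrak{R} := \{(\pi_f(x), \pi_g(x)) \in T_f \times T_g \, \vert \, x \in X\} \,,
\ee
which is a valid correspondence since $\pi_f$ and $\pi_g$ are both surjective. By the characterization of $d_{GH}$ via correspondences, it then suffices to show that
\be
\sup_{x,y \in X} \abs{d_f(x,y) - d_g(x,y)} \leq 4 \norm{f-g}_{\Linfty} \,,
\ee
where I view $d_f$ and $d_g$ as pseudo-distances on $X$ (and the induced distances on the respective quotients coincide on pairs of the form $(\pi_f(x),\pi_f(y))$ and $(\pi_g(x),\pi_g(y))$).

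The second step is to estimate this supremum using the explicit formula from definition \ref{def:explicitpseudodistance}. Writing $[x\mapsto y]_f := \sup_{\gamma: x \mapsto y} \inf_{t \in [0,1]} f \circ \gamma(t)$ (and analogously for $g$), I would split
\be
d_f(x,y) - d_g(x,y) = \bigl[(f-g)(x) + (f-g)(y)\bigr] - 2\bigl([x\mapsto y]_f - [x\mapsto y]_g\bigr) \,.
\ee
The first bracket is bounded in absolute value by $2\norm{f-g}_\infty$ trivially. For the second bracket, for any fixed path $\gamma$ from $x$ to $y$ one has $\abs{\inf_t f\circ\gamma - \inf_t g\circ\gamma} \leq \norm{f-g}_\infty$, and taking the supremum over $\gamma$ on both sides preserves this uniform bound, yielding $\abs{[x\mapsto y]_f - [x\mapsto y]_g} \leq \norm{f-g}_\infty$. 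Combining these estimates gives the desired bound of $4\norm{f-g}_\infty$.

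The last step is then purely formal: since
\be
d_{GH}(T_f,T_g) \leq \half \sup_{(\tau,\sigma),(\tau',\sigma') \in \mathfrak{R}} \abs{d_{T_f}(\tau,\tau') - d_{T_g}(\sigma,\sigma')} = \half \sup_{x,y \in X} \abs{d_f(x,y) - d_g(x,y)} \,,
\ee
the $4\norm{f-g}_\infty$ bound implies $d_{GH}(T_f,T_g) \leq 2\norm{f-g}_\infty$, as desired. There is no real obstacle here: the proof is essentially a pointwise manipulation of the defining formula for $d_f$, and the key observation is merely that the sup-inf functional $[x \mapsto y]$ is itself $\Linfty$-stable in its argument function. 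The non-trivial part in the Le Gall--Duquesne setting for $X=[0,1]$ was identifying a suitable correspondence; working directly with the canonical projections onto the metric quotients bypasses this and handles the general compact, connected, locally path-connected case with no extra work.
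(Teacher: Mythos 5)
Your proposal is correct and follows essentially the same route as the paper: both use the correspondence $\{(\pi_f(x),\pi_g(x)) \mid x \in X\}$ together with the distortion characterization of $d_{GH}$, and both reduce the bound to the $\Linfty$-stability of the functional $[x \mapsto y] = \sup_{\gamma: x \mapsto y}\inf_t f\circ\gamma$ (the paper merely rewrites this quantity as the value $f(\tau)$ at the lowest point of the geodesic in $T_f$). No substantive difference.
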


\begin{proof}
We will use the distortion characterization of the Gromov-Hausdorff distance, which yields the following inequality
\be
d_{GH}(T_f,T_g) \leq \half \sup_{x,y \in X}\abs{d_f(x,y)-d_g(x,y)} \,.
\ee
Following the logic of the proof of lemma \ref{lemma:fCalpiCal}, the distance between $\pi_f(x)$ and $\pi_f(y)$ is of the form
\be
d_f(\pi_f(x), \tau) + d_f(\tau,\pi_f(y)) = f(x)-f(\tau) + f(y)-f(\tau) 
\ee 
where $\tau$ is the lowest point of the geodesic path in $T_f$ between $\pi_f(x)$ and $\pi_f(y)$. This geodesic path on $T_f$ admits preimages by $\pi_f$ which are paths connecting $x$ to $y$. These paths achieve the following supremum
\be
\sup_{\gamma: x \mapsto y} \inf_{t\in [0,1]} f\circ \gamma = f(\tau) \leq f(x) \wedge f(y)
\ee 
where $a \wedge b := \min\{a,b\}$ since by construction $\gamma$ must always stay above $f(\tau)$ and since for $r>f(\tau)$, $x$ and $y$ lie in different connected components of $X_r$. If $\nu$ is the analogous vertex to $\tau$ on $T_g$,
\begin{align}
d_{GH}(T_f,T_g) &\leq \half \sup_{x,y \in X}\abs{d_f(x,y)-d_g(x,y)} \nonum\\
&= \half \sup_{x,y \in X}\abs{f(x)-g(x) + f(y) - g(y) - 2 f(\tau) + 2 g(\nu)} \nonum\\
&\leq \norm{f-g}_{\Linfty} + \sup_{x,y \in X} \abs{\sup_{\gamma: x \mapsto y} \inf_{t\in [0,1]} f\circ \gamma - \sup_{\eta: x \mapsto y} \inf_{t\in [0,1]} g \circ \eta} \nonum \\
&\leq 2 \norm{f-g}_{\Linfty} \,,
\end{align}
as desired.
\end{proof}

\begin{remark}
\label{rmk:incompatibilitydGHdb}
One can be tempted to establish a general inequality between $d_{GH}$ and $d_\infty$ since both of these distances are bounded by the $\Linfty$-norm. However, this is not possible. 
\par
Indeed, there is a simple counter-example to $d_{GH} \geq d_\infty$. To illustrate this consider two barcodes over a field $k$, $k[s,-\infty[$ and $k[s+\veps,-\infty[$. The bottleneck distance between these two is clearly $\geq \veps$. But supposing that the functions $f$ and $g$ generating these barcodes are such that $f = g+\veps$ the trees $T_g$ and $T_f$ are isometric, so $d_{GH}(T_f,T_g) =0 < \veps \leq d_\infty(\bcode(f),\bcode(g))$.
\par
Conversely, there are also counter-examples to $d_\infty \geq d_{GH}$, as this inequality would imply that two trees which have the same barcode are isometric. This is clearly false, as one can ``glue'' the bars of a given barcode is many different ways to give a tree, which generically will not be isometric. 
\end{remark}

\section{Remarks about stochastic processes}
\label{sec:StochasticProcesses}

As we have previously seen, the study of diagrams of continuous functions involves understanding their regularity. Many stochastic processes are \textit{almost} H\"older continuous in the following sense. 
\begin{definition}
The class of \textbf{almost $\al$-H\"older continuous functions from $X$ to $\R$}, denoted $E^{\al}(X,\R)$ is the class of functions defined by
\be
E^\al(X,\R) := \bigcap_{0 \leq \beta < \al} C^\beta(X,\R)
\ee
\end{definition}
For example, Brownian motion and fractional Brownian motion are in a certain $E^\al$ for some value of $\al$ and moreover, as shown by Kahane \cite[Chapter 7]{Kahane}, random subgaussian Fourier series on torii of any dimension also tend to have $E^\al$ regularities. The ubiquity of $E^\al$-regularities in the context of stochastic processes partially motivate this definition.
\begin{notation}
In what will follow, we will denote $f_\sharp \PP$ the pushforward measure of $\PP$ by $f$.
\end{notation}

\subsection{A change in perspective}
\begin{remark}
Slightly abusing the notation, throughout this section, when we talk about a (continuous) stochastic process, we will talk about a measurable function $f: \Omega \to C^0(X,\R)$ (where $(\Omega,\mathcal{F},\PP)$ is some probability space).
\end{remark}
%Indicate which theorems remain valid for processes defined on LLC metric spaces

Random diagrams, or more precisely, probability measures on the space of diagrams (or on the space of persistence measures) have been studied under many different contexts in the persistence theory literature \cite{ChazalDivol:Brownian,Lacombe_2021,Turner_2014,Fasy_2014,Chazal_2014}. Since ultimately we are interested in studying random processes on some base space $X$, the space of probability measures on the space of diagrams is far too large, as not all diagrams stem from (continuous) functions. In all practical applications, we are never given an abstract persistence diagram. Rather, we compute the persistence diagram from a certain continuous function (on which we may postulate further regularity assumptions, typically that the function is inside some $E^\al(X,\R)$).
This motivates studying subspaces of the full space of persistent diagrams of the form $\cup_k \Dgm_k(E^\al(X,\R)) \subset \mathcal{D}$. This perspective turns out to have notable advantages. For instance, it is known that $(\mathcal{D},d_\infty)$ is not a separable space \cite[Theorem 5]{Bubenik_2018}, but adopting this point of view we can show the opposite.
\begin{proposition}
Let $K \subset (C^0(X,\R),\norm{\cdot}_{\Linfty})$, be a closed subset, then $(\overline{\Dgm(K)},d_\infty)$ is a Polish metric space.
\end{proposition}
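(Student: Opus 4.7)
The goal is to verify the two defining properties of a Polish space (separability and completeness) for $(\overline{\Dgm(K)}, d_\infty)$. Both are obtained by transporting structure from $(C^0(X,\R), \norm{\cdot}_{L^\infty})$ via the bottleneck stability theorem, which asserts that
\be
\Dgm : (C^0(X,\R), \norm{\cdot}_{L^\infty}) \longrightarrow (\mathcal{D}, d_\infty)
\ee
is $1$-Lipschitz, hence continuous.

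For separability, I would rely on the fact that since $X$ is a compact metric space, the Banach space $(C^0(X,\R), \norm{\cdot}_{L^\infty})$ is separable. (This is classical: take a countable dense set $\{x_n\}_n$ in $X$, form the countable $\Q$-algebra generated by the Lipschitz functions $d(\cdot, x_n)$, and apply Stone--Weierstrass.) A subset of a separable metric space is itself separable, so $K$ admits a countable dense sequence $\{f_n\}_n$. By continuity of $\Dgm$, the countable image $\{\Dgm(f_n)\}_n$ is dense in $\Dgm(K)$, and since density is preserved under closure it remains dense in $\overline{\Dgm(K)}$.

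For completeness, it suffices to exhibit $\overline{\Dgm(K)}$ as a closed subset of a complete ambient metric space. I would appeal to the standard fact that $(\mathcal{D}, d_\infty)$ is complete: given a $d_\infty$-Cauchy sequence $(\mu_n)_n$ of persistence measures, one first extracts a vague limit $\mu \in \mathcal{D}$ using the uniform tightness implied by the $d_\infty$-Cauchy condition, and then upgrades vague to $d_\infty$-convergence by a diagonal argument on the bottleneck matchings (the analogue of the standard argument giving completeness of Wasserstein spaces, adapted to the optimal partial transport setting of Divol--Lacombe). Since $\overline{\Dgm(K)}$ is closed in $(\mathcal{D}, d_\infty)$ by definition, any Cauchy sequence in it converges to a point of $\overline{\Dgm(K)}$, which gives the desired completeness.

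The expected main obstacle is the completeness step: while the separability reduction is a short continuity argument, verifying completeness of $(\mathcal{D}, d_\infty)$ requires the tightness-plus-diagonal-matching machinery referenced above. Once that standard ingredient is in place, the two properties combine immediately to yield that $(\overline{\Dgm(K)}, d_\infty)$ is Polish.
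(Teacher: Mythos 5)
Your separability argument is exactly the paper's proof: the paper notes that $\Dgm$ is continuous (bottleneck stability), that the continuous image of a separable metric space is separable, and that a countable dense subset of $\Dgm(K)$ stays dense after closure; your Stone--Weierstrass justification of separability of $C^0(X,\R)$ is a harmless addition. Where you genuinely diverge is the completeness half. The paper silently reads $\overline{\Dgm(K)}$ as the completion of $(\Dgm(K),d_\infty)$ (its proof speaks of ``the completion''), so completeness costs nothing and separability is the only thing to check. You instead read the closure as taken inside the ambient space $(\mathcal{D},d_\infty)$ and invoke completeness of that ambient space. That is a legitimate alternative route, and you are right to use only completeness and not Polishness of the ambient space (separability of $(\mathcal{D},d_\infty)$ fails, as the paper recalls from Bubenik), but be aware that completeness of $(\mathcal{D},d_\infty)$ is not an off-the-shelf citation: Divol--Lacombe establish completeness of $(\mathcal{D}_p,d_p)$ for $p<\infty$, and the $p=\infty$ case needs its own argument, essentially the one you sketch --- a $d_\infty$-Cauchy sequence has locally uniformly bounded mass on compact subsets of $\XX$ (via the open-set characterization of $d_\infty$ in the spirit of theorem \ref{thm:Winftydelta}, which the paper only states for compactly supported measures), hence a vague subsequential limit, and one then upgrades to $d_\infty$-convergence by lower semicontinuity of $d_\infty$ under vague convergence. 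So your plan is correct, but that lemma is the one piece of real content you would still have to write out; the paper's reading of $\overline{\Dgm(K)}$ avoids it entirely, at the price of leaving the ambient realization of the completion unspecified. The two readings agree precisely when the ambient space is complete, so your extra work also justifies identifying the abstract completion with the concrete closure in $(\mathcal{D},d_\infty)$.
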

\begin{proof}
We start by noticing that the map $\Dgm$ is continuous and that the continuous image of a separable metric space is separable \cite[Theorem 16.4a]{Willard_1970}. Moreover, $\overline{\Dgm(K)}$ remains separable, since the countable dense subset of $\Dgm(K)$ remains dense in the completion.   
\end{proof}
\begin{remark}
If the subset $K$ is compact, then $\Dgm(K) = \overline{\Dgm(K)}$. Notice also that the compact subsets of $C^0(X,\R)$ are sets having a uniform modulus of continuity, by virtue of Ascoli's theorem. In particular, spaces such as $C^\al_\Lambda(X,\R)$ are compact. 
\end{remark}
Consider now continuous $\R$-valued stochastic processes on $X$, $f$, defined on some probability space $(\Omega,\mathcal{F},\PP)$. Then, the space of probability measures on diagrams is also too large, as the probability measures we are concerned with must be of the form $(\Dgm_k \circ f)_\sharp \PP$. For convenience, we could take the closure of this space induced by measures of this form with respect to the topology of vague convergence, or with respect to some Wasserstein distance $W_{p,\delta}$ (on the space of probability measures on diagrams). This is a technical point, but allows us to avoid making hypotheses on the probability measures on the space of diagrams, which are in practice almost never verifiable, and instead give hypotheses on the stochastic processes from which the diagrams stem from.

This point of view is particularly well-suited to look at stochastic processes supported on compact subsets of $C^0(X,\R)$ (in fact, $E^\al(X,\R)$, for reasons which will become apparent later). An easy first result in this direction is that  
\begin{proposition}
Let $K$ be a compact subset of $C^0(X,\R)$, then $\Dgm_k(K) \subset \mathcal{D}_\infty$.
\end{proposition}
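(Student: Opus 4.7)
The plan is to invoke the Arzelà--Ascoli characterization of compactness in $C^0(X,\R)$ to extract a uniform $\Linfty$ bound on the functions in $K$, and then translate this bound into bounded support for the associated persistence diagrams.

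First, since $K$ is a compact subset of $C^0(X,\R)$ (equipped with the uniform norm, $X$ being compact), Arzelà--Ascoli yields in particular the existence of $M>0$ such that $\norm{f}_\infty \leq M$ for every $f \in K$. The accompanying equicontinuity of $K$ is not needed here; only the uniform boundedness is used.

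Next, for any $f \in K$, the superlevel filtration $X_r = \{f \geq r\}$ equals $X$ when $r \leq -M$ and is empty when $r > M$, so every bar of the $H_k$-barcode of $f$ has both endpoints inside $[-M,M]$. Since the $H_k$-persistence module of a continuous function on a compact space is q-tame, $\Dgm_k(f)$ is a bona fide Radon measure on $\XX$, and the preceding observation shows that it is supported in the bounded set
\[
\{(x,y) \in \XX \; \vert \; -M \leq x < y \leq M\},
\]
whose closure in $\R^2$ is compact.

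The only real conceptual subtlety, and the step I expect to require some care, is the interpretation of \emph{compact support} for a Radon measure on the open half-plane $\XX$. Bars may perfectly well accumulate at the diagonal $\del\XX$ (take for instance $f(x)=x\sin(1/x)$ on $[0,1]$), so the support inside $\XX$ itself is not forced to be closed, much less compact in $\XX$. The notion consistent with the limiting behaviour of the $\DD_p$ scale and with the usage of $d_\infty$ on persistence measures is boundedness of the support in $\R^2$ (equivalently, compactness of its closure in $\overline{\XX}$). Once this reading is fixed, the uniform bound above delivers exactly this property, and the membership $\Dgm_k(f) \in \DD_\infty$ for every $f \in K$ follows.
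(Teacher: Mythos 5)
Your proof is correct, and it is essentially the argument the paper has in mind: the paper states this proposition without proof (as ``an easy first result''), and the intended reasoning is exactly yours --- compactness of $K$ in the uniform norm gives a uniform bound $M$, so all finite bars of any $f \in K$ have endpoints in $[-M,M]$, and each $\Dgm_k(f)$, being a Radon measure on $\XX$ by q-tameness, has support contained in a bounded region. (Minor remark: Arzel\`a--Ascoli is not needed for this; any compact subset of a normed space is bounded.) Your discussion of how to read ``compact support'' is a genuine and correct point: since bars may accumulate at the diagonal, the support need not be compact as a subset of the open half-plane $\XX$, and the reading you adopt (bounded support, i.e.\ compact closure in $\overline{\XX}$) is the one consistent with the paper's later use of $\DD_\infty$, e.g.\ the proposition placing diagrams of a.s.\ $E^\al$ processes in $\mathcal{D}_{\frac{d}{\al}+\veps} \cap \mathcal{D}_\infty$, which would be false under the stricter interpretation.
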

This restriction to compact sets can be seen as a considerable limitation. For example, Brownian motion on the interval $[0,1]$ does not satisfy this hypothesis of compactness. However, by virtue of the tightness of probability measures on $C^0(X,\R)$, we may restrict ourselves to a compact $K_\veps$ of $C^0(X,\R)$ in which the process lies with probability $1-\veps$ and make probable statements there, or, alternatively, make conditional statements. 

Furthermore,
\begin{proposition}
\label{prop:EalDp}
Let $(\Omega,\mathcal{F},\PP)$ be a probability space and $f$ be a $\R$-valued, a.s. $E^\al$ stochastic process on a $d$-dimensional compact manifold $X$. Then, for all $\veps>0$, $(\Dgm_k\circ f)_\sharp\PP \in \mathcal{P}(\mathcal{D}_{\frac{d}{\al}+\veps} \cap \mathcal{D}_\infty)$ and \textit{a fortiori} in $\mathcal{P}(\mathcal{D}_r)$ for every $\frac{d}{\al}<r<\infty$. Furthermore, if $\frac{d}{\al}< q < \infty$ and for all $\beta <\al$, $\expect{\norm{f}_{C^\beta(X,\R)}^q} < \infty$, then, $\expect{\Dgm_k(f)} \in \bigcap_{\frac{d}{\al}<p\leq q}\mathcal{D}_p$.
\end{proposition}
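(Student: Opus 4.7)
The strategy is to apply the deterministic bounds of Theorem \ref{thm:genericityTotalHomology} and Corollary \ref{cor:BoundOnCX} pathwise and then take expectations using Tonelli's theorem. A compact Riemannian manifold of dimension $d$ satisfies all the relevant geometric hypotheses: it is geodesic and LLC, has a strictly positive convexity radius, is doubling (compactness bounds the Ricci curvature from below, so Bishop--Gromov applies), and has $\updim(X)=d$.

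For the first statement, fix $\veps>0$ and set $p=\frac{d}{\al}+\veps$. For almost every $\omega\in\Omega$, $f(\omega)\in E^\al(X,\R)=\bigcap_{\beta<\al} C^\beta(X,\R)$. Since $p>\frac{d}{\al}$, we can pick $\beta<\al$ close enough to $\al$ so that $p>\frac{d}{\beta}$. Applying Theorem \ref{thm:genericityTotalHomology} to $f(\omega)\in C^\beta(X,\R)$ yields $\Lag_k(f(\omega))\leq \frac{d}{\beta}<p$, so $\Pers_p^p(H_k(X,f(\omega)))<\infty$, i.e. $\Dgm_k(f(\omega))\in\mathcal{D}_p$ almost surely. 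Since $f(\omega)$ is continuous on the compact $X$, it is bounded, so the support of $\Dgm_k(f(\omega))$ is compact and $\Dgm_k(f(\omega))\in\mathcal{D}_\infty$ almost surely. The ``a fortiori'' claim follows from the inclusions $\mathcal{D}_p\cap\mathcal{D}_\infty\subset\mathcal{D}_r\cap\mathcal{D}_\infty$ for $r\geq p$.

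For the second statement, the dual definition of the expected diagram combined with Tonelli's theorem gives
\[
\int_\XX d(x,\partial\XX)^p\,d\expect{\Dgm_k(f)}(x)=\expect{\int_\XX d(x,\partial\XX)^p\,d\Dgm_k(f)(x)}=\expect{\Pers_p^p(H_k(X,f))}.
\]
Fix $p\in(\tfrac{d}{\al},q]$ and pick $\beta<\al$ with $p>\frac{d}{\beta}$. Corollary \ref{cor:BoundOnCX} in its doubling form provides a deterministic constant $K=K_{X,k,p,\beta}$ such that $\Pers_p^p(H_k(X,f(\omega)))\leq K\,\norm{f(\omega)}_{C^\beta(X,\R)}^p$ for almost every $\omega$. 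Taking expectations and applying Lyapunov's inequality (using $p\leq q$),
\[
\expect{\Pers_p^p(H_k(X,f))}\leq K\,\expect{\norm{f}_{C^\beta}^p}\leq K\left(\expect{\norm{f}_{C^\beta}^q}\right)^{p/q}<\infty
\]
by the moment hypothesis, so $\expect{\Dgm_k(f)}\in\mathcal{D}_p$.

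The most delicate step is the choice of $\beta<\al$, since Theorem \ref{thm:genericityTotalHomology} and Corollary \ref{cor:BoundOnCX} fix a H\"older exponent while $E^\al$-regularity only provides $C^\beta$ for $\beta$ strictly less than $\al$; continuity of $\beta\mapsto\frac{d}{\beta}$ at $\beta=\al$ is what permits pushing the threshold down to $\frac{d}{\al}$. A secondary technical point, implicit throughout, is the measurability of $\omega\mapsto\Dgm_k(f(\omega))(B)$ for Borel $B\subset\XX$, which is required both for $(\Dgm_k\circ f)_\sharp\PP$ to be well-defined and for Tonelli's theorem to apply to the expected diagram.
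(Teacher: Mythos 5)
Your proof is correct and takes essentially the same route as the paper: pathwise regularity plus Theorem \ref{thm:genericityTotalHomology} (choosing $\beta<\al$ close enough to $\al$) for the almost-sure statement, and the deterministic estimate $\Pers_p^p(H_k(X,f)) \leq K\norm{f}_{C^\beta}^p$ coming from the proof of Theorem \ref{thm:WassersteinpStabilityLLC}/Corollary \ref{cor:BoundOnCX}, combined with Tonelli and Jensen--Lyapunov, for the expected diagram. The only cosmetic difference is that you settle the \emph{a fortiori} inclusion by the elementary observation that a compactly supported measure in $\mathcal{D}_p$ lies in $\mathcal{D}_r$ for all $r\geq p$, rather than invoking the interpolation result the paper cites, which is perfectly fine.
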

\begin{proof}
Since $f \in E^\al(X,\R)$ a.s., it is a.s. $C^\beta(X,\R)$ for every $\beta <\al$, and so a.s. bounded by compactness of $X$. By theorem \ref{thm:genericityTotalHomology} and the previous remark, it follows that for every $k \in \N$, $\Dgm_k(f) \in \mathcal{D}_{\frac{d}{\beta}} \cap \mathcal{D}_\infty$, proving the first result. 

Next, we remark that if $\expect{\norm{f}_{C^{\beta}(X,\R)}^q}$ is finite so is the $p$th moment of the norm for every $1\leq p \leq q$ by a simple application of Jensen's inequality. To show the result, it suffices to show that for such $p$,
\be
\expect{\Pers_p^p(f)} < \infty \,.
\ee
But using the same trick as in the proof of theorem \ref{thm:WassersteinpStabilityLLC}, applying Tonelli's theorem, for some constant $C$ (which is bounded above by the LLC constant of $X$), we have
\begin{align*}
\Pers_p^p(f) &= p \int_0^\infty \veps^{p-1} \expect{N^\veps_f}  \;d\veps \\
&\leq (2C \norm{f}_{C^\beta})^p\beta p \int_0^{\frac{\text{diam}(X)}{(2C)^{1/\beta}}} \veps^{p\beta -1} \mathcal{N}_X(\veps) \; d\veps \,.
\end{align*}
The integral on $[0,1]$ is finite as soon as $p>\frac{d}{\al}$ since the dimension of $X$ is $d$. Taking the expectation of both sides,
\be
\expect{\Pers_p^p(f)} \leq  \tilde{C}_{X,p,\beta} \expect{\norm{f}_{C^\beta}^p} \,,
\ee
which is finite as soon as the moments of the $C^\beta$-norm of $f$ are finite, exactly as supposed in the proposition. Finally, the \textit{a fortiori} inclusion in $\mathcal{D}_r$ is a consequence of the Wasserstein interpolation theorem (proposition \ref{prop:interpolationOT}).
\end{proof}

\subsection{Consequences of stability}
Equipped with some of the elementary facts from optimal transport theory, we may come back to persistence measures and diagrams. The main goal of this section will be to prove the following theorem. 
\begin{theorem}[Stability of random fields under Wasserstein perturbations]
\label{thm:randomfielddiagStability}
Let $f$ and $g$ be two $\R$-valued a.s. $E^\al$ stochastic processes on a $d$ dimensional compact Riemannian manifold $X$ on a probability space $(\Omega,\mathcal{F},\PP)$. Then, for any $k \in \N$ and any $1 \leq p \leq \infty$,
\be
W_{p,d_\infty}((\Dgm_k \circ f)_\sharp \PP, (\Dgm_k \circ g)_\sharp \PP) \leq W_{p,L^\infty}(f_\sharp \PP, g_\sharp \PP) \,.
\ee 
Moreover, if the supports of $f_\sharp \PP$ and $g_\sharp \PP$ are compact in $E^\al(X,\R)$, then 
\be
d_\infty(\expect{\Dgm_k(f)},\expect{\Dgm_k(g)}) \leq  W_{\infty,d_\infty}((\Dgm_k \circ f)_\sharp \PP, (\Dgm_k \circ g)_\sharp \PP)   \,,
\ee  
and for every $\frac{d}{\al}<p\leq q \leq \infty$, there exists a constant $C_{X,p,\eta}$ depending on the supports of $f_\sharp \PP$ and $g_\sharp \PP$ such  that 
\be
d_p(\expect{\Dgm_k(f)},\expect{\Dgm_k(g)}) \leq W_{q,d_p}((\Dgm_k \circ f)_\sharp\PP,(\Dgm_k \circ g)_\sharp\PP ) \leq C_{X,p,\eta} W_{q\eta,L^\infty}^\eta(f_\sharp \PP, g_\sharp \PP) \,.
\ee
where $\eta < 1- \frac{d}{\al p}$.
\end{theorem}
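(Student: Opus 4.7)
The argument combines bottleneck stability, the Hölder--Wasserstein inequality of Proposition \ref{prop:WassersteinHolder}, and the $d_p$-stability of Theorem \ref{thm:WassersteinpStabilityLLC}, glued together by a Jensen-type convexity argument to move from random diagrams to expected ones.

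The first displayed inequality is immediate: bottleneck stability says $\Dgm_k : (C^0(X,\R), \norm{\cdot}_\infty) \to (\mathcal{D}, d_\infty)$ is $1$-Lipschitz. Writing $(\Dgm_k \circ f)_\sharp \PP = (\Dgm_k)_\sharp (f_\sharp \PP)$, Proposition \ref{prop:WassersteinHolder} applied with $\al = \Lambda = 1$ yields the bound for every $1 \leq p \leq \infty$.

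For the $W_\infty$-comparison between expected diagrams, I would fix a $W_\infty$-optimal coupling $(F, G)$ of $(\Dgm_k \circ f)_\sharp \PP$ and $(\Dgm_k \circ g)_\sharp \PP$, existence of which follows from sequential compactness of the set of couplings exploited in Remark \ref{rmk:limitptoinfty}. Setting $r$ to be the $W_\infty$-value, this coupling satisfies $d_\infty(F, G) \leq r$ almost surely. The tubular-neighborhood characterization of Theorem \ref{thm:Winftydelta} translates this into $F(U) \leq G(U^r)$ and $G(U) \leq F(U^r)$ a.s.\ for every open $U \subset \XX$. Taking expectations and using the definition $\expect{\Dgm_k(f)}(B) = \expect{\Dgm_k(f)(B)}$ preserves both inequalities, whence $d_\infty(\expect{\Dgm_k(f)}, \expect{\Dgm_k(g)}) \leq r$.

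For the two-sided $d_p$-chain, the left inequality follows from the joint convexity of $d_p^p$ established earlier in the excerpt together with Jensen's inequality on the space of random measures: for any coupling of $\Dgm_k(f)$ and $\Dgm_k(g)$,
\[
d_p^p(\expect{\Dgm_k(f)}, \expect{\Dgm_k(g)}) \leq \expect{d_p^p(\Dgm_k(f), \Dgm_k(g))},
\]
and taking the infimum over couplings followed by the monotonicity $W_{p, d_p} \leq W_{q, d_p}$ for $p \leq q$ gives the stated bound. For the rightmost inequality, compactness of $\supp(f_\sharp \PP) \cup \supp(g_\sharp \PP)$ in $E^\al(X, \R)$ implies that for every $\beta < \al$ both supports lie in a common bounded ball $C^\beta_\Lambda(X, \R)$ with $\Lambda$ depending on $\beta$. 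Theorem \ref{thm:WassersteinpStabilityLLC} then asserts that on such a support $\Dgm_k : L^\infty \to d_p$ is $(1 - q'/p)$-Hölder for any $q' > d/\beta$, and letting $\beta \uparrow \al$ admits any Hölder exponent $\eta < 1 - d/(\al p)$. Plugging this Hölder control into Proposition \ref{prop:WassersteinHolder} yields the claimed bound.

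The main obstacle is this last step: compactness in $E^\al$ is strictly weaker than compactness in any single $C^\al$, so one cannot apply Theorem \ref{thm:WassersteinpStabilityLLC} directly at the exponent $\al$; instead one must exploit that the stability exponent depends continuously on the Hölder class used and pass to the limit $\beta \uparrow \al$ to attain the sharp threshold $\eta < 1 - d/(\al p)$. A secondary technical point is the existence of a $W_\infty$-optimal coupling used above, which hinges on the compactness of transport plans as recalled in Remark \ref{rmk:limitptoinfty}.
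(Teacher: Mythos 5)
Your proposal is correct and, for the first and third displayed inequalities, follows essentially the paper's own route: the first display is the pushforward of bottleneck stability (the paper phrases it as ``change of variables plus bottleneck stability'', you phrase it via Proposition \ref{prop:WassersteinHolder} with exponent and constant $1$); the chain $d_p(\expect{\Dgm_k(f)},\expect{\Dgm_k(g)})\leq W_{q,d_p}$ is obtained in both cases from convexity of $d_p^p$ plus Jensen and the monotonicity $W_{p,d_p}\leq W_{q,d_p}$; and the rightmost bound is in both cases the H\"older continuity of $\Dgm_k$ from $(C^\beta_\Lambda,\norm{\cdot}_\infty)$ to $(\mathcal{D},d_p)$ given by Theorem \ref{thm:WassersteinpStabilityLLC} (doubling case, which a compact Riemannian manifold satisfies), with $\Lambda$ the supremum of $C^\beta$-norms over the compact supports and $\beta\uparrow\al$ to reach every $\eta<1-\frac{d}{\al p}$ --- your use of Proposition \ref{prop:WassersteinHolder} here is just a packaged form of the paper's explicit integral estimate against an optimal plan. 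The one genuinely different step is the $W_\infty$ inequality for expected diagrams: the paper deduces it by letting $p\to\infty$ in the finite-$p$ Jensen bound, invoking Remark \ref{rmk:limitptoinfty} and uniform boundedness, whereas you argue directly through the tubular-neighborhood characterization of Theorem \ref{thm:Winftydelta}, transporting the a.s.\ inequalities $F(U)\leq G(U^r)$, $G(U)\leq F(U^r)$ through the expectation. Your route is more elementary and avoids the $d_p\to d_\infty$ limit, but it leans on the existence of a $W_\infty$-optimal coupling of the diagram laws, which the paper never establishes in this setting (Remark \ref{rmk:limitptoinfty} concerns plans between two fixed persistence measures, not couplings of probability measures on diagram space); this is easily repaired by working with near-optimal couplings of essential supremum at most $W_{\infty,d_\infty}+\veps$ and letting $\veps\to 0$, so it is a presentational caveat rather than a gap. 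Your closing observation that compactness in $E^\al$ only gives bounded $C^\beta$-norms for each $\beta<\al$, forcing the passage $\beta\uparrow\al$, is exactly how the paper handles the sharp threshold as well.
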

\begin{remark}
The proof of this theorem uses some of the techniques from \cite[Lemma 15]{Chazal_2014}. It differs from this result, as it concerns the $d_p$-stability as opposed to simply $d_\infty$-stability, but also because the statement of theorem \ref{thm:randomfielddiagStability} gives a bound on the distance between expected diagrams, as opposed to a linear functional of the latter. However, necessary and sufficient conditions for the continuity of linear functionals of $\expect{\Dgm(f)} \in (\DD_p, d_p)$ has been studied by Divol and Lacombe in \cite{Divol_2019}. 
\end{remark}
\begin{proof}[Proof of theorem \ref{thm:randomfielddiagStability}]
The first inequality is a simple consequence of a change of variables and an application of the bottleneck stability theorem. Next, notice that if $f_\sharp \PP$ and $g_\sharp \PP$ have compact support in $E^\al$, then $f$ and $g$ are almost surely uniformly bounded functions, so $\expect{\Dgm(f)}$ and $\expect{\Dgm(g)}$ are both in $\mathcal{D}_\infty$. 

Notice that,
\be
\expect{\Dgm(f)} = \int_{E^\al} \Dgm(h) \;df_\sharp\PP(h) = \int_{(E^\al)^2} \Dgm(h) \; d\pi(h,\tilde{h}) \,,
\ee
for any $\pi \in \Gamma(f_\sharp \PP, g_\sharp \PP)$ and an analogous equality holds for $\expect{\Dgm(g)}$. Since $d_p^p$ is convex, applying Jensen's inequality 
\begin{align*}
d_p^p(\expect{\Dgm(f)},\expect{\Dgm(g)}) &= d_p^p\left(\int_{(E^\al)^2} \Dgm(h) \; d\pi(h,\tilde{h}), \int_{(E^\al)^2} \Dgm(\tilde{h}) \; d\pi(h,\tilde{h})\right) \\
&\leq \int_{(E^\al)^2} d_p^p(\Dgm(h),\Dgm(\tilde{h})) \; d\pi(h,\tilde{h}) \\
&= \int_{(\Dgm(E^\al))^2} d_p^p(x,y) \;d\!\Dgm^{\tensor 2}_\sharp \pi(x,y) \,.
\end{align*}
Taking the infimum over every $\pi$ of this inequality and taking the $p$th root,
\begin{align*}
d_p(\expect{\Dgm(f)},\expect{\Dgm(g)}) \leq W_{p,d_p}((\Dgm\circ f)_\sharp \PP, (\Dgm\circ g)_\sharp \PP) \,.
\end{align*}
The result for $p=\infty$ is obtained by taking the limit $p\to \infty$, justified by remark \ref{rmk:limitptoinfty} and the fact that the stochastic processes and their distributions in $E^\al$ are uniformly bounded. Keeping the same notation, if $\pi$ is an optimal transport for $d_p$, $\pi$ must necessarily be itself of compact support within $(E^\al)^2$. In particular, for any $\beta <\al$, if $K_f$ and $K_g$ denote the supports of $f_\sharp \PP$ and $g_\sharp\PP$, there exists a finite constant 
\be
\Lambda := \left(\sup_{\vp \in K_f} \norm{\vp}_{C^\beta}\right) \vee \left(\sup_{\psi \in K_g} \norm{\psi}_{C^\beta}\right) \,,
\ee
such that, applying the Wasserstein $p$ stability theorem for all $p > k> \frac{d}{\beta}$,
\be
\int_{(E^\al)^2} d_p^p(\Dgm(h),\Dgm(\tilde{h})) \; d\pi(h,\tilde{h}) \leq C_{X,\Lambda,\beta} \int_{(E^\al)^2}\norm{h-\tilde{h}}_\infty^{p-k} d\pi(h,\tilde{h}) \,,
\ee
yielding the result of the theorem for the values of $\eta$ prescribed. That the same inequalities hold for all $p\leq q \leq \infty$ is a consequence of Jensen's inequality.
\end{proof}

\begin{proposition}[Control of $W_{p,L^\infty}$]
\label{prop:ControlofWpLinfty}
Let $f$ and $g$ be two $\R$-valued a.s. $E^\al$ stochastic processes on a $d$ dimensional compact Riemannian manifold $X$ on a probability space $(\Omega,\mathcal{F},\PP)$. Then, the following inequality holds
\be
W_{p,L^\infty}(f_\sharp\PP, g_\sharp \PP) \leq \norm{f-g}_{L^p(\Omega,L^\infty(X,\R))}
\ee
\end{proposition}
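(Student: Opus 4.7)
The plan is to exhibit an explicit admissible coupling between $f_\sharp \PP$ and $g_\sharp \PP$, namely the joint-law coupling induced by the pair $(f,g):\Omega \to L^\infty(X,\R) \times L^\infty(X,\R)$, and show that its Wasserstein cost equals $\norm{f-g}_{L^p(\Omega,L^\infty(X,\R))}^p$. Since the infimum in the definition of $W_{p,L^\infty}$ is taken over all admissible couplings, this will yield the bound.

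More precisely, I would first set $\pi := (f,g)_\sharp \PP$, a Borel probability measure on $L^\infty(X,\R)\times L^\infty(X,\R)$. The marginals of $\pi$ are precisely $f_\sharp \PP$ and $g_\sharp \PP$, so $\pi \in \Gamma(f_\sharp\PP, g_\sharp\PP)$ in the sense of standard optimal transport between probability measures (table \ref{table:OTvsOPT}). Consequently,
\begin{align*}
W_{p,L^\infty}^p(f_\sharp\PP, g_\sharp\PP)
&\leq \int_{L^\infty(X,\R)\times L^\infty(X,\R)} \norm{u-v}_{L^\infty(X,\R)}^p \, d\pi(u,v).
\end{align*}

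The second step is a simple change of variables: pushing back along $(f,g)$, the right-hand side becomes
\begin{align*}
\int_{\Omega} \norm{f(\omega)-g(\omega)}_{L^\infty(X,\R)}^p \, d\PP(\omega) = \norm{f-g}_{L^p(\Omega,L^\infty(X,\R))}^p.
\end{align*}
Taking $p$-th roots concludes the proof.

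The only mild technical point is measurability: one needs $(f,g)$ to be jointly measurable as an $L^\infty(X,\R)\times L^\infty(X,\R)$-valued random element, which follows from the separate measurability of $f$ and $g$ (as stochastic processes taking values in a Polish subspace of $C^0(X,\R) \subset L^\infty(X,\R)$, by the almost sure $E^\al$ regularity assumption), and that the norm $\norm{u-v}_{L^\infty}$ is a continuous (hence Borel) function on the product space. For $p=\infty$, the same coupling argument applies using the $\norm{\cdot}_{L^\infty(\pi)}$ characterization from theorem \ref{thm:Winftydelta}. No real obstacle arises; this is essentially a one-line coupling argument, and I would expect the writeup to be only a few lines long.
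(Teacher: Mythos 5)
Your proof is correct and follows exactly the paper's argument: both use the joint-law coupling $(f,g)_\sharp\PP$ as an admissible transport plan and bound $W_{p,L^\infty}^p$ by its cost, which a change of variables identifies with $\norm{f-g}_{L^p(\Omega,L^\infty(X,\R))}^p$. Nothing further is needed.
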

\begin{proof}
The map $F : \Omega \to E^\al(X,\R)^2$ which sends $\omega \mapsto (f(\omega),g(\omega))$ induces a transport map $F_\sharp\PP \in \Gamma(f_\sharp\PP,g_\sharp\PP)$ and 
\begin{align*}
W_{p,L^\infty}^p(f_\sharp\PP,g_\sharp \PP) &\leq \int_{E^\al(X,\R)^2 } \norm{h-k}^p dF_\sharp \PP(h,k) = \int_\Omega \norm{f(\omega)-g(\omega)}_\infty^p d\PP(\omega)\\
&=\norm{f-g}_{L^p(\Omega,L^\infty(X,\R))}^p \,,
\end{align*}
which finishes the proof.
\end{proof}
\begin{remark}
Proposition \ref{prop:ControlofWpLinfty} yields an easy way to estimate the value of Wasserstein distances between stochastic processes. Using the results of \cite{Perez_Pr_2020} and other results on rates of convergence of random processes (which could be obtained by using results such as those of Kahane \cite{Kahane}), this instantly gives estimates for Wasserstein distances between distributions for a panoply of processes.
\end{remark}
\begin{corollary}[A remark on discretization]
Keeping the same notation, fix a triangulation $P$ of $X$ whose $0$-skeleton has $n$ points and such that the $0$-skeleton of $P$ is an $\veps$-net of $X$ (this constrains $n \geq \mathcal{N}_X(\veps)$) and define a new process $\hat{f}$ which is equal to $f$ on the $0$-skeleton of $P$ and linearly interpolate in between. Then,
\be
W_{p,L^\infty}(f_\sharp \PP, \hat{f}_\sharp \PP) \leq \expect{\norm{f}_{C^\beta}^p} \veps^{\beta p} \,.
\ee
If $p=\infty$ and that $\norm{f}_{C^\beta}$ is uniformly bounded by $L$, then
\be
W_{\infty,L^\infty}(f_\sharp\PP,\hat{f}_\sharp \PP) \leq L\veps^\al
\ee
and theorem \ref{thm:randomfielddiagStability} applies.
\end{corollary}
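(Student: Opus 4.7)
The strategy is to reduce the claim to a pointwise H\"older estimate via Proposition \ref{prop:ControlofWpLinfty}. First I would apply that proposition to obtain
\be
W_{p,L^\infty}(f_\sharp \PP, \hat{f}_\sharp \PP) \leq \norm{f - \hat{f}}_{L^p(\Omega,L^\infty(X,\R))}\,,
\ee
so that it suffices to control $\norm{f(\omega)- \hat{f}(\omega)}_\infty$ $\PP$-almost surely and then integrate against $\PP$.

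Next I would fix $\omega \in \Omega$ and bound this $L^\infty$-norm pointwise. Any $x \in X$ lies in some simplex $\sigma$ of $P$ with vertices $v_0, \ldots, v_k$ belonging to the $0$-skeleton, and by construction $\hat{f}(x) = \sum_{i=0}^k \lambda_i(x) f(v_i)$, where the $\lambda_i(x) \geq 0$ are the barycentric coordinates of $x$ and $\sum_i \lambda_i(x) = 1$. Writing $f(x) = \sum_i \lambda_i(x) f(x)$ and invoking the H\"older regularity of $f(\omega)$ gives
\be
\abs{f(x)-\hat{f}(x)} \leq \sum_i \lambda_i(x)\abs{f(x)-f(v_i)} \leq \norm{f(\omega)}_{C^\beta}\, \mathrm{diam}(\sigma)^\beta \,.
\ee
Since the $0$-skeleton is an $\veps$-net of $X$, for a sufficiently regular triangulation the diameter of each simplex is bounded above by $C\veps$ for some universal constant $C$ (depending only on the shape regularity of the triangulation and on $X$). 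Taking the supremum over $x \in X$ yields $\norm{f(\omega)- \hat{f}(\omega)}_\infty \leq C^\beta \norm{f(\omega)}_{C^\beta}\, \veps^\beta$.

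Finally I would raise this estimate to the $p$th power and integrate over $\Omega$, which gives the first inequality of the corollary (after absorbing $C^\beta$ into the constant). The case $p=\infty$ proceeds identically, replacing integration by the essential supremum: under the a.s. bound $\norm{f}_{C^\beta} \leq L$ one gets $\norm{f-\hat{f}}_{L^\infty(\Omega,L^\infty(X,\R))} \leq C^\beta L \veps^\beta$, so the stated display (with $\beta$ in place of $\al$ in the exponent) follows, after which the last line of theorem \ref{thm:randomfielddiagStability} immediately controls $d_\infty(\expect{\Dgm_k(f)},\expect{\Dgm_k(\hat{f})})$.

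The only substantive step is geometric: the passage from the $\veps$-net hypothesis on the $0$-skeleton to a uniform $O(\veps)$ bound on simplex diameters. This is not a formal consequence of being an $\veps$-net alone, but it is automatic for well-shaped (e.g.\ quasi-uniform) triangulations of a compact Riemannian manifold, and is implicit in the statement of the corollary.
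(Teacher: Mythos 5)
Your proposal is correct and follows essentially the same route as the paper: apply Proposition \ref{prop:ControlofWpLinfty} to reduce to $\norm{f-\hat{f}}_{L^p(\Omega,L^\infty)}$, bound $\norm{f(\omega)-\hat{f}(\omega)}_\infty$ pointwise by $\norm{f(\omega)}_{C^\beta}\,\veps^\beta$ using the H\"older regularity and the mesh of the triangulation, then integrate and let $p\to\infty$ under the uniform bound $L$. Your explicit barycentric-coordinate argument and the caveat that the $\veps$-net hypothesis must be supplemented by a uniform $O(\veps)$ bound on simplex diameters (plus letting $\beta \uparrow \al$ to reach the exponent $\al$) merely spell out details the paper's proof leaves implicit, rather than constituting a different method.
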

\begin{proof}
 Clearly, $\hat{f} : \Omega \to \Lip_{\Lambda_\veps}(X,\R)$ of law $\hat{f}_\sharp\PP$. By proposition \ref{prop:ControlofWpLinfty}, for any $\beta < \al$, 
\begin{align*}
W_{p,L^\infty}^p(f_\sharp\PP,\hat{f}_\sharp \PP) &\leq \expect{\norm{f-\hat{f}}_\infty^p} \leq \expect{\norm{f}_{C^\beta}^p} \veps^{\beta p} \,.
\end{align*}
Taking $p \to \infty$, provided that the distribution of $\norm{f}_{C^\beta}$ has bounded support, we can bound the support of this distribution by $L$, we get $W_{\infty,L^\infty}(f_\sharp\PP,\hat{f}_\sharp \PP) \leq L\veps^\al$. In particular, the expected diagrams differ from less than $L\veps^\al$ in $d_\infty$.
\end{proof}

\begin{remark}
The topology on the measures on $C^0(X,\R)$ defined by Wasserstein distances may be too weak. Indeed, note that $W_{p,\Linfty}$-balls around any measure $\mu$ supported on some $E^\al(X,\R)$ include probability measures whose support intersects sets of $C^0(X,\R)$ whose number of small bars grows faster than any polynomial (or indeed any computable function!). To see why, it suffices to exhibit an example of such a function (let us denote it $h$), and notice that if a stochastic process $f$ has law $\mu$, if $\xi$ denotes a standard gaussian random variable, then $f + \veps \xi h$ is (up to rendering $f$ locally constant on some small ball) an arbitrarily small $\Linfty$-perturbation of $f$ whose number of small bars grows arbitrarily fast. In particular, this perturbation is not in any $\mathcal{D}_p$ for any $p$, but the law of this perturbed process is included within a $W_{p,\Linfty}$-ball of arbitrarily small radius. 

However, by changing topology to that of a Sobolev space which injects itself onto some $C^\al(X,\R)$, we can avoid this problem. With this change in topology, it might be superfluous to require that the processes lie in $E^\al(X,\R)$, as it might follow from an argument ressembling that of the proof of the Kolmogorov-Chentsov theorem (theorem \ref{thm:KolmogorovChentsov}). 
\end{remark}
% The previous remark shows that it is impossible that expected Betti curves or Euler curves be stable under $W_{p,\Linfty}$-perturbations of the underlying distributions of stochastic processes. This can also be shown 

\subsection{Establishing classes of regularity}
A sufficient and easily verifiable condition for a stochastic process to be almost surely $E^\al$ is given by the Kolmogorov-Chentsov theorem.
\begin{theorem}[Kolmogorov-Chentsov Theorem for compact manifolds, \cite{Andreev_2014,Aubin_1998}]
\label{thm:KolmogorovChentsov}
Let $(\Omega, \mathcal{F},\PP)$ be a probability space, $\mathcal{B}$ be a Banach space, $X$ be a $d$-dimensional compact Riemannian manifold (without boundary) with distance $d_X$ and $f:\Omega \times X \to \mathcal{B}$ be a $\mathcal{B}$-valued separable stochastic process. Suppose there exists constants $C>0$, $\veps>0$ and $\delta>1$ such that for all $x,y \in X$,
\be
\expect{\norm{f(x)-f(y)}_\mathcal{B}^\delta} \leq C d_X(x,y)^{d+\veps} \,,
\ee 
then there exists a modification of $f$ such that for all $\al \in [0,\frac{\veps}{\delta}[$, $f$ is almost surely $\al$-H\"older continuous.
\end{theorem}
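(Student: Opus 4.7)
The plan is to reduce the statement to the classical Kolmogorov-Chentsov theorem on $\R^d$ via a finite atlas and then glue the resulting local modifications into a single global one. Since $X$ is a compact Riemannian manifold without boundary, one can cover $X$ by finitely many normal coordinate charts $\varphi_i : V_i \to B_{\R^d}(0,r_i)$, $i = 1,\dots,N$, and choose slightly smaller compact sets $K_i \subset V_i$ whose interiors still cover $X$. On each $V_i$, by standard comparison of the Riemannian and Euclidean metrics in normal coordinates, there exist constants $0<c_i\leq C_i<\infty$ such that
\[
c_i |u-v|_{\R^d} \leq d_X(\varphi_i^{-1}(u),\varphi_i^{-1}(v)) \leq C_i |u-v|_{\R^d}
\qquad \forall\, u,v\in \varphi_i(K_i).
\]

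First I would transfer the hypothesis to $\R^d$: defining $f_i := f\circ \varphi_i^{-1}$ on $\varphi_i(V_i)$, the moment bound becomes
\[
\expect{\norm{f_i(u)-f_i(v)}_{\mathcal{B}}^{\delta}} \leq C\, C_i^{d+\veps}\, |u-v|_{\R^d}^{d+\veps}.
\]
Then I would invoke the classical Kolmogorov-Chentsov theorem for Banach-valued separable processes on a ball of $\R^d$ (whose proof via dyadic decomposition, Chebyshev's inequality and Borel-Cantelli goes through verbatim for $\mathcal{B}$-norms) to obtain, on each chart, a modification $\tilde f_i$ of $f_i$ which is almost surely $\al$-H\"older on $\varphi_i(K_i)$ for every $\al \in [0,\veps/\delta)$ simultaneously. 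Pulling back, $\hat f_i := \tilde f_i \circ \varphi_i$ is a modification of $f|_{V_i}$ that is a.s.\ $\al$-H\"older with respect to $d_X$ on $K_i$ for all such $\al$, by the bi-Lipschitz equivalence above.

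The key step will be gluing these modifications into a single process $\tilde f$ on $X$. For each fixed $x \in K_i \cap K_j$ one has $\PP(\hat f_i(x) = f(x) = \hat f_j(x)) = 1$, but this alone is insufficient because the null sets depend on $x$. Here I would use the separability hypothesis: fix a countable dense set $D \subset X$; then the event $\Omega_0 := \bigcap_{x \in D}\bigcap_{i,j}\{\hat f_i(x) = \hat f_j(x)\}$ has full measure, and on $\Omega_0$, by the a.s.\ continuity of each $\hat f_i$, the equalities $\hat f_i = \hat f_j$ extend from $D \cap K_i \cap K_j$ to all of $K_i \cap K_j$. On $\Omega_0$ we can therefore define $\tilde f(x) := \hat f_i(x)$ for any $i$ with $x \in K_i$, and set $\tilde f \equiv 0$ off $\Omega_0$; the result is a well-defined modification of $f$.

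It remains to upgrade local H\"older continuity on the $K_i$ to global H\"older continuity on $X$. By compactness, there exists $\rho>0$ (a Lebesgue number for the open cover $\{\mathrm{int}\,K_i\}$) such that every pair $x,y\in X$ with $d_X(x,y)<\rho$ lies in some common $K_i$, giving $\|\tilde f(x)-\tilde f(y)\|_{\mathcal{B}}\leq L_i d_X(x,y)^{\al}$; for pairs with $d_X(x,y)\geq \rho$ we bound the left-hand side by $2\sup_X \|\tilde f\|_{\mathcal{B}} \cdot (d_X(x,y)/\rho)^{\al}\cdot \mathrm{diam}(X)^{-\al}$, where the sup is a.s.\ finite because $\tilde f$ is continuous on the compact $X$. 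Taking the maximum of the resulting constants yields a single a.s.\ finite global $\al$-H\"older constant, finishing the proof. The main obstacle is the bookkeeping around null sets in the patching step; everything else is a mechanical transfer of the Euclidean result through charts.
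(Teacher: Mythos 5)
Your proof is correct in outline but follows a genuinely different route from the paper. The paper avoids charts and chaining altogether: it bounds $\expect{\int_X\int_X \norm{f(x)-f(y)}_{\mathcal{B}}^{\delta}\, d_X(x,y)^{-d-\gamma\delta}\,dx\,dy}$ by Tonelli and the moment hypothesis, concludes that $f\in W^{\gamma,\delta}(X,\mathcal{B})$ almost surely for every $\gamma<(d+\veps)/\delta$, and then invokes the Sobolev embedding $W^{\gamma,\delta}(X,\mathcal{B})\hookrightarrow C^{\al}(X,\mathcal{B})$ for $\al<\gamma-\frac{d}{\delta}$, building the modification in one stroke via ball averages (Lebesgue-point regularization); there is no localization and no gluing, and compactness of $X$ enters only through the validity of the embedding. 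Your argument instead transfers the classical dyadic-chaining Kolmogorov--Chentsov theorem through finitely many bi-Lipschitz normal charts and then patches; it is more elementary (no fractional Sobolev spaces) and makes the origin of the exponent $\veps/\delta$ transparent, at the cost of exactly the null-set bookkeeping you flag.

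Two small repairs are needed in that bookkeeping. First, a countable dense subset $D$ of $X$ need not meet, let alone be dense in, $K_i\cap K_j$ (these intersections can be nowhere dense, e.g.\ when two of your compact sets share only boundary points), so the extension of $\hat f_i=\hat f_j$ from $D\cap K_i\cap K_j$ to all of $K_i\cap K_j$ can fail as written; choose instead a countable dense subset $D_{ij}$ of each compact set $K_i\cap K_j$ and intersect over the finitely many pairs $(i,j)$ --- note that what you actually use here is the a.s.\ continuity of the local modifications, not the separability of $f$. Second, in the far-apart case the correct global constant is $2\sup_X\norm{\tilde f}_{\mathcal{B}}\,\rho^{-\al}$; your extra factor $\mathrm{diam}(X)^{-\al}$ makes the claimed bound false whenever $\mathrm{diam}(X)>1$. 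Both points are cosmetic and do not affect the soundness of the approach.
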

The proof uses the same idea of \cite{Andreev_2014} to use the Sobolev embedding theorem. For compact Riemannian manifolds, the required Sobolev embedding theorem is given by \cite[Theorem 2.20]{Aubin_1998} (in fact, within \cite{Aubin_1998}, one can actually find Sobolev embedding theorems valid for wider classes of manifolds). Let us give a sketch of the proof.
\begin{proof}[Sketch of proof of theorem \ref{thm:KolmogorovChentsov}]
First, by virtue of Markov's inequality, the estimation on the moments above entails that the process is continuous in probability. We may therefore assume that, up to taking a modification of $f$, the process $f$ is measurable on $\Omega \times X$. Fix $\gamma$ a real number, then Tonelli's theorem and the estimation of the moments above implies that
\begin{align*}
\expect{\int_X \int_X \frac{\norm{f(x)-f(y)}_{\mathcal{B}}^\delta}{d_X(x,y)^{d+\gamma\delta}} \; dx \, dy} &= \int_X \int_X \frac{\expect{\norm{f(x)-f(y)}_{\mathcal{B}}^\delta}}{d_X(x,y)^{d+\gamma\delta}} \; dx \, dy\\
&\leq C\int_X \int_X d_X(x,y)^{\veps - \gamma\delta}\; dx \, dy 
\end{align*}
which is finite as soon as $\gamma < \frac{d+\veps}{\delta}$. Notice that the bounded quantity is nothing other than the norm of $f$ in $L^\delta(\Omega,W^{\gamma,\delta}(X,\mathcal{B}))$, so that almost surely, $f_\omega \in W^{\gamma,\delta}(X,\mathcal{B})$. There is a Sobolev injection of $W^{\gamma,\delta}(X,\mathcal{B}) \xhookrightarrow{} C^\al(X,\mathcal{B})$ for all $\al < \gamma - \frac{d}{\delta}$, so for every $\al < \frac{\veps}{\delta}$, there is a measurable set $\Omega_0 \subset \Omega$ of probability measure $1$ on which for every $\omega \in \Omega_0$,  $f_\omega$ is $\al$-H\"older almost everywhere on $X$. The corresponding modification can be obtained by making the trajectories continuous everywhere. Since the process $f$ is measurable on $\Omega \times X$, we can set
\be
g_{\omega}(h,x) := \frac{1}{\Vol(B(x,h))} \int_{B(x,h)} f_\omega(y) \;dy \,,
\ee
and consider the set 
\be
B = \{(\omega,x) \in \Omega \times X \,\vert \, (g_\omega(h,x))_h \text{ converges as } h \to 0\}
\ee
and set the continuous modification of $f$ to be
\be
g_\omega(x):= \begin{cases} \lim_{h \to 0} g_\omega(h,x) & (\omega,x) \in B \\ 0 & \text{else} \end{cases} \,.
\ee
Finally, it is easy to check this function is indeed $\al$-H\"older everywhere on $\Omega_0$ and to check that $\PP(g(x)=f(x))=1$ almost everywhere on $X$.
\end{proof}
\begin{remark}
If $\mathcal{B} = \R$, the same idea works (as shown in \cite{Andreev_2014}) to prove results on the existence of modifications of processes such that the modification is almost surely of class $C^k$. 
\end{remark}
Provided that we have control over all moments of $\norm{f(x)-f(y)}$, the Kolmogorov-Chentsov theorem constrains the regularity of the process to live within some family
\be
\bigcap_{0 \leq \al < \al^*} C^\al(X,\R)
\ee 
for some $\al^*$. As an immediate corollary,
\begin{corollary}
With the same hypotheses and notation of theorem \ref{thm:KolmogorovChentsov} where now $\mathcal{B}=\R$, denoting $\al^*:=\sup_{\veps,\delta} \frac{\veps}{\delta}$, almost surely,
\be
\Lag_{Tot}(f) \leq \frac{d}{\al^*} \,.
\ee
\end{corollary}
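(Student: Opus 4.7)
The plan is to combine the Kolmogorov--Chentsov modification of $f$ with Theorem \ref{thm:genericityTotalHomology} applied in every degree of homology. The key observation is that $\al^* = \sup_{\veps,\delta} \veps/\delta$ is exactly the supremum of H\"older exponents for which Kolmogorov--Chentsov produces a modification, so $f$ lies in $E^{\al^*}(X,\R)$ almost surely.

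First, I would pick a sequence $(\al_n)_{n \in \N}$ with $\al_n \nearrow \al^*$ and $\al_n < \al^*$. For each $n$, Theorem \ref{thm:KolmogorovChentsov} provides a full-measure event $\Omega_n \subset \Omega$ on which (the chosen modification of) $f$ is $\al_n$-H\"older. Setting $\Omega_\infty := \bigcap_n \Omega_n$, the countable intersection is still of full measure, and on $\Omega_\infty$ the sample $f_\omega$ belongs to $C^{\al_n}(X,\R)$ for every $n$, hence to $E^{\al^*}(X,\R)$.

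Next, fix $\omega \in \Omega_\infty$. Since $X$ is a compact Riemannian manifold, its Ricci curvature is bounded below and its convexity radius is strictly positive, so by the remark following Theorem \ref{thm:genericityTotalHomology}, $X$ is a compact geodesic doubling space with geodesically convex small balls. Moreover, $H_k(X) = 0$ for $k > d$ on a $d$-manifold, so only finitely many degrees contribute to $\Lag_{Tot}$. Applying Theorem \ref{thm:genericityTotalHomology} degree-by-degree to $f_\omega \in C^{\al_n}(X,\R)$ yields
\begin{equation*}
\Lag_k(f_\omega) \leq \frac{d}{\al_n} \qquad \text{for every } 0 \leq k \leq d \text{ and every } n \in \N.
\end{equation*}
Since the collection of degrees is finite, this bound passes to the total persistence index: $\Lag_{Tot}(f_\omega) \leq d/\al_n$ for every $n$.

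Finally, letting $n \to \infty$ (so $\al_n \to \al^*$) we obtain $\Lag_{Tot}(f_\omega) \leq d/\al^*$ on the full-measure set $\Omega_\infty$, proving the corollary. There is no genuine obstacle here; the only point requiring mild care is the countable intersection argument to move from ``a.s. $\al_n$-H\"older for each fixed $n$'' to ``a.s. $\al_n$-H\"older for all $n$ simultaneously'', which is what lets us take a supremum in $n$ at the end. Had we restricted ourselves to distance functions in the style of Schweinhart's original definition, we would also need to verify that the constants in Theorem \ref{thm:genericityTotalHomology} depend only on the H\"older norm and not on finer properties of $f$, but this is built into the statement we invoke.
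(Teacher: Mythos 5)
Your proof is correct and takes essentially the same route the paper intends: the corollary is treated there as immediate because Kolmogorov--Chentsov places the (modified) process in $\bigcap_{0\leq \al < \al^*} C^{\al}(X,\R)$ almost surely, after which theorem \ref{thm:genericityTotalHomology} (whose hypotheses a compact Riemannian manifold satisfies) applied in each of the finitely many nonvanishing degrees gives $\Lag_{Tot}(f) \leq d/\al_n$ and one lets $\al_n \to \al^*$. The countable-intersection argument you make explicit is exactly the detail the paper leaves implicit, so nothing further is needed.
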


\section{Acknowledgements}
The author would like to thank Pierre Pansu and Claude Viterbo for helping with the redaction of the manuscript as well as their guidance. Many thanks are also owed to Shmuel Weinberger, Yuliy Baryshnikov and Jean-Fran{\c c}ois Le Gall and Nicolas Curien for the fruitful discussions without which some of this work would not have been possible. 

\bibliographystyle{abbrv}
\bibliography{PhDThesis}

\end{document}